\documentclass[12pt]{amsart}

\usepackage{amssymb,amscd}
\usepackage{amsbsy,amsmath,amsfonts,amsthm,extarrows,bm}
\usepackage[english]{babel}
\usepackage{graphicx}
\usepackage[all]{xy}
\usepackage[latin1]{inputenc}

\topmargin -1.5cm 
\headheight 2cm \headsep 1cm
\textwidth 14cm 
\oddsidemargin 0.5cm 
\evensidemargin 0.5cm 
\textheight 20 cm 
\footskip 1cm

\newtheorem{theorem}{Theorem}[section]
\newtheorem{lemma}[theorem]{Lemma}
\newtheorem{corollary}[theorem]{Corollary}
\newtheorem{proposition}[theorem]{Proposition}
\newtheorem{remark}[theorem]{Remark}

\newtheorem{Atheorem}{Theorem}[section]

\newtheorem{Acorollary}[Atheorem]{Corollary}

\newtheorem{Aproposition}[Atheorem]{Proposition}

\newenvironment{sproof}[1]
{\begin{proof}[#1]} {\end{proof}}

\newcommand{\ov}[1]{\overline{#1}}
\newcommand{\leftexp}[2]{{\vphantom{#2}}^{#1}{#2}}

\newcommand{\N}{\mathbb N}
\newcommand{\Z}{\mathbb Z}

\newcommand{\Q}{\mathbb Q}
\newcommand{\C}{\mathbb C}
\newcommand{\F}{\mathbb F}

\newcommand{\ovpsi}{\overline{\psi}}

\newcommand{\GL}{\textnormal{GL}}
\newcommand{\SL}{\textnormal{SL}}
\newcommand{\D}{\textnormal{D}}
\newcommand{\El}{\textnormal{E}}
\newcommand{\Ehat}{\hat{\textnormal{E}}}
\newcommand{\rk}{\textnormal{rk}}
\newcommand{\sr}{\textnormal{sr}}
\newcommand{\er}{\textnormal{er}}

\newcommand{\GE}{\textnormal{GE}}
\newcommand{\U}{\textnormal{Um}}
\newcommand{\E}{\textnormal{E}}

\newcommand{\Aut}{\textnormal{Aut}}
\newcommand{\IA}{\textnormal{IA}}
\newcommand{\Der}{\textnormal{Der}}
\newcommand{\V}{\operatorname{V}}
\newcommand{\Epi}{\operatorname{Epi}}
\newcommand{\Jac}{\mathcal{J}}

\newcommand{\nil}{\operatorname{nil}}
\newcommand{\Hom}{\operatorname{Hom}}

\newcommand{\SK}{\operatorname{SK}}
\newcommand{\dimK}{\dim_{\textnormal{Krull}}}
\newcommand{\pr}{\operatorname{pr}}
\newcommand{\T}{\operatorname{T}}

\newcommand{\Mat}{\textnormal{Mat}}

\newcommand{\br}[1]{\lbrack #1 \rbrack}
\newcommand{\Pres}[2]{\left\langle #1 \ \big\vert\  #2 \right\rangle}

\newcommand{\eb}{\mathbf{e}}
\newcommand{\gb}{\mathbf{g}}
\newcommand{\ob}{\overline{b}}
\newcommand{\ogb}{\overline{\mathbf{g}}}
\newcommand{\oP}{\overline{P}}
\newcommand{\rb}{\mathbf{r}}

\newcommand{\xb}{\mathbf{x}}

\newcommand{\mb}{\mathbf{m}}
\newcommand{\mD}{\mathcal{D}}

\newcommand{\mOD}{\mathcal{O}(\mathcal{D})}

\newcommand{\mODo}{\mathcal{O}(\mathcal{D} \setminus \{1\})}
\newcommand{\PZd}{\prod_{d \in \mD} \Z \lbrack \zeta_d \rbrack}

\newcommand{\Gab}{G_{ab}}

\newcommand{\ZX}{ \Z \lbrack  X^{\pm 1} \rbrack}
\newcommand{\ZkX}{ \Z_k \lbrack  X^{\pm 1} \rbrack}
\newcommand{\Zkl}{ \Z_k \lbrack  \Z_l \rbrack}
\newcommand{\Zwr}{ \Z_k \wr  \Z_l}
\newcommand{\Zrt}{ \Z_k \rtimes_{\alpha}  \Z_l}
\newcommand{\Zd}{ \Z \lbrack \zeta_d \rbrack}

\newcommand{\ann}{\operatorname{ann}}

\newcommand{\ZC}{ \Z \lbrack C \rbrack}
\newcommand{\Zl}{ \Z \lbrack 1/l  \rbrack}

\newcommand{\ia}{\mathfrak{a}}

\newcommand{\im}{\mathfrak{m}}
\newcommand{\tsys}{\mathfrak{t}}
\newcommand{\nielsen}{\mathfrak{n}}
\newcommand{\Ni}{\mathcal{N}}
\newcommand{\cC}{\vert C \vert}
\newcommand{\cAC}{\vert A(C) \vert}

\title{Generators of split extensions of Abelian groups by cyclic groups}
\address{EPFL ENT CBS BBP/HBP. Campus Biotech. B1 Building, Chemin des mines, 9\\Geneva 1202, Switzerland}
\email{luc.guyot@epfl.ch}
\author{Luc Guyot}
\date{\today}
\thanks{The author thanks the Mathematics Institute of G\"oettingen and the Max Planck Institute for Mathematics in Bonn for the excellent conditions provided for his stay at these institutions, during which the paper was written.}
\keywords{Nielsen equivalence; T-systems; Abelian-by-cyclic groups; lamplighter groups; Baumslag-Solitar groups; metacyclic groups; Laurent polynomials; generalized Euclidean rings; quasi-Euclidean rings; special Whitehead group}
\subjclass[2010]{Primary 20F05, Secondary 20F16, 20F28}

\begin{document}
\maketitle
\begin{abstract}
Let $G \simeq M \rtimes C$ be an $n$-generator group which is a split extension of an Abelian group $M$ by a cyclic group $C$. 
We study the Nielsen equivalence classes and T-systems of generating $n$-tuples of $G$.
The subgroup $M$ can be turned into a finitely generated faithful module over a suitable quotient $R$ of the integral group ring of $C$.
When $C$ is infinite, we show that the Nielsen equivalence classes of the generating $n$-tuples of $G$ correspond bijectively 
to the orbits of unimodular rows in $M^{n -1}$ under the action of a subgroup of $\GL_{n - 1}(R)$. 
Making no assumption on the cardinality of $C$, we exhibit a complete invariant of Nielsen equivalence in the case $M \simeq R$.
As an application, we classify Nielsen equivalence classes and $\T$-systems of soluble Baumslag-Solitar groups, split metacyclic groups and lamplighter groups.
\end{abstract}

\section{Introduction} \label{SecIntro}
\subsection{Nielsen equivalence related to equivalence of unimodular rows} 
Given a finitely generated group $G$, we denote by $\rk(G)$ the minimal number of its generators. For $n \ge \rk(G)$, we let $\V_n(G)$ be the set of \emph{generating $n$-vectors} of $G$, i.e., the set of elements in $G^n$ whose components generate $G$.
In order to classify generating vectors, we can rely on a well-studied equivalence relation on $\V_n(G)$, namely the \emph{Nielsen equivalence relation}: two generating  $n$-vectors are said to be \emph{Nielsen equivalent} if they can be related by a finite sequence of  transformations taken in the set $\{ L_{ij}, I_i; 1 \le i \neq j \le n\}$ where
$L_{ij}$ and $I_i$ replace the component $g_i$ of $\gb = (g_1, \dots, g_n) \in \V_n(G)$ by $g_j g_i$ and $g_i^{-1}$ respectively and leave the other components unchanged.
We recommend \cite{Lub11, Eva07,  Pak01, LM93} to the reader interested in Nielsen equivalence and its applications.
 Let $F_n$ be the free group with basis $\xb = (x_1,\dots,x_n)$. The Nielsen equivalence relation turns out to be generated by an $\Aut(F_n)$-action. Indeed, the set
$\V_n(G)$ identifies with the set $\operatorname{Epi}(F_n,G)$ 
of epimorphisms from $F_n$ onto $G$ via the bijection $\gb \mapsto \pi_{\gb}$ with $\pi_{\gb}$ defined by $\pi_{\gb}(\xb) = \gb$.
Therefore defining $\gb \psi$ for $\psi \in \Aut(F_n)$ through 
$\pi_{\gb \psi} \Doteq \pi_{\gb} \circ \psi$ yields a right group action of $\Aut(F_n)$ on $\V_n(G)$. 
Because $\Aut(F_n)$ has a set of generators which induce the elementary Nielsen transformations $L_{ij}$ and $I_i$ \cite[Proposition 4.1]{LS77},
this action generates the Nielsen equivalence relation. 

In this article, we are concerned with finitely generated groups $G$ containing an Abelian normal subgroup $M$ and a cyclic subgroup $C$ such that $G = MC$ and $M \cap C = 1$. Denoting by $\sigma$ the natural map $G \twoheadrightarrow G/M \simeq C$, such a group $G$ fits into the split exact sequence 
\begin{equation} \label{EqExt}
0 \longrightarrow M  \longrightarrow G \xlongrightarrow{\sigma} C  \longrightarrow 1
\end{equation}
where the arrow from $M$ to $G$ is the inclusion $M \subset G$. 
The cyclic group $C = \langle a \rangle$ is finite or infinite and is given together with a generator $a$.
The action of $\Aut(F_n)$ on $\V_n(G)$ is known to be transitive if $n > \rk(G) + 2$
\cite[Theorem 4.9]{Eva93}. Our goal is to describe the $\Aut(F_n)$-orbits for the three exceptional values of $n$, namely 
$\rk(G)$, $\rk(G) + 1$ and $\rk(G) + 2$. Our main results are Theorem \ref{ThN2} and Theorem \ref{ThT2} below. They enable us to compute the exact number of Nielsen equivalence classes and $\T$-systems in a number of cases illustrated by Corollaries \ref{CorBS}, \ref{CorMetacyclic} and \ref{CorWreath}.
These theorems rely on Theorem \ref{ThMTimesC}, which relate the problem of classifying 
Nielsen equivalence classes to a pure module-theoretic problem involving $M$. The following definitions will make this relation precise. 

The conjugacy action of $C$ on $M$ defined by
$\leftexp{c}{m} \Doteq cmc^{-1}$, with $m \in M$ and $c \in C$ extends linearily to $\Z \br{C}$, turning $M$ into a module over $\Z \br{C}$. 
Let $\ann(M)$ be the annihilator of $M$. Then $M$ is a faithful module over 
$$R \Doteq \ZC/\ann(M).$$

Let $\rk_R(M)$ be the minimal number of generators of $M$ considered as an $R$-module. 
For $n \ge \rk_R(M)$, we denote by $\U_n(M)$ the set of elements in $M^n$ whose components generate $M$ 
as an $R$-module. The group $\GL_n(R)$ acts on $\U_n(M)$ by matrix right-multiplication.  There are two subgroups of $\GL_n(R)$ which are relevant to us.  The first is $\E_n(R)$, the subgroup generated by the elementary matrices, i.e., the matrices that differ from the identity by a single off-diagonal element (agreeing that $\E_1(R) = \{1\}$). The second is $\D_n(T)$, the subgroup of diagonal matrices whose diagonal coefficients belong to $T$, the \emph{group of trivial units}. We call a unit in $R^{\times} \Doteq \GL_1(R)$ a \emph{trivial unit}, if it lies in the image of $\pm C$ by the natural map $\ZC \twoheadrightarrow R$.
Theorem \ref{ThMTimesC} below establishes a connection between the $\Aut(F_n)$-orbits of generating $n$-vectors and the orbits of unimodular rows in $M$ with size $n -1$ under the action of 
$$
\Gamma_{n -1}(R) \Doteq \D_{n- 1}(T) \E_{n -1}(R).
$$

Additional definitions are needed to state this result. Denoting by $\vert C \vert$ the cardinality of $C$,
we define the \emph{norm element} of $\ZC$ to be $0$ if $C$ is infinite, and to be $1 + a + \cdots + a^{\cC - 1}$ otherwise.
Let $\nu(G)$ be the image in $R$ of the norm element of $\ZC$ via the natural map.
Let $\pi_{ab} :G \twoheadrightarrow \Gab$ be the abelianization homomorphism of $G$ and let $M_C$ be the largest quotient of $M$ with a trivial $C$-action.
We assume throughout this paper that
$n \ge \max(\rk(G), 2)$ whenever the integer $n$ refers to the size of generating vectors of $G$.
Let $\varphi_a: \U_{n - 1}(M) \rightarrow \V_n(G)$ be defined by
$\varphi_a(\mb) = (\mb, a)$. It is elementary to check that $\varphi_a$ induces a map
$$\Phi_a: \U_{n - 1}(M)/\Gamma_{n - 1}(R) \rightarrow \V_n(G)/\Aut(F_n)$$
If $\nu(G) = 0$, e.g, $C$ is infinite, then Lemma \ref{LemNu} below shows that $n > \rk_R(M)$ holds true and that $\Phi_a$ is surjective.
Our first result fully characterizes when the latter two conditions hold simultaneously.

\begin{Atheorem}[Theorems \ref{ThReductionToARow} and \ref{ThNielsenCInfinite}] \label{ThMTimesC} 
The inequality $n > \rk_R(M)$ holds and the map $\Phi_a$ is surjective if, and only if, at least one of the following holds:
\begin{itemize} 
\item[$(i)$] $n  > \rk(\Gab)$.
\item[$(ii)$] $C$ is infinite.
\item[$(iii)$] $\rk(G) > \rk(M_C) $ and $M_C$ is not isomorphic to $\Z^{\rk(G) - 1}$.
\item[$(iv)$] $\cC \in \{2, 3, 4, 6\}$ and $M_C$ is isomorphic to $\Z^{\rk(G) - 1}$.
\end{itemize}
In addition, the map $\Phi_a$ is bijective if $C$ is infinite.
\end{Atheorem}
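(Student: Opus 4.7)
The statement bundles two results (Theorems \ref{ThReductionToARow} and \ref{ThNielsenCInfinite}), so the plan is to handle the surjectivity equivalence and the injectivity (in case $C$ is infinite) separately.

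For surjectivity of $\Phi_a$, the strategy is to reduce an arbitrary $\gb=(g_1,\dots,g_n)\in\V_n(G)$ by Nielsen transformations to a vector of the form $(\mb,a)$ with $\mb\in\U_{n-1}(M)$. First I push down via $\sigma$: the row $(\sigma(g_1),\dots,\sigma(g_n))$ generates $C$, and since Nielsen transformations in $C$ lift to Nielsen transformations in $G$, I may assume $\sigma(g_i)=1$ for $i<n$ and $\sigma(g_n)=a^k$ with $\langle a^k\rangle=C$. At this stage $g_i\in M$ for $i<n$ and $g_n=m\,a^k$ for some $m\in M$, and generation of $G$ forces the $g_i$'s together with the $C$-conjugates of $m$ to generate $M$ as an $R$-module. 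The remaining task is to convert $a^k$ to $a$ while keeping the first $n-1$ entries in $M$. Each of the four hypotheses provides a mechanism: (ii) gives $k=\pm 1$ for free (applying $I_n$ if needed); (i) exploits extra slack in $\Gab$ so that one may use an $L_{in}$ transformation followed by killing the extra abelianized contribution; (iii) uses the existence of a nonzero element of $M$ on which $C$ acts trivially but which is not $\Z^{\rk(G)-1}$, so that a carefully chosen element of $M$ swallows the correction factor; (iv) appeals to the fact that the trivial units of $\Zd$ for $d\in\{2,3,4,6\}$ generate $\Zd^\times$, so a unit of $R$ lifting the correction can be realized by a diagonal matrix in $\D_{n-1}(T)$. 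The dimension inequality $n>\rk_R(M)$ is obtained along the way by a Nakayama-type count using $\gb\mapsto\sigma(\gb)$ and the $R$-module structure on $M$.

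For the converse, I need to exhibit obstructions when all of (i)--(iv) fail. This means $n=\rk(\Gab)$, $C$ is finite, and $M_C\simeq\Z^{\rk(G)-1}$ with $\cC\notin\{2,3,4,6\}$. The obstruction is a Whitehead-type/$\SK_1$ invariant: the image in $R^\times$ of the product of diagonal entries of the transition matrix taking $\mb$ to $\mb'$, reduced modulo trivial units, is well defined on $\Aut(F_n)$-orbits when $\nu(G)\ne 0$, and in this regime it distinguishes orbits that lie outside the image of $\Phi_a$. Concretely, non-trivial units of $\Zd$ (which exist precisely when $d\notin\{1,2,3,4,6\}$, by Dirichlet) provide generating vectors of $G$ not Nielsen equivalent to any $(\mb,a)$.

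For bijectivity in the case $C$ infinite, surjectivity is (ii); for injectivity I track the effect of a Nielsen transformation on a vector of the form $(\mb,a)$. The generators $L_{ij},I_i$ for $i,j<n$ act on $\mb$ by elementary matrices in $\E_{n-1}(R)$ and sign inversions in $\D_{n-1}(T)$ respectively; the transformation $L_{in}$ with $i<n$ multiplies $m_i$ by $a$, i.e., by an invertible element of $R$, and so is realised by an element of $\D_{n-1}(T)$; $I_n$ and $L_{nj}$ temporarily break the form $(\mb,a)$, but the key observation is that after any such transformation a bounded further sequence of Nielsen moves restores the form, with net effect on $\mb$ again in $\Gamma_{n-1}(R)$. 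The infiniteness of $C$ is what prevents further identifications: because $\sigma(a)=a$ has infinite order, the last component after any sequence of Nielsen moves producing $(\mb',a)$ is truly $a$ (not $a^{-1}$ times an element of $M$ identified with $a$ via torsion), so the normalisation is unique up to the action of $\Gamma_{n-1}(R)$.

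The main obstacle I expect is the necessity direction in the surjectivity part: verifying that the list (i)--(iv) is exhaustive requires building the invariant distinguishing Nielsen classes, which needs both the $\GE_n$-ring structure of $\ZC/\ann(M)$ in the excluded cases and an explicit computation of how Nielsen transformations permute the cosets $\GL_{n-1}(R)/\Gamma_{n-1}(R)$. The case analysis around $\cC\in\{2,3,4,6\}$ versus $\cC=5$ or $\cC\ge 7$ will likely be the most delicate, as it pivots on the classical fact that $\Z[\zeta_d]$ has only trivial units precisely for $d\in\{1,2,3,4,6\}$.
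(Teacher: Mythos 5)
There are two genuine gaps. First, your necessity argument (for the case where all of (i)--(iv) fail) invokes the wrong obstruction. You propose a Whitehead/$\SK_1$-type invariant built from transition matrices in $\GL_{n-1}(R)$ and from non-trivial units of the cyclotomic rings $\Z\br{\zeta_d}$. But the generating vectors that fail to reduce to an $a$-row are of the form $(\mb, a^k)$ with $k$ coprime to $\cC$ and $k \not\equiv \pm 1 \pmod{\cC}$, and the obstruction to converting $a^k$ into $a$ lives entirely in the abelianization $G_{ab} \simeq \Z^{n-1} \times C$: since here $n = \rk(G_{ab})$, the classification of Nielsen classes of generating $n$-vectors of a finitely generated Abelian group (Theorem \ref{ThNielsenAbel}, Corollary \ref{CorDet}) makes the determinant $\delta \in (\Z/\cC\Z)^{\times}$ an invariant up to sign. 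The set $\{2,3,4,6\}$ enters as $\{d \,:\, \varphi(d) \le 2\}$, i.e., the $d$ for which $(\Z/d\Z)^{\times} = \{\pm 1\}$ --- not via Dirichlet's unit theorem for $\Z\br{\zeta_d}$; the coincidence of the two answer sets is just that both are governed by $\varphi(d)\le 2$. Moreover, an invariant defined on transition matrices between $(n-1)$-rows cannot even be evaluated on a vector $(\mb, a^k)$ that is not an $a$-row, so it cannot certify that such a vector lies outside the image of $\Phi_a$.

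Second, the injectivity argument for $C$ infinite is not a proof. Tracking elementary Nielsen moves one at a time fails because intermediate vectors leave the normal form $(\mb, a)$, and your claim that a bounded further sequence of moves restores the form with net effect in $\Gamma_{n-1}(R)$ is precisely the assertion that needs proving. The paper's Proposition \ref{PropConverseZ} instead takes the whole automorphism $\psi \in \Aut(F_n)$ with $(\mb,a)\psi = (\mb',a)$ and decomposes it globally as $\psi_0\psi_1 L$ with $\psi_0 \in \IA(F_n)$, $\psi_1 \in \Aut(F_{n-1})$ and $L$ a product of the $L_{n,j}$, using that $\IA(F_n)$ is generated by the Magnus automorphisms $\varepsilon_{ijk}$; each factor is then checked to act on $\mb$ through $\Gamma_{n-1}(R)$, and applying $\sigma$ forces $L=1$. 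Some such structural input on $\Aut(F_n)$ is indispensable here. Relatedly, your sufficiency mechanisms for (iii) and (iv) are misattributed to $\D_{n-1}(T)$ and cyclotomic units: the paper absorbs the abelianized determinant into the first coordinate of $M_C$ when $M_C$ is not $\Z$-free (Proposition \ref{PropReductionToRow}), or into the exponent $k$ with $k \equiv \pm 1 \pmod{\cC}$ when $\varphi(\cC) \le 2$ (Proposition \ref{PropReductionToRowFree}), and then cancels the residual $M$-part of the last entry by Lemma \ref{LemGeneratorsOfM}. Your outline for (ii) and for the inequality $n > \rk_R(M)$ is essentially the paper's (Lemmas \ref{LemRankOfM} and \ref{LemNu}), but the two gaps above concern exactly the delicate directions of the statement.
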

Evidently, Theorem \ref{ThMTimesC} has no bearing on the case $n = \rk(G) = \rk_R(M)$. Proposition \ref{PropCyclicDecMaxRank} below handles this situation only when $M$ is a free module and most of our results assume that $n > \rk_R(M) $.

Combining Theorem \ref{ThMTimesC}  with various assumptions on $C$, $M$ or $R$ (e.g., $C$ is infinite and $R$ is Euclidean), we obtain a complete description of Nielsen equivalence classes of generating $n$-vectors for all $n > \rk_R(M)$. Applications to groups with arbitrary ranks are gathered in Corollaries \ref{CorGE} and \ref{CorGEMFree} below.
We present now an example of a group to which Theorem \ref{ThMTimesC} applies. 
We shall denote by $\nielsen_n(G)$ the cardinality of the set of Nielsen equivalence classes of generating $n$-vectors of $G$.
\begin{Acorollary}[Corollaries \ref{CorGE} and \ref{CorGEMFree}] \label{CorFpToDRtimesZ}
Let $p$ be a prime number and let $d \ge 1$. Let $G = \F_p^d \rtimes_A \Z$
 where $\F_p$ denotes the field with $p$ elements and where the canonical generator of $\Z$ acts on $\F_p^d$ as a matrix $A \in \GL_d(\F_p)$. Then $\nielsen_{\rk(G)}(G) = \vert R^{\times}/T \vert$ where $R = \F_p \br{X} / (P(X))$, $P(X) \in \F_p\br{X}$ is the first invariant factor of $A$ and $T$ is the subgroup of $R^{\times}$ generated by the images of $-1$ and $X$. Moreover, $\nielsen_n(G) = 1$ if $n > \rk(G)$.
\end{Acorollary}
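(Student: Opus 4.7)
The first step is to identify the ring $R$ and the module structure explicitly. Regarding $M = \F_p^d$ as a $\Z\br{C} \simeq \Z\br{X^{\pm 1}}$-module with $X$ acting as $A$, we have $p \cdot M = 0$ and $P(A) = 0$ where $P$ is the minimal polynomial of $A$ (the first invariant factor in the paper's convention), so $\ann(M) = (p, P(X))$, giving $R = \F_p\br{X^{\pm 1}}/(P(X))$. The structure theorem for modules over the PID $\F_p\br{X^{\pm 1}}$ yields $M \simeq \bigoplus_{i=1}^{k} \F_p\br{X^{\pm 1}}/(d_i)$ with invariant factors $d_1 \mid \cdots \mid d_k = P$, so $\rk_R(M) = k$.

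Next I would establish $\rk(G) = k + 1$. An explicit $(k+1)$-generating tuple of the form $(m_1, \ldots, m_k, a)$ with $\mb \in \U_k(M)$ gives the upper bound. For the lower bound, Nielsen reduction on the $C$-components of any generating $r$-tuple of $G$ brings it to the shape $((m_0, a), (m_1, 1), \ldots, (m_{r-1}, 1))$, and since conjugation by $(m_0, a)$ realises the $X$-action on $M$, the components $m_1, \ldots, m_{r-1}$ must $R$-generate $M$, forcing $r \geq k + 1$.

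Because $C = \Z$ is infinite, condition $(ii)$ of Theorem \ref{ThMTimesC} is in force, so $\Phi_a$ is bijective for every $n > \rk_R(M) = k$, i.e., for every $n \geq \rk(G)$. Hence $\nielsen_n(G) = \vert \U_{n-1}(M)/\Gamma_{n-1}(R)\vert$ for all such $n$. The ring $\F_p\br{X^{\pm 1}}$ is Euclidean (a localisation of the Euclidean ring $\F_p\br{X}$), so its quotient $R$ is a finite, quasi-Euclidean ring of stable range $1$; in particular $\SL_n(R) = \E_n(R)$ for all $n \geq 2$. Applying Corollaries \ref{CorGE} and \ref{CorGEMFree}, which rely on exactly this structural property of $R$, yields $\vert \U_k(M)/\Gamma_k(R)\vert = \vert R^{\times}/T\vert$ at the minimal rank $n = \rk(G)$, while stable-range-$1$ transitivity of $\Gamma_{n-1}(R)$ on $\U_{n-1}(M)$ gives $\vert \U_{n-1}(M)/\Gamma_{n-1}(R)\vert = 1$ for $n > \rk(G)$.

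The main obstacle is identifying $\vert \U_k(M)/\Gamma_k(R)\vert$ with $\vert R^{\times}/T\vert$ when $M$ is not free over $R$, which occurs whenever $A$ has more than one invariant factor. Corollary \ref{CorGEMFree} disposes of the free case through a determinant computation on $\GL_k(R)$ using $\SL_k(R) = \E_k(R)$; for the general case one must fix a presentation $R^k \twoheadrightarrow M$ compatible with the invariant-factor decomposition of $M$ and transport the determinant-type invariant through this presentation, modding out by the $R$-module automorphisms of $M$. This passage from a presentation of $M$ to the orbit count is the content of Corollary \ref{CorGE}, and it is the conceptual core of the proof.
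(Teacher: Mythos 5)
There is a genuine error, and it sits exactly at the point you yourself identify as ``the conceptual core'': the identification of $P(X)$. You take $P$ to be the \emph{minimal} polynomial of $A$, so that the ring in the formula coincides with the faithful ring $R = \Z\br{C}/\ann(M) = \F_p\br{X^{\pm1}}/(m_A(X))$. But in the paper's convention (see the definition of the invariant factors $\ia_1 \supset \ia_2 \supset \cdots$ preceding Corollary \ref{CorGE}, and the abelian model in Theorem \ref{ThNielsenAbel}, where the count is governed by $d_1$ with $d_1 \mid d_2 \mid \cdots$), the \emph{first} invariant factor of $A$ is the one dividing all the others, i.e.\ the smallest one $a_1$, not the minimal polynomial $a_k$. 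The ring appearing in the corollary is therefore $\Lambda = R/\ia_1 = \F_p\br{X^{\pm1}}/(a_1(X))$, and the count at minimal rank is $\vert \Lambda^{\times}/T_{\Lambda}\vert$, which is Corollary \ref{CorGE}.$(ii)$ applied to the quasi-Euclidean ring $R$. The equality you aim for, $\vert \U_k(M)/\Gamma_k(R)\vert = \vert R^{\times}/T\vert$ with $R$ the faithful ring, is simply false once $M$ is not free over $R$: take $p = 7$ and $A = \diag(2,2,3)$, so that $a_1 = X-2$, $m_A = (X-2)(X-3)$ and $R \simeq \F_7 \times \F_7$. A direct orbit computation (or Corollary \ref{CorGE}.$(ii)$) gives $\nielsen_3(G) = \vert \F_7^{\times}/\langle -1, 2\rangle\vert = 1$, whereas $\vert R^{\times}/T\vert = 36/12 = 3$. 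So no ``transport of a determinant-type invariant through a presentation $R^k \twoheadrightarrow M$'' can produce $\vert R^{\times}/T\vert$; the correct target, supplied by \cite[Theorem A and Corollary C]{Guy17}, is $\vert \Lambda^{\times}/T_{\Lambda}\vert$. When $A$ has a single repeated invariant factor the two readings coincide (and Corollary \ref{CorGE}.$(i)$ applies), which is why the error only surfaces in the non-free case you flagged.

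The remainder of your argument is sound and follows the paper's route: the computation of $\ann(M)$ and of $\rk_R(M)$, the use of Theorem \ref{ThMTimesC} (with $C$ infinite) to identify $\V_n(G)/\Aut(F_n)$ with $\U_{n-1}(M)/\Gamma_{n-1}(R)$, the observation that $R$ is a finite quasi-Euclidean ring of stable rank $1$, and the deduction of $\nielsen_n(G) = 1$ for $n > \rk(G)$ from Corollary \ref{CorGEMFree}.$(i)$ are all correct and match the paper's proof. Only the first half of the displayed formula needs to be repaired as above.
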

In the above example, the polynomial $P(X)$ can be computed by means of the Smith Normal Form algorithm \cite[Section 12.2]{DF04} and, from there, an explicit formula can be derived for $\nielsen_{\rk(G)}(G)$. Indeed, if $P(X)$ is of degree $k$ and has $l$ irreducible factors with degrees $d_1, \dots, d_l$, then $\vert R^{\times} \vert =
p^{k} \prod_{i = 1}^l (1 - p^{d_i - k})$ (use for instance Lemma \ref{LemUnitsOfAFiniteRing}) while the value of $\vert T \vert$ can be deduced from the computation of the order in $\GL_k(\F_p)$ of the companion matrix of $P(X)$.

\subsection{Main results}
In this section, we make no assumption on the cardinality of $C$ but suppose that $M \simeq R$. Therefore $G \simeq R \rtimes C$ is generated by $a$ and the identity $b$ of the ring $R$. At this stage, few more examples may help understand the kind of two-generated groups we want to address. Assume that $C$ is the cyclic subgroup of $\GL_2(\Z)$ generated by an invertible matrix $a$. Let $b$ denote the $2$-by-$2$ identity matrix and let $G$ be the semi-direct product $\Z^2 \rtimes_a C$ where $a$ acts on $\Z^2$ via matrix multiplication.
It is readily checked that $\rk(G) = 2$ if and only if $M \Doteq \Z^2$ is a cyclic $\ZC$-module. If this holds, then $M$ naturally identifies with the subring $R = \Z a + \Z b$ of the ring of $2$-by-$2$ matrices over $\Z$ and we can certainly write $G \simeq R \rtimes_a C$.
If the minimal polynomial of $a$ is moreover irreducible and if $\alpha \in \C$ is one of its roots, then $G$ identifies in turn with the semi-direct product $G(\alpha) = \Z\br{\alpha^{\pm 1}} \rtimes_{\alpha} \langle \alpha \rangle$ where $\alpha$ acts on $\Z\br{\alpha^{\pm 1}} \subset \C$ via complex multiplication.
 For arbitrary choices of $\alpha \in \C$, the family $G(\alpha)$ provides us with countably many interesting non-isomorphic examples. 
For instance, if $\alpha \in \Z \setminus \{0\}$, then $G(\alpha)$ is the Baumslag-Solitar group $\Pres{a,b}{aba^{-1} = b^{\alpha}}$, which is handled in Corollary \ref{CorBS} below.
If $\alpha$ is transcendental over $\Q$, then $\Z\br{\alpha^{\pm 1}}$ is isomorphic to the ring $\ZX$ of univariate Laurent polynomials over $\Z$. In this case, the group $G(\alpha)$ is isomorphic to the restricted wreath product $\Z \wr \Z$, the subject of Corollary \ref{CorZwrZ} below.

Let us return to the presentation of our results.
Under the assumption $M \simeq R$, we shall exhibit a complete invariant of Nielsen equivalence for generating pairs. In addition, if $n = 3$ and $C$ is finite, or if $n = 4$, we prove that $\Aut(F_n)$ acts transitively on $\V_n(G)$.
Note that if $n = 3$ and $C$ is infinite, then Theorem \ref{ThMTimesC} reduces the study of the Nielsen equivalent triples to the description of the orbit set $\U_2(R)/\E_2(R)$.
Our invariant is based on a map $D$ defined as follows.
If $\nu(G) = 0$, there is a unique derivation $d \in \Der(C, R)$ satisfying $d(a)=1$ (see Section \ref{SecTsys}).  
For $\gb = (rc, r'c') \in G^2$ with $(r,r') \in R^2$ and $(c,c') \in C^2$, we set then
\begin{equation} \label{EqD1}
D(\gb) \Doteq r' d(c) - r d(c') \in R.
\end{equation} 
If $\nu(G) \neq 0$, we set furthermore
\begin{equation} \label{EqD2}
D(\gb) \Doteq D(\pi_{\nu(G) R}(\gb)) \in R/\nu(G) R,
\end{equation} where 
$\pi_{\nu(G) R}$ stands for the natural homomorphism $R \rtimes C \twoheadrightarrow (R/\nu(G) R) \rtimes C$ and
 the right-hand side of (\ref{EqD2}) is defined as in (\ref{EqD1}). The following observations will enable us to construct the desired invariant.

\begin{Aproposition}[Lemma \ref{LemmaDelta} and Proposition \ref{PropGeneratorsDelta}] \label{PropDa}
Let $G \simeq R \rtimes_{\alpha} C$ and let $\gb \in G^2$.
\begin{itemize}
\item[$(i)$] If $\gb \in \V_2(G)$, then $D(\gb) \in (R/\nu(G) R)^{\times}$.
\item[$(ii)$] Assume $\nu(G)$ is nilpotent. 
Then $\gb$ generates $G$ if and only if $\sigma(\gb)$ generates $C$ and $D(\gb) \in (R/\nu(G) R)^{\times}$.
\end{itemize}
\end{Aproposition}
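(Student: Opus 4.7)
The plan splits naturally into two parts, matching the Lemma and Proposition cited. The overarching idea is to bring $\gb$ into a normal form via Nielsen moves and to reduce to the case $\nu(G)=0$ by passing to the quotient $\ov{G}\Doteq (R/\nu(G)R)\rtimes C$, for which $\nu(\ov{G})=0$ by construction; the map $D$ is well-defined on $\ov{G}$ and its value on $\ov{\gb}$ coincides with $D(\gb)\in R/\nu(G)R$.

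The first part establishes that $D(\gb)$ is invariant under Nielsen moves up to multiplication by a trivial unit. Writing $g_i=r_ic_i$, the move $(g_1,g_2)\mapsto(g_2g_1,g_2)$ produces $g_2g_1=(r_2+\leftexp{c_2}{r_1})c_2c_1$; expanding via the derivation identity $d(c_2c_1)=d(c_2)+\leftexp{c_2}{d(c_1)}$ and using that the $C$-action on $R$ is multiplication by the image $\ov{c_2}\in R^\times$ yields $D((g_2g_1,g_2))=\ov{c_2}\,D(\gb)$. The move $(g_1,g_2)\mapsto(g_1^{-1},g_2)$ gives $-(\ov{c_1})^{-1}D(\gb)$ via $d(c_1^{-1})=-(\ov{c_1})^{-1}d(c_1)$, and the swap changes only the sign. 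Together these cover every elementary Nielsen move.

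Because $\sigma(\gb)$ generates $C$ in both parts (automatic in (i), assumed in (ii)) and generating pairs of the cyclic group $C$ form a single Nielsen class, I reduce $\gb$ via Nielsen moves to $\gb'=(r_1a,\,r_2)$. Direct computation gives $D(\gb')=r_2\,d(a)-r_1\,d(1)=r_2$ in $R/\nu(G)R$. Assuming first $\nu(G)=0$, so $M=R$, I compute $\langle\gb'\rangle\cap M$ using a Nielsen-Schreier generating set for the kernel of $F_2\twoheadrightarrow C$, $x_1\mapsto a$, $x_2\mapsto 1$: the conjugates $g_1^kg_2g_1^{-k}=\ov{a}^kr_2$ contribute $Rr_2$, while the additional generator $(g_1')^{|C|}=\nu(G)r_1$ (present only when $|C|$ is finite) vanishes. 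Hence $\langle\gb'\rangle\cap M=Rr_2$, so $\langle\gb'\rangle=G$ if and only if $r_2\in R^\times$, which proves both (i) and (ii) in the case $\nu(G)=0$.

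For general $\nu(G)$, I pass to $\ov{G}$. In (i), $\gb$ generating $G$ implies $\ov{\gb}$ generates $\ov{G}$, hence $D(\gb)=D(\ov{\gb})\in(R/\nu(G)R)^\times$ by the case just settled. In (ii), the hypothesis gives $\ov{\gb}$ generating $\ov{G}$, equivalently $(\langle\gb\rangle\cap M)+\nu(G)R=R$. Setting $N\Doteq\langle\gb\rangle\cap M$, which is an $R$-ideal since $\sigma$ surjects onto $C$, the equality $N+\nu(G)R=R$ together with $(\nu(G)R)\cdot N\subseteq N$ yields $N+(\nu(G)R)^k=R$ for all $k$ by a standard Nakayama-type induction, and the assumed nilpotence of $\nu(G)$ forces $N=R$, whence $\langle\gb\rangle=G$. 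The principal obstacle is the subgroup computation for $\langle\gb'\rangle\cap M$, where one must verify that no elements appear beyond those produced by the explicit kernel generators, requiring separate bookkeeping in the finite and infinite cyclic cases; the Nakayama-type lift is routine once the $\nu(G)=0$ case is established.
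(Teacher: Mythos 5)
Your proof is correct and follows essentially the same route as the paper: reduce to $\nu(G)=0$ by passing to $(R/\nu(G)R)\rtimes C$, check that $D$ transforms by trivial units under the elementary Nielsen moves, bring $\gb$ to a normal form with $\sigma$-image $(a,1_C)$, and read off $D$ as the remaining ring coordinate. The only (harmless) local differences are that you re-derive the identification of $\langle\gb'\rangle\cap M$ via Reidemeister--Schreier where the paper invokes its Lemmas \ref{LemRankOfM} and \ref{LemGeneratorsOfM}, and in part $(ii)$ you run Nakayama on the ideal $\langle\gb\rangle\cap M$ where the paper lifts the unit through $\Jac(R)$.
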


Setting $$\Lambda = R / \nu(G) R, \quad T_{\Lambda} = \pi_{\nu(G) R}(T),$$ 
we define the map
$$
\begin{array}{cccc}
\Delta:&  \V_2(G) & \rightarrow & \Lambda^{\times}/ T_{\Lambda} \\
           & \gb         & \mapsto                & T_{\Lambda} D(\gb)
\end{array}
$$
The map $\Delta$ is the invariant we need and  we are now in position to describe 
the Nielsen equivalence classes of generating $n$-vectors of $G = R \rtimes C$ for $n = 2, 3$ and $4$.

\begin{Atheorem}[Theorems \ref{ThNielsenRankTwo} and \ref{ThTriplesAndQuadruples}] \label{ThN2}
Let $G = R \rtimes C$. Let $\nielsen_n(G)$ ($n =2, 3$ and $4$) be defined as in Corollary \ref{CorFpToDRtimesZ}.
Then the following hold:
\begin{itemize}
\item[$(i)$]
Two generating pairs $\gb, \gb'$ of $G$ are Nielsen equivalent if and only if
$ \pi_{ab}(\gb)$ and $\pi_{ab}(\gb')$ are Nielsen equivalent 
and 
$
\Delta(\gb) = \Delta(\gb').
$
\item[$(ii)$]
If $C$ is infinite or $\Gab$ is finite, then $\Delta$ induces a bijection  from\\
$\V_2(G)/\Aut(F_2)$ onto $\Lambda^{\times}/T_{\Lambda}$. In particular, $\nielsen_2(G) = \vert \Lambda^{\times}/T_{\Lambda} \vert$.
\item[$(iii)$]
If $C$ is finite and $\Gab$ is infinite, then $\nielsen_2(G) = \max(\frac{\varphi(\vert C \vert)}{2}, 1)\vert \Lambda^{\times}/T_{\Lambda} \vert$.
\item[$(iv)$] If $\SL_2(R) = \E_2(R)$, e.g., $C$ is finite, then $\nielsen_3(G) = 1$.
\item[$(v)$] $\nielsen_4(G) = 1$.
\end{itemize}
\end{Atheorem}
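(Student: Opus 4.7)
The plan is to address the four parts in sequence, using Theorem \ref{ThMTimesC} as the main reduction and Proposition \ref{PropDa} to manipulate the invariant $\Delta$.

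For part (i), the forward implication rests on functoriality of the abelianization and on the invariance of $\Delta$ under each elementary Nielsen transformation. The latter is a direct computation using the cocycle identity $d(cc') = d(c) + c\, d(c')$ combined with the fact that each $c \in C$ is a trivial unit; for example, one checks $D(\gb^{L_{12}}) = c_2 \cdot D(\gb)$, which coincides with $D(\gb)$ modulo $T_\Lambda$. For the converse, I would lift a chain of Nielsen moves realizing $\pi_{ab}(\gb) \sim \pi_{ab}(\gb')$ back to $G$, reducing to $\pi_{ab}(\gb) = \pi_{ab}(\gb')$. A further pair of moves normalizes $\sigma(\gb) = \sigma(\gb') = (1, a)$, so $\gb = (m, a)$ and $\gb' = (m', a)$ with $m, m' \in R^{\times}$ by Proposition \ref{PropDa}. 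The abelianization hypothesis yields $m - m' \in (a-1)R$, while the $\Delta$-equality yields $m \equiv tm' \pmod{\nu(G) R}$ for some $t \in T$; the $\Gamma_1(R) = T$-action supplied by Theorem \ref{ThReductionToARow} then carries $(m, a)$ to $(tm', a)$, and a final transvection by an element of $(a-1)R$ bridges $(tm', a)$ to $(m', a)$.

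For part (ii), when $C$ is infinite, Theorem \ref{ThMTimesC} furnishes a bijection $\Phi_a : \U_1(R)/\Gamma_1(R) \xrightarrow{\sim} \V_2(G)/\Aut(F_2)$. Since $\nu(G) = 0$, we have $\Lambda = R$ and $T_\Lambda = T$; the direct computation $D((u, a)) = -u$ then shows that $\Delta$ sends the Nielsen class of $(u, a)$ to the $T$-coset of $u$ in $R^{\times}/T$, so $\Delta$ realizes the inverse of $\Phi_a$ up to sign and is therefore a bijection. When $G_{ab}$ is finite, part (i) reduces the question to showing that $\Delta$ is surjective onto $\Lambda^{\times}/T_\Lambda$ and that $\Delta(\gb) = \Delta(\gb')$ forces $\pi_{ab}(\gb) \sim \pi_{ab}(\gb')$ in $G_{ab}$. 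Surjectivity follows by evaluating $\Delta$ on pairs $(ub, a)$ as $u$ ranges over lifts of representatives of $\Lambda^{\times}/T_\Lambda$, and the second assertion is a consequence of the classification of Nielsen classes of generating pairs of 2-generated finite abelian groups, which are detected by the same determinant-type invariant that $\Delta$ induces after abelianization.

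For part (iii), since $\rk(G) \le 2$ we have $\rk(G_{ab}) < 3$, and condition (i) of Theorem \ref{ThMTimesC} yields a surjection $\Phi_a : \U_2(R)/\Gamma_2(R) \twoheadrightarrow \V_3(G)/\Aut(F_3)$. Given $(r_1, r_2) \in \U_2(R)$ with Bezout relation $r_1 s_1 + r_2 s_2 = 1$, the matrix with rows $(r_1, r_2)$ and $(-s_2, s_1)$ has determinant $1$, hence lies in $\SL_2(R) = \E_2(R) \subseteq \Gamma_2(R)$; it follows that $(r_1, r_2) \sim (1, 0)$, so $\U_2(R)/\Gamma_2(R)$ is a singleton and $\nielsen_3(G) = 1$.

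For part (iv), the same reduction gives a surjection $\Phi_a : \U_3(R)/\Gamma_3(R) \twoheadrightarrow \V_4(G)/\Aut(F_4)$, and it suffices to show $\U_3(R)/\Gamma_3(R)$ is a singleton. If $C$ is finite or $\ann(M) \neq 0$, then $R$ has Krull dimension at most $1$, so $\sr(R) \le 2$ by Bass's theorem and $\E_3(R)$ acts transitively on $\U_3(R)$. Otherwise $C$ is infinite and $R \simeq \Z[X^{\pm 1}]$, in which case Suslin's theorem gives $\SL_3(R) = \E_3(R)$ while Quillen-Suslin freeness ensures every length-$3$ unimodular row over $\Z[X^{\pm 1}]$ extends to $\SL_3(R)$, hence reduces to $(1, 0, 0)$. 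The main technical obstacle is part (i): lifting the abelianization data and converting the equation $\Delta(\gb) = \Delta(\gb')$ into a concrete sequence of stabilizer Nielsen moves demands careful bookkeeping with the derivation $d$ and the $\nu(G)$-action.
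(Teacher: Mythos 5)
Parts (iii) and (iv) of your proposal are correct and follow essentially the paper's route (reduce to $\U_{n-1}(R)/\Gamma_{n-1}(R)$ via Theorem \ref{ThMTimesC}, then get transitivity of $\E_{n-1}(R)$ on unimodular rows from $\SL_2(R)=\E_2(R)$, respectively from Bass/Suslin). The genuine gap is in part (i). After arranging $\pi_{ab}(\gb)=\pi_{ab}(\gb')$ you assert that ``a further pair of moves normalizes $\sigma(\gb)=\sigma(\gb')=(1,a)$, so $\gb=(m,a)$ and $\gb'=(m',a)$ with $m,m'\in R^{\times}$.'' That normalization is exactly property $\Ni_2(a)$, and Theorem \ref{ThReductionToARow} shows it \emph{fails} precisely when $C$ is finite, $R_C\Doteq R/(1-\alpha)R\simeq\Z$ and $\cC\notin\{2,3,4,6\}$ (e.g.\ $G=\Z\wr\Z_5$ and $\gb=(b,a^2)$): after normalizing the $\sigma$-image to $(1_C,a)$ you obtain a pair $(m,m''a)$ in which $m$ is \emph{not} a unit and the $M$-part $m''$ of the second entry cannot be cancelled. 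Proposition \ref{PropDa}.$ii$ does not rescue the step, since it requires $\nu(G)$ nilpotent, which fails for such $G$. The paper instead reduces, via Proposition \ref{PropReductionToRowFree}, to $(r,a^k)$ and $(r',a^{k'})$ with $k,k'$ determined up to sign by the determinant of the abelianized pairs, uses the hypothesis on $\pi_{ab}$ to conclude $k'\equiv\pm k \bmod \cC$, and then reruns the whole argument with $a^k$ in place of $a$ (Remark \ref{RemarkDaUnicity}). Your proof of (i) must add this case.

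Two smaller points. Even in the case where the reduction to $(m,a)$, $(m',a)$ is legitimate, your last step --- passing from $(tm'+\nu(G)s,\,a)$ to $(tm',a)$ --- is not a ``transvection by an element of $(a-1)R$'': the discrepancy lies in $\nu(G)R$, there are no transvections available on a $1$-row, and the correct mechanism is the $L_{12}^{-\cC}$ trick of Lemma \ref{LemReductionModuloNuRank2}, which exploits $(r_{\nu}a)^{\cC}=\nu(G)r_{\nu}$. Second, in part (ii) your surjectivity argument evaluates $\Delta$ on pairs $(u,a)$ where $u$ is ``a lift of a representative of $\Lambda^{\times}/T_{\Lambda}$''; such a pair generates $G$ only if $u\in R^{\times}$, so you are implicitly using the non-obvious surjectivity of $R^{\times}\to\Lambda^{\times}$, which the paper establishes via Lemma \ref{LemSurjectivity}, using $(1-\alpha)\nu(G)=0$ and the finiteness of $R_C$ to show $\nu(G)$ lies in all but finitely many maximal ideals.
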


 Assertion $(i)$ of Theorem \ref{ThN2} provides us with an algorithm which decides whether or not two generating pairs of $G$ are Nielsen equivalent. Indeed, the first condition in $(i)$ can be determined by means of the Diaconis-Graham determinant \cite{DG99} while the second condition can be reduced to the ideal membership problem in $\ZX$ which is solvable \cite{PU99, Asc04}.

Consider now the left group action of $\Aut(G)$ on $\V_n(G)$ where we define $\phi \gb$ for $\phi \in \Aut(G)$ by $\pi_{\phi \gb} \Doteq \phi \circ \pi_{\gb}$, using the identification of $\V_n(G)$ with $\Epi(F_n , G)$. This action clearly commutes with the right $\Aut(F_n)$-action introduced earlier so that
$
(\phi, \psi) \gb \Doteq  \phi \gb  \psi^{-1}
$
is a left group action of  $\Aut(G)  \times \Aut(F_n)$  on $\V_n(G)$.
Following B.H. Neumann and H. Neumann \cite{NN51}, we call the orbits of this action the \emph{$\T$-systems} of generating $n$-vectors of $G$, or concisely, the \emph{$\T_n$-systems} of $G$. 
We denote by $\tsys_n(G)$ the cardinality of the $\T_n$-systems of $G$.

\begin{Atheorem} \label{ThT2}
Let $G = R \rtimes C$. 
Let $A(C)$ be set of the automorphisms of $C$ which are induced by automorphisms of $G$ preserving $M$. 
Let $A'(C)$ be the subgroup of $\Aut(C)$ generated by $A(C)$ and the involution $c \mapsto c^{-1}$.
Then the following hold. 
\begin{itemize}
\item[$(i)$] 
The cardinality $\tsys_2(G)$ is finite and we have $\tsys_2(G) \le \vert \Aut(C) / A'(C) \vert$, with equality if $R$ is a characteristic subgroup of $G$.
If $C$ is infinite or $\Gab$ is finite, then $\tsys_2(G) = 1$.
\item[$(ii)$] If $C$ is infinite and $R$ is characteristic in $G$, then we have $\vert A(C) \vert \le 2$,
$$\vert A(C) \vert \nielsen_2(G) \tsys_3(G) \ge \nielsen_3(G),\quad \vert A(C) \vert \tsys_3(G) \ge  \vert \SK_1(R)\vert,$$

 where $\nielsen_2(G)$ and $\nielsen_3(G)$ are as in Corollary \ref{CorFpToDRtimesZ} and $\SK_1(R)$ denotes the special Whitehead group of $R$ (see Section \ref{SecN2N3} for a definition of $\SK_1$).
\end{itemize}
\end{Atheorem}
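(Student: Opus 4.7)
The plan is to leverage the invariant $\Delta$ from Theorem~\ref{ThN2} to translate questions about $\T$-systems into questions about $\Aut(G)$-orbits on $\Lambda^\times/T_\Lambda$. The crucial family of automorphisms is the ``scalar multiplications'': for each $u \in R^\times$, one verifies that $\phi_u\colon rc \mapsto (ur)c$ is a group automorphism of $G = R \rtimes C$ fixing $C$ pointwise (using commutativity of $R$), and a direct expansion from the definition of $D$ gives $\Delta(\phi_u \gb) = \bar u\,\Delta(\gb)$. Hence $\{\phi_u\}_{u \in R^\times}$ acts on $\Lambda^\times/T_\Lambda$ as multiplication by the image of $R^\times$.

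For part (i), when $C$ is infinite, $\nu(G) = 0$, so $\Lambda = R$ and the image of $R^\times$ is all of $\Lambda^\times$; thus the $\phi_u$-action is transitive on $\Lambda^\times/T_\Lambda$, which by Theorem~\ref{ThN2}(ii) is in bijection with $\V_2(G)/\Aut(F_2)$. This gives $\tsys_2(G) = 1$, matching $|\Aut(C)/A'(C)| = 1$ since the inversion already lies in $A'(C)$. When $G_{ab}$ is finite, the argument is analogous once one establishes that $R^\times$ surjects onto $\Lambda^\times$; the relation $\nu(G)(\bar a - 1) = 0$ together with the finiteness of $M_C = R/(\bar a - 1)R$ is used to show that $\nu(G) R$ lies in the Jacobson radical of $R$, so units lift. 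For the general upper bound, I construct an invariant $\chi\colon \tsys_2(G) \to \Aut(C)/A'(C)$ by sending $[\gb]$ to the coset of the automorphism $a \mapsto c$ of $C$, where $(c,1)$ is a Nielsen normal form of $\sigma(\gb)$. Well-definedness relies on the precise fact that the ambiguity in $c$ comes from $A(C)$ (via $\Aut(G)$) and from $c \mapsto c^{-1}$ (via Nielsen inversions $I_i$), which together generate $A'(C)$; injectivity uses the $\phi_u$'s and automorphisms realizing $A(C)$ to match any two $\T$-equivalent pairs having the same $\chi$-value. Finiteness of $\tsys_2(G)$ then follows from $|\Aut(C)| < \infty$. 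When $R$ is characteristic in $G$, $A(C)$ is by definition the full image of $\Aut(G) \to \Aut(C)$ and every coset of $A'(C)$ in $\Aut(C)$ is realized by an explicit pair, giving surjectivity of $\chi$ and hence equality.

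For part (ii), the bound $|A(C)| \le 2$ is immediate from $\Aut(C) = \{\pm 1\}$. To prove $\nielsen_3(G) \le |A(C)|\,\nielsen_2(G)\,\tsys_3(G)$, I use Theorem~\ref{ThMTimesC} to identify $\V_3(G)/\Aut(F_3)$ with $\U_2(R)/\Gamma_2(R)$ and analyse the $\Aut(G)$-action on unimodular rows of length $2$. The $\phi_u$'s rescale by $u \in R^\times$, contributing a factor $|R^\times/T| = \nielsen_2(G)$ modulo $\Gamma_2(R) \supseteq \D_2(T)$, while the Galois-type automorphisms realizing $A(C)$ contribute the factor $|A(C)|$. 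Each $\Aut(G)$-orbit therefore contains at most $|A(C)|\,\nielsen_2(G)$ Nielsen classes, which yields the inequality. For $|A(C)|\,\tsys_3(G) \ge |\SK_1(R)|$, the natural surjection $\U_2(R)/\E_2(R) \twoheadrightarrow \SK_1(R)$ is $\Aut(G)$-equivariant, so the orbit count on the source bounds that on $\SK_1(R)$ from below once one accounts for the $|A(C)|$-ambiguity. The main technical obstacle throughout is the careful bookkeeping of how scalar, unit-class, and Galois-type actions interact at the module and $K$-theoretic levels, especially in the stable-rank passage to $\SK_1(R)$.
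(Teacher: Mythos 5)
Your treatment of part $(ii)$, and of the two ``$\tsys_2(G)=1$'' cases in part $(i)$, follows essentially the paper's own route: the first inequality of $(ii)$ is obtained by bounding the index of the stabilizer of a Nielsen class under the $\Aut(G)$-action on $\V_3(G)/\Aut(F_3) \simeq \U_2(R)/\Gamma_2(R)$, and the second by the $\Aut(G)$-equivariance of the Mennicke symbol; your scalar automorphisms $\phi_u$ are exactly the paper's $Y_t$ for $t \in \Aut_R(R)$. Two points in part $(i)$, however, do not hold as written. The minor one: when $G_{ab}$ is finite it is \emph{not} true that $\nu(G)R \subseteq \Jac(R)$. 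Take $C = \Z_2$ acting on $R = \Z_3\br{X}/(X^2-1) \simeq \Z_3 \times \Z_3$: then $G_{ab} \simeq \Z_3 \times \Z_2$ is finite, $\Jac(R) = 0$, yet $\nu(G) = 1 + \alpha \neq 0$. What is true, and what the paper uses (Lemma \ref{LemSurjectivity}), is that $(1-\alpha)\nu(G)=0$ forces $\nu(G)$ into every maximal ideal not containing $1-\alpha$, hence into all but finitely many maximal ideals, and this already suffices for units to lift from $R/\nu(G)R$. Your conclusion survives, but the stated justification fails.

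The serious gap is the invariant $\chi$ underlying the general bound $\tsys_2(G) \le \vert\Aut(C)/A'(C)\vert$ and its equality case. You define $\chi(\gb)$ from a Nielsen normal form $(c,1)$ of $\sigma(\gb)$. But $\sigma(\gb)$ is a generating pair of the cyclic group $C$, and since $2 > \rk(C) = 1$, Theorem \ref{ThNielsenAbel} reduces \emph{every} such pair to $(a, 1_C)$: your $\chi$ is constant, so the claimed injectivity (same $\chi$-value implies same $\T_2$-system) would force $\tsys_2(G) \le 1$, which is false in general by Corollary \ref{CorTTwoSystemsExample}. The genuine invariant lives one level up, in $\V_2(G_{ab}) = \V_2(M_C \times C)$: in the only case where $\Ni_2(a)$ fails ($C$ finite, $M_C \simeq \Z$, $\varphi(\cC) > 2$), Proposition \ref{PropReductionToRowFree} reduces $\gb$ to $(\mb, a^k)$ with $k$ pinned down, up to sign, by $\det_{(\eb,a)}(\pi_{ab}(\gb))$, and it is the class of $k$ modulo the subgroup generated by $A(C)$ and $-1$ that counts the $\T_2$-systems. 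The equality case additionally needs Lemma \ref{LemAutG}: when $R$ is characteristic, every automorphism of $G$ decomposes as $X_d Y_{\tau,\theta}$ with $\theta \in A(C)$, so $(\eb,a)$ and $(\eb,a^k)$ lie in the same $\T_2$-system only if $a \mapsto a^{\pm k}$ lies in $A(C)$. None of this information is recoverable from $\sigma(\gb)$ alone, so as written the upper bound and the equality statement are not established.
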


Note that assertion $(i)$ of Theorem \ref{ThT2} generalizes Brunner's theorem \cite[Theorem 2.4]{Bru74} according to which a two-generated Abelian-by-(infinite cyclic) group has only one $\T_2$-system. We give a wider generalization of Brunner's theorem with Theorem \ref{ThOneTSystem} below.

\subsection{Applications}
Our first corollary shows that there is no upper bound for $\tsys_2(G)$ when $G$ ranges in the class of two-generated split Abelian-by-cyclic groups.
\begin{Acorollary}[Corollary \ref{CorTTwoSystemsExample}] \label{ThT2GN}
For every integer $N \ge 1$, there exists a group $G_N$ of the form $R \rtimes C$ with $C$ finite such that $\tsys_2(G_N) \ge N$,
where $\tsys_2$ is defined as in Theorem \ref{ThT2}.
\end{Acorollary}

For comparison, Dunwoody constructed for every $N \ge 1$ a
two-step nilpotent $2$-group $D_N$ on two generators such that $\tsys_2(D_N) \ge N$ \cite{Dun63}.


Theorems \ref{ThN2} and \ref{ThT2} can be used to compute the number of generating pairs of $G = R \rtimes C$ in each of its Nielsen equivalence classes when $G$ is finite.
\begin{Acorollary} \label{CorV2}
Let $G = R \rtimes C$ and assume that $G$ is finite. Then every Nielsen class of generating pairs has the same number of elements and
$$\vert \V_2(G) \vert = \frac{\vert R \vert  \vert R^{\times} \vert}{ \vert R_C \vert  \vert (R_C)^{\times} \vert}
\vert \V_2(\Gab) \vert$$ where $R_C$ is the largest quotient ring of $R$ with trivial $C$-action.
\end{Acorollary}
In the above identity, the cardinality $\left\vert \V_2(\Gab) \right\vert$ can be computed using the formulas of \cite[Remark 1]{DG99}. In many instances, all terms in Corollary \ref{CorV2}'s formula can be computed. For example, let $G = \F_q \rtimes \F_q^{\times} $ where $\F_q$ is the field with $q = p^k$ elements for $p$ a prime number. Then Corollary  \ref{CorV2} yields $\vert \V_2(G) \vert = \frac{q(q -1)}{p(p -1)} \vert \V_2(\Z/p(q -1)\Z) \vert$. 

We now turn to applications of Theorems \ref{ThN2} and \ref{ThT2} for three different classes of two-generated groups, namely the soluble Baumslag-Solitar groups, the split metacyclic groups and the lamplighter groups.
A \emph{Baumslag-Solitar group} is a group with a presentation of the form 
$$BS(k, l) =\Pres{a,b}{ab^ka^{-1} = b^l}$$ for $k, l \in \Z \setminus \{0\}$.
Brunner proved that $BS(2,3)$ has infinitely many $\T_2$-systems whereas its largest metabelian quotient, namely $G(2/3) = \Z\br{1/6} \rtimes_{2/3} \Z$, has only one $\T_2$-system \cite[Theorem 3.2]{Bru74} . 
The group $BS(k, l)$ is soluble if and only if $\vert k \vert = 1$ or $\vert l \vert = 1$. As a result, a soluble Baumslag-Solitar group is isomorphic to $BS(1, l)$ for some $l \in \Z \setminus \{0\}$ and hence admits a semi-direct decomposition 
$\Z\br{1/l} \rtimes_l \Z$ where the canonical generator of $\Z$ acts as the multiplication by $l$ on 
$\Z\br{1/l} = \{\frac{z}{l^i}; \,z \in \Z, i \in \N\}$.

\begin{Acorollary}\label{CorBS}
Let $G = BS(1, l)$ with $l \in \Z \setminus \{0\}$ and let $\nielsen_n, \tsys_n$ ($n = 2,3$) be defined as in Corollary \ref{CorFpToDRtimesZ} and Theorem \ref{ThT2}. Then the following hold:
\begin{itemize}
\item[$(i)$]
$\nielsen_2(G)$ is finite if and only if $l = \pm p^d$ for some prime number $p \in \N$ and some non-negative integer $d$. 
In this case, $\nielsen_2(G) = \max(d, 1)$.
\item[$(ii)$]
$\tsys_2(G) = \nielsen_3(G) = 1$.
\end{itemize}
\end{Acorollary}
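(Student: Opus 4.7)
The plan is to apply Theorems \ref{ThN2} and \ref{ThT2} directly to the module-theoretic description of $G = BS(1, l)$. First I would identify the relevant data: writing $G \simeq M \rtimes_a C$ with $M = \Z\br{1/l}$ and $C = \langle a \rangle \simeq \Z$ acting on $M$ by multiplication by $l$, the annihilator of $M$ in $\Z\br{C} = \Z\br{a^{\pm 1}}$ is the principal ideal $(a - l)$, since this element kills $M$ and, conversely, any element of $\ann(M)$ must vanish at $a = l$ inside $\Z\br{1/l}$. Hence $R = \Z\br{a^{\pm 1}}/(a - l) \simeq \Z\br{1/l}$ via $a \mapsto l$, and $M \simeq R$ as $R$-modules. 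Since $C$ is infinite we have $\nu(G) = 0$, so $\Lambda = R$, and the subgroup $T_\Lambda = T$ of $R^\times$ is generated by $-1$ and the image of $a$, namely $l$.

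For part $(i)$, Theorem \ref{ThN2}$(ii)$ yields $\nielsen_2(G) = \vert R^\times / T \vert$. Writing $l = \pm \prod_{i=1}^k p_i^{a_i}$ with distinct primes $p_i$ and exponents $a_i \ge 1$, the unit group $\Z\br{1/l}^\times$ is isomorphic to $\Z/2\Z \oplus \Z^k$, with the free part generated by $p_1, \dots, p_k$. Since $-1 \in T$, quotienting $R^\times$ by $T$ collapses the $\Z/2\Z$ factor and reduces to quotienting $\Z^k$ by the cyclic subgroup generated by the exponent vector $(a_1, \dots, a_k)$. This quotient is finite if and only if $k \le 1$. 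When $k = 0$ (so $l = \pm 1$) it is trivial; when $k = 1$ (so $l = \pm p^{a_1}$) it is cyclic of order $a_1$. Letting $d$ denote the exponent in $l = \pm p^d$, allowing $d = 0$, this gives $\nielsen_2(G) = \max(d, 1)$, establishing $(i)$.

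For part $(ii)$, since $C$ is infinite, Theorem \ref{ThT2}$(i)$ yields $\tsys_2(G) = 1$ immediately. For $\nielsen_3(G)$ I would appeal to Theorem \ref{ThN2}$(iii)$: the ring $R \simeq \Z\br{1/l}$, being a localization of $\Z$, is Euclidean, so $\SL_2(R) = \E_2(R)$ and hence $\nielsen_3(G) = 1$. There is no substantive obstacle in this argument; the entire corollary reduces to unpacking the definitions and invoking the general Theorems \ref{ThN2} and \ref{ThT2}, with the only routine check being the unit-group calculation in the middle paragraph.
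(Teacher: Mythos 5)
Your proposal is correct and follows essentially the same route as the paper: identify $R \simeq \Z\br{1/l}$ with $T = \langle -1, l\rangle$, apply Theorem \ref{ThN2}$(ii)$ and the structure of $\Z\br{1/l}^{\times}$ for part $(i)$, and invoke Theorem \ref{ThT2}$(i)$ together with Theorem \ref{ThN2}$(iii)$ (via the Euclidean property of $\Z\br{1/l}$) for part $(ii)$. The only difference is that you spell out the identification of $\ann(M)$ and the unit-group computation in more detail than the paper does.
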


Define $\Z_d = \Z/d\Z$ for $d \ge 0$ (thus $\Z_0 = \Z$) and let $\varphi(d)$ be the cardinality of $\Z_d^{\times}$.
A \emph{split metacyclic group} is a semi-direct product of the form
$\Zrt$ with $k, l \ge 0$. Here the canonical generator of $\Z_l$ is denoted by $a$ and acts on $\Z_k$ as the multiplication by $\alpha \in \Z_k^{\times}$.
\begin{Acorollary} \label{CorMetacyclic}
Let $G = \Zrt$ ($k, l \ge  0$) and  let $\nielsen_n, \tsys_n$ ($n = 2,3$) be defined as in Corollary \ref{CorFpToDRtimesZ} and Theorem \ref{ThT2}.
Then the following hold.
\begin{itemize}
\item[$(i)$] If $k = 0$ and $\alpha = 1$, then $\nielsen_2(G) = \max(\varphi(l)/2, 1)$.
\item[$(ii)$] Assume that $k \neq 0$ or $\alpha \neq 1$. Then 
$\nielsen_2(G) = \frac{\varphi(\lambda)}{\omega}$, where $\lambda \ge 0$ is such that $\Z_{\lambda} \simeq \Z_k/\nu(G) \Z_k$ and $\omega$ is the order of the subgroup of $\Z_{\lambda}^{\times}$ generated by $-1$ and the image of $\alpha$.
\item[$(iii)$]
$\tsys_2(G) = \nielsen_3(G) = 1$.
\item[$(iv)$]
If $G$ is finite, then $\left\vert \V_2(G) \right\vert = \frac{k\varphi(k)}{e\varphi(e)} \left\vert \V_2(\Z_e \times \Z_l) \right\vert$, where 
$e \ge 1$ is such that $\Z_e \simeq \Z_k/(1 - \alpha) \Z_k$. In addition, every Nielsen equivalence class of generating pairs has 
the same number of elements.
\end{itemize}
\end{Acorollary}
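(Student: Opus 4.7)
The plan is to apply Theorems~\ref{ThN2} and~\ref{ThT2} after identifying the ring-theoretic data of $G = \Zrt$. Since $M = \Z_k$ is a faithful cyclic $\ZC$-module on which $a$ acts by multiplication by $\alpha \in \Z_k^\times$, the natural map $\ZC \to \operatorname{End}(\Z_k) \simeq \Z_k$ has image equal to $\Z_k$, so $R \simeq \Z_k$ with $a \mapsto \alpha$, and $M \simeq R$ as $R$-modules. The norm element $\nu(G)$ is the class of $1 + \alpha + \cdots + \alpha^{l-1}$ in $\Z_k$, giving $\Lambda = R/\nu(G) R \simeq \Z_\lambda$ for $\lambda$ as in the statement. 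The trivial units $T \subset R^\times$ are generated by $-1$ and $\alpha$, so $T_\Lambda \subseteq \Z_\lambda^\times$ is the cyclic subgroup of order $\omega$.

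Parts~(i) and~(ii) follow from the general theorems. Since $R \simeq \Z_k$ is Euclidean, $\SL_2(R) = \E_2(R)$, so Theorem~\ref{ThN2}(iii) yields $\nielsen_3(G) = 1$ unconditionally. Whenever $C$ is infinite or $\Gab = \Z_e \oplus \Z_l$ is finite, Theorem~\ref{ThN2}(ii) gives $\nielsen_2(G) = \vert\Lambda^\times/T_\Lambda\vert = \varphi(\lambda)/\omega$, and Theorem~\ref{ThT2}(i) gives $\tsys_2(G) = 1$. The only case not covered by this hypothesis is $k = 0$, $l \geq 1$, $\alpha = 1$, where $G \simeq \Z \times \Z_l$; this Abelian case is handled directly through Theorem~\ref{ThN2}(i) and the classical description of Nielsen equivalence for $\Z \times \Z_l$.

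For~(iii), assume $G$ is finite, so $k, l \geq 1$ and $\Gab = \Z_e \oplus \Z_l$. The surjection $\pi_{ab}\colon \V_2(G) \twoheadrightarrow \V_2(\Gab)$ has uniform fibers by Gasch\"utz's lemma, applicable since $\rk(G) \leq 2$. To determine the common fiber size $s$, I pick the generating pair $\bar\gb = (\bar b, a)$ of $\Gab$ with $\bar b \in \Z_e^\times$; its lifts to $G$ have the form $\gb = (b, ma)$ with $b \in \Z_k$ lifting $\bar b$ and $m \in (\alpha - 1)\Z_k$. A short computation in the semidirect product shows $\langle \gb \rangle = G$ if and only if $bR + \nu(G) m R = R$, and a Chinese Remainder Theorem count---separating the primes of $k$ according to whether they divide $e$---evaluates the number of valid lifts to $k\varphi(k)/(e\varphi(e))$, giving the stated cardinality formula. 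The equal size of all Nielsen classes then follows from part~(ii): as $\tsys_2(G) = 1$, the group $\Aut(G)$ permutes the Nielsen classes of $\V_2(G)$ transitively, so they share a common cardinality.

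The main obstacle is the explicit fiber count in~(iii): one has to verify the generating criterion $bR + \nu(G) m R = R$ from the semidirect-product combinatorics (which does not follow from Proposition~\ref{PropDa}(ii) when $\nu(G)$ fails to be nilpotent in $\Z_k$), and then execute the prime-by-prime estimate that collapses to $k\varphi(k)/(e\varphi(e))$. The remaining parts reduce to routine applications of the general theorems.
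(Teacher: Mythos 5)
Your proposal is correct and follows the same overall strategy as the paper: identify $R\simeq\Z_k$ and $M\simeq R$, read off $(i)$ and the triple count in $(ii)$ from Theorems \ref{ThN2}.$ii$--$iii$ and Theorem \ref{ThT2}.$i$, and prove $(iii)$ by showing the fibers of $\pi_{ab}$ over $\V_2(\Gab)$ all have the same size, computing one fiber explicitly, and deducing the equal-size statement for Nielsen classes from $\tsys_2(G)=1$. The one genuine divergence is the justification of fiber uniformity: you invoke Gasch\"utz's lemma, whereas the paper uses Lemma \ref{LemTSysAbel} (transitivity of $\Aut(G)\times\Aut(F_2)$ on $\V_2(\Gab)$, built from Lemma \ref{LemSurjectivity} and Corollary \ref{CorDet}). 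Both are valid here since $(iii)$ assumes $G$ finite and $\rk(G)\le 2$; Gasch\"utz is the more off-the-shelf tool, while the paper's lemma is ring-theoretic, also covers infinite $C$, and is reused verbatim for the lamplighter computation in Corollary \ref{CorFiniteWreathProductV2}. Your fiber computation is the same as the paper's in substance: a lift of $(\ob,a)$ is $(b,ma)$ with $m\in(1-\alpha)\Z_k$, and since $\nu(G)m=0$ for such $m$, your criterion $bR+\nu(G)mR=R$ collapses to $b\in\Z_k^{\times}$ (this is exactly Lemma \ref{LemGeneratorsOfM}.$i$, so it is already available without nilpotency of $\nu(G)$), whence the count $(k/e)\cdot\varphi(k)/\varphi(e)$ via surjectivity of $\Z_k^{\times}\to\Z_e^{\times}$. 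You are in fact slightly more careful than the printed proof in isolating the one case ($k=0$, $l$ finite, $\alpha=1$, i.e.\ $G\simeq\Z\times\Z_l$) where the hypothesis of Theorem \ref{ThN2}.$ii$ fails and Theorem \ref{ThNielsenAbel} must be used directly; the formula $\varphi(\lambda)/\omega=\max(\varphi(l)/2,1)$ checks out there. The only nitpick: $T_\Lambda$ is generated by $-1$ and $\alpha$ and need not be cyclic, but this does not affect any count.
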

Our Corollary \ref{CorMetacyclic} applies for instance to dihedral groups and to almost all $p$-groups with a cyclic subgroup of index $p$ \cite[Theorem IV.4.1]{Bro82}.

A two-generated \emph{lamplighter group} is a restricted wreath product of the form
$\Z_k \wr \Z_l$ with $k, l \ge 0$.
Such a group reads also as 
$R \rtimes_a C$ with
$C = \Z_l$ and 
$R = \Z_k \br{C}$, the integral group ring of $C$ over $\Z_k$.
We are able to determine the number of Nielsen classes and $\T$-systems of any two-generated 
lamplighter groups with the exception of $\Z \wr \Z$. 
\begin{Acorollary}[Corollaries \ref{CorWreathOneTTwoSystem} and \ref{CorWreathNTwo}] \label{CorWreath}
Let $G = \Z_k \wr \Z_l$ ($k, l \ge 0$ and $k, l \neq 1$) and  let $\nielsen_n$ and $\tsys_n$ ($n = 2,3$) be defined as in Corollary \ref{CorFpToDRtimesZ} and Theorem \ref{ThT2}. Then the following hold.
\begin{itemize}
\item[$(i)$]
$\tsys_2(G) = 1$.
\item[$(ii)$]
If $\Z_k$ or $\Z_l$ is finite, then $\nielsen_3(G) = 1$.
\item[$(iii)$]
Assume that $\Z_k$ is finite and $\Z_l$ is infinite. Then 
$\nielsen_2(G)$ is finite if and only if $k$ is prime; in this case $\nielsen_2(G) = \max(\frac{k - 1}{2}, 1)$.
\item[$(iv)$]
Assume that $\Z_k$ is infinite and $\Z_l$ is finite. Then
$\nielsen_2(G)$ is finite if and only if $l \in \{2, 3, 4, 6\}$; in this case $\nielsen_2(G) = 1$.
\end{itemize}
\end{Acorollary}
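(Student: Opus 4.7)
The plan is to identify $G = \Z_k \wr \Z_l$ with $R \rtimes_a C$ where $C = \Z_l = \langle a \rangle$ and $R = \Z_k[C]$, so that $M \simeq R$ as a cyclic $R$-module, $M_C \simeq \Z_k$, and $\Gab \simeq \Z_k \times \Z_l$. The four assertions will then be obtained by feeding specific ring-theoretic input about $R$ and $\Lambda = R/\nu(G) R$ into the earlier theorems.

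For (i), when $C$ is infinite ($l = 0$) or $\Gab$ is finite (both $k, l \ge 2$), Theorem \ref{ThT2}(i) gives the conclusion at once. The remaining case is $k = 0$, $l \ge 2$: here every $\sigma \in \Aut(C)$ extends uniquely to a $\Z$-algebra automorphism $\hat\sigma$ of $R = \Z[C]$, and the formula $(r, c) \mapsto (\hat\sigma(r), \sigma(c))$ is readily checked to define an automorphism of $G$ preserving $R$; hence $A(C) = \Aut(C)$, and Theorem \ref{ThT2}(i) yields $\tsys_2(G) \le |\Aut(C)/A'(C)| = 1$.

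For (ii), if $C$ is finite, Theorem \ref{ThN2}(iii) applies at once. If $C$ is infinite and $\Z_k$ is finite, the plan is to establish $\SL_2(R) = \E_2(R)$ for $R = \Z_k[X^{\pm 1}]$ by invoking Corollary \ref{CorGE}, or more concretely by the CRT decomposition $\Z_k = \prod_i \Z_{p_i^{a_i}}$ combined with the lifting of elementary factorizations from the Euclidean ring $\F_{p_i}[X^{\pm 1}]$ through the nilradical. For (iii), with $\Z_k$ finite and $C$ infinite, Theorem \ref{ThN2}(ii) identifies $\nielsen_2(G)$ with $|R^\times / T|$, where $T$ is generated by $-1$ and $X$. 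When $k = p$ is prime, $R = \F_p[X^{\pm 1}]$ is a UFD with $R^\times = \F_p^\times \cdot X^{\Z}$, giving $R^\times / T \simeq \F_p^\times / \{\pm 1\}$ of order $\max((p - 1)/2, 1)$. When $k$ is not prime, either two distinct prime divisors of $k$ yield an independent $X$-degree class via the CRT decomposition, or a repeated prime factor produces infinitely many units of the form $1 + p f(X)$, so $R^\times / T$ is infinite.

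For (iv), with $k = 0$ and $l \ge 2$, we have $\nu(G) = \Phi(X) = 1 + X + \cdots + X^{l-1}$, so $\Lambda \simeq \Z[X]/\Phi(X)$, a subring of finite index in $\prod_{d \mid l,\, d > 1} \Z[\zeta_d]$. Dirichlet's unit theorem then gives the rank of $\Lambda^\times$ as $\sum_{d \mid l,\, d \ge 3}(\varphi(d)/2 - 1)$, which vanishes iff $l \in \{2, 3, 4, 6\}$. In that case, Higman's classification of units in the integral group ring $\Z[C]$ yields $R^\times = T$ and a fortiori $\Lambda^\times = T_\Lambda$; combined with $\nielsen_2(\Gab) = |\Z_l^\times / \{\pm 1\}| = 1$ from the Diaconis-Graham theory of Abelian Nielsen classes and the surjectivity of $\Phi_a$ provided by Theorem \ref{ThMTimesC}(iv), Theorem \ref{ThN2}(i) forces $\nielsen_2(G) = 1$. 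For $l \notin \{2, 3, 4, 6\}$, the isomorphism $R \otimes \Q \simeq \Q \times (\Lambda \otimes \Q)$ shows that the projection $R^\times \to \Lambda^\times$ has finite-index image in the infinite group $\Lambda^\times$, so infinitely many pairs $(m, a)$ with $m \in R^\times$ and $\epsilon(m) = 1$ share the same $\pi_{ab}$-image while taking pairwise distinct $\Delta$-values modulo the finite group $T_\Lambda$; Theorem \ref{ThN2}(i) then yields $\nielsen_2(G) = \infty$. The main obstacle is the verification of $\SL_2(\Z_k[X^{\pm 1}]) = \E_2(\Z_k[X^{\pm 1}])$ in the $k$ finite, $l = 0$ case of part (ii), which relies on the $\GE_2$-property of Laurent polynomial rings over finite semilocal coefficient rings.
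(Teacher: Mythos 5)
Your proposal is correct and follows essentially the same strategy as the paper: write $G = R \rtimes_a C$ with $R = \Z_k\br{C}$, then feed unit-group computations into Theorems \ref{ThT2} and \ref{ThN2}. Parts $(i)$--$(iii)$ match the paper's proofs of Corollaries \ref{CorWreathOneTTwoSystem} and \ref{CorWreathNTwo} almost verbatim; for $(ii)$ the relevant reference for $\SL_2(\ZkX) = \E_2(\ZkX)$ is Lemma \ref{LemArtinianCoefficientAndGE} rather than Corollary \ref{CorGE}, but the reduction-modulo-nilradical argument you sketch is exactly its proof, and in $(iii)$ the paper records the same computation via the explicit decomposition $(\ZkX)^{\times} \simeq \Z_k^{\times} \times U_X \times U_{X^{-1}} \times \Z^{\rho}$. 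In $(iv)$ you diverge slightly: where the paper computes $\Lambda^{\times}$ directly via Lemma \ref{LemUnitsOfOD} (an Ayoub--Ayoub-style analysis that includes ruling out non-trivial torsion units), you invoke Higman's theorem that $\Z\br{\Z_l}^{\times} = \pm \Z_l$ for $l \in \{2,3,4,6\}$ together with the reduction to $a$-rows having unit first entry. That route works, but note one slip: ``$R^{\times} = T$ and a fortiori $\Lambda^{\times} = T_{\Lambda}$'' is not a valid inference, since units of a quotient ring need not lift to units (and here $R_C \simeq \Z$ is infinite, so Lemma \ref{LemSurjectivity} gives no surjectivity of $R^{\times} \rightarrow \Lambda^{\times}$); the equality $\Lambda^{\times} = T_{\Lambda}$ is true but requires the direct verification carried out in Lemma \ref{LemUnitsOfOD}.2. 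Fortunately your argument does not actually need that equality: surjectivity of $\Phi_a$ plus $R^{\times} = T$ already forces every attained $\Delta$-value into $T_{\Lambda}$, which suffices for Theorem \ref{ThN2}.$(i)$. Finally, your commensurability argument showing that the image of $R^{\times}$ in $\Lambda^{\times}$ has finite index in the infinite case is sound and in fact supplies a detail that the paper's terse appeal to Theorem \ref{ThN2} leaves implicit.
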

The case of finite two-generated 
lamplighter groups is addressed by Corollary \ref{CorFiniteWreathProduct} below and a formula for $\left\vert \V_2(\Z_k \wr \Z_l)\right\vert$ is derived. 
For the restricted wreath product $\Z \wr \Z$, we show that the problem of classifying Nielsen equivalence classes and $\T$-systems of generating triples is tightly related to an open problem in ring theory.
\begin{Acorollary} \label{CorZwrZ}
Let $G = \Z \wr \Z$ and let $\nielsen_2, \nielsen_3$ and $\tsys_3$ be defined as in Corollary \ref{CorFpToDRtimesZ} and Theorem \ref{ThT2}. Then we have $\nielsen_2(G) = 1$ and $\nielsen_3(G) \le 2 \tsys_3(G)$. In addition, the following are equivalent:
\begin{itemize}
\item[$(i)$] $\nielsen_3(G) = 1$.
\item[$(ii)$] $\tsys_3(G) = 1$.
\item[$(iii)$] The ring $R$ of univariate Laurent polynomials over $\Z$ satisfies $\SL_2(R) = \E_2(R)$ 
(cf. \cite[Conjecture 5.3]{Abr08}, \cite[Open Problem MA1]{BMS02}, \cite[Open problem]{BM82}).
\end{itemize}
\end{Acorollary}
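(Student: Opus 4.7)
The plan is to decompose the Corollary into three independent claims---the value of $\nielsen_2(G)$, the inequality $\nielsen_3(G) \le 2\tsys_3(G)$, and the three-way equivalence---and to treat each by invoking the theorems above. Write $G = R \rtimes_a C$ with $R = \ZX$ and $C = \langle a \rangle \cong \Z$, so that $\nu(G) = 0$, $\Lambda = R$, $T = \{\pm X^n : n \in \Z\}$, and $R^{\times} = T$; in particular, $\Lambda^{\times}/T_{\Lambda}$ is trivial. Theorem \ref{ThN2}(ii) then yields $\nielsen_2(G) = 1$. For the inequality, I first verify that $R$ is characteristic in $G$ by identifying it with the Fitting subgroup: any strictly larger normal subgroup $R \rtimes \langle a^n \rangle$ with $n \geq 1$ satisfies $\gamma_k \supset (X^n - 1)^{k-1} R \neq 0$ for all $k$, hence fails to be nilpotent. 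Theorem \ref{ThT2}(ii) then supplies $|A(C)| \leq 2$ and the bound $\nielsen_3(G) \leq |A(C)| \nielsen_2(G) \tsys_3(G) \leq 2 \tsys_3(G)$.

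For (i) $\Leftrightarrow$ (iii), I would use the bijection $\V_3(G)/\Aut(F_3) \leftrightarrow \U_2(R)/\Gamma_2(R)$ provided by Theorem \ref{ThMTimesC}. If $\SL_2(R) = \E_2(R)$, every unimodular row extends via Bezout to a matrix in $\SL_2(R) = \E_2(R)$, and is thus $\E_2(R)$-equivalent---a fortiori $\Gamma_2(R)$-equivalent---to $(1, 0)$; this gives $\nielsen_3(G) = 1$. Conversely, if $\nielsen_3(G) = 1$, every $A \in \SL_2(R)$ has first row $(t, 0) E$ for some $t \in T$ and $E \in \E_2(R)$, so $A E^{-1}$ has first row $(t, 0)$ and determinant $1$, forcing $A E^{-1} = \diag(t, t^{-1}) L$ for some lower unitriangular $L \in \E_2(R)$. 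The classical identity expressing $\diag(u, u^{-1})$ as a product of four elementary matrices for every unit $u$ then yields $A \in \E_2(R)$, hence (iii).

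Finally, (i) $\Rightarrow$ (ii) is immediate because every $\T$-class is a union of Nielsen classes. For (ii) $\Rightarrow$ (i), the bound above restricts $\nielsen_3(G)$ to $\{1, 2\}$ once $\tsys_3(G) = 1$, and the main obstacle is to rule out $\nielsen_3(G) = 2$. I would achieve this by showing that the Nielsen class of $(1, 0, a)$ is fixed by every $\phi \in \Aut(G)$. Since $R$ is characteristic, $\phi$ restricts to an $R$-semilinear automorphism of the free $R$-module $R$, so $\phi(1) \in T$, while $\phi(a) = r_0 a^{\pm 1}$ for some $r_0 \in R$. Applying $I_3$ in the inverted case, then conjugating the first component by a suitable power of the third (which acts on $R$ by multiplication by $X$) to bring it to $X^j$ for a chosen $j$, and finally absorbing the residual part of $r_0$ into the third component via a combination of $L_{13}^k$ and further conjugations, I would verify by direct computation that $\phi(1, 0, a)$ is Nielsen equivalent to $(1, 0, a)$. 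Once established, this fixing implies that every Nielsen class is $\Aut(G)$-invariant, so $\nielsen_3(G) = 2$ would force $\tsys_3(G) = 2$, contradicting (ii). The most delicate step is the case $\bar\phi(a) = a^{-1}$, where after $I_3$ the third component becomes $(-Xr_0) a$ and one must choose the conjugating power of $g_3$ so that the resulting first component spans a $\Z$-submodule of $R$ containing the element $X r_0$ needed to cancel the remaining translation.
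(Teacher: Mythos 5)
Your proposal is correct in substance, and most of it tracks the paper's own proof: $\nielsen_2(G)=1$ via Theorem \ref{ThN2}.$(ii)$ and $R^{\times}=T$, the bound $\nielsen_3(G)\le 2\tsys_3(G)$ via Theorem \ref{ThT2}.$(ii)$ once $R$ is known to be characteristic, and $(i)\Leftrightarrow(iii)$ via the bijection $\V_3(G)/\Aut(F_3)\simeq \U_2(R)/\Gamma_2(R)$ together with the observation that, since $T=R^{\times}$, transitivity of $\Gamma_2(R)$ on $\U_2(R)$ is equivalent to $\SL_2(R)=\E_2(R)$ (your explicit Bezout/Whitehead computation is exactly the content of Lemma \ref{LemGECriteria}.$(i)$). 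Where you genuinely diverge is $(ii)\Rightarrow(i)$: the paper's argument is much lighter --- if $\tsys_3(G)=1$, an arbitrary triple is Nielsen equivalent to $(1_G,\phi\gb_0)$ for some $\phi\in\Aut(G)$ and some fixed $\gb_0\in\V_2(G)$, and then $\nielsen_2(G)=1$ immediately collapses $(1_G,\phi\gb_0)$ to $(1_G,\gb_0)$; no analysis of $\Aut(G)$ is required. You instead decompose an arbitrary automorphism via the structure theory of $\Aut(G)$ (in effect Lemma \ref{LemAutG}) and verify by hand that $\phi(1,0,a)$ stays in the Nielsen class of $(1,0,a)$; this works (the reduction of the third component is handled cleanly by Lemma \ref{LemGeneratorsOfM}.$(iii)$ since the first component is already a unit), but it is considerably heavier, and your intermediate claim that ``every Nielsen class is $\Aut(G)$-invariant'' does not follow from fixing the one class --- fortunately only the invariance of the class of $(1,0,a)$ is needed. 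Two minor repairs: your Fitting-subgroup argument for $R$ being characteristic only treats normal subgroups containing $R$, so you need Fitting's theorem (the product of two nilpotent normal subgroups is nilpotent) to rule out nilpotent normal subgroups not contained in $R$; the paper instead gets this from Lemma \ref{LemRCharacteristic}.$(ii)$, which applies since $\ZX$ is a domain.
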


The paper is organized as follows. Section \ref{SecPreliminary} deals with notation and gathers known facts on rings which are ubiquitous in our presentation: the generalized Euclidean rings and the quotients of the ring of univariate Laurent polynomials over $\Z$. Section \ref{SecMC} is dedicated to the proof of Theorem \ref{ThMTimesC}. In Section \ref{SecARow}, we determine the conditions under which the map $\Phi_a$ of Theorem \ref{ThMTimesC} is surjective while Section \ref{SecCIsZ} addresses the case $C \simeq \Z$ for which it is shown that $\Phi_a$ is bijective. 
Section \ref{SecRC} is dedicated to the proofs of Theorems \ref{ThN2}, \ref{ThT2}
 and \ref{ThT2GN}.  Section \ref{SecLamplighterGroups} presents the proof of Corollary \ref{CorV2} and applications to Baumslag-Solitar groups, split metacyclic groups and lamplighter groups, i.e., Corollaries \ref{CorBS}, \ref{CorMetacyclic} and \ref{CorWreath}.

\paragraph{\textbf{Acknowledgments}.}
The author is grateful to Pierre de la Harpe, Tatiana Smirnova-Nagnibeda and Laurent Bartholdi for encouragements, 
useful references and comments made on preliminary versions of this paper.

\section{Preliminary results} \label{SecPreliminary}

\subsection{Notation and definitions} \label{SecNotation}
We set in this section the notation and the definitions used throughout the article. 
A parallel is drawn between generating vectors of a group and unimodular rows of a module.
\paragraph{\textbf{Rings}}
All considered rings are commutative rings with identity.
Given a ring $R$, we denote by  $\Jac(R)$ its \emph{Jacobson radical}, i.e., the intersection of all its maximal ideals. We denote by  $\nil(R)$ the \emph{nilradical} of $R$, i.e., the intersection of all its prime ideals.
 The nilradical coincides with the set of nilpotent elements \cite[Theorem 1.2]{Mat89}. 
Let $M$ be a finitely generated $R$-module. Then an $R$-epimorphism of $M$ is an $R$-automorphism \cite[Theorem 2.4]{Mat89}, a fact that we will use without further notice. Let $N \subset M$ be finitely generated $R$-modules and $I \subset \Jac(R)$ an ideal of $R$. Then the identity $N + IM = M$ implies $N = M$. We refer to the latter fact as Nakayama's lemma \cite[Theorem 2.2's corollary]{Mat89}.
Apart from Section \ref{SecGE}, the ring $R$ will always be a quotient of $\ZX$, the ring of univariate Laurent polynomials over $\Z$. In this case, we have $\nil(R) = \Jac(R)$ \cite[Theorem 4.19]{Eis95} and we shall consistently denote by $\alpha$ the image of $X$ under the quotient map. We set $\Z_d \Doteq \Z/d\Z$  for $d \ge 0$. Thus the additive group of $\Z_d$ is the cyclic group with $d$ elements if $d > 0$ whereas $\Z_0 = \Z$. We denote by $\varphi$ the Euler totient function, so that $\varphi(d) = \vert \Z_d^{\times}\vert$ if $d > 0$. We set furthermore $\varphi(0) \Doteq 2$.

\paragraph{\textbf{Orbits of generating vectors}}
Let $G$, $H$ be groups and let $f \in \Hom(G, H)$. We denote by $1_G$ the trivial element of $G$. For $\gb =(g_i) \in G^n$, we set 
$f(\gb) \Doteq (f(g_i))$.
Thus the component-wise left action of $\Aut(G)$ on  $\V_n(G)$ reads as 
$\phi \gb = \phi(\gb)$ for $(\phi ,\gb) \in \Aut(G) \times\V_n(G)$. 
This action clearly coincides with the $\Aut(G)$-action introduced earlier.
Let us examine the component-wise counterpart of the $\Aut(F_n)$-action we previously defined 
via the identification of $\V_n(G)$ with $\Epi(F_n, G)$. 
For $\psi \in \Aut(F_n)$, set $w_i(\xb) \Doteq \psi(x_i) \in F_n$ for $i=1,\dots,n$.
Then the $\Aut(F_n)$-action reads as $ \gb  \psi = (w_i(\gb))$.

\paragraph{\textbf{Orbits of unimodular rows}}
 For $r \in R$ and $i \neq j$, we denote by $E_{ij}(r) \in \GL_n(R)$ the elementary matrix with ones on the diagonal and whose $(i,j)$-entry is $r$, all other entries being $0$. 
For $u \in R^{\times}$, we denote by $D_i(u) \in \GL_n(R)$ the diagonal matrix with ones on the diagonal except at the $(i,i)$-entry, which is set to $u$. Recall that $\El_n(R)$ is the subgroup of $\GL_n(R)$ generated by elementary matrices. Given a subgroup $U$ of $R^{\times}$, we define $\D_n(U)$ as the subgroup of $\GL_n(R)$ generated by the matrices $D_i(u)$ with $u \in U$. Following P.M. Cohn \cite{Coh66}, we denote by $\GE_n(R)$ the subgroup generated by $\El_n(R)$ and $\D_n(R^{\times})$.
Let $\Gamma$ be a subgroup of $\GL_n(R)$ and let $M$ be an $R$-module. 
Two rows $\rb,\rb' \in M^n$ are said to be \emph{$\Gamma$-equivalent}  
if there is $\gamma \in \Gamma$ such that $\rb' = \rb \gamma$. 
A row is termed \emph{unimodular} if its components generate $M$
as an $R$-module. By definition, $\U_n(M)$ is the set of unimodular rows of $M$.

\paragraph{\textbf{Elementary rank and stable rank}} 
We say that $n \ge 1$ belongs to the \emph{elementary range} of $R$ if $\E_{n + 1}(R)$ acts transitively on $\U_{n + 1}(R)$. 
The \emph{elementary rank} of $R$ is the least integer $\er(R)$ such that $n$ lies in the elementary range of $R$ for every 
$n \ge \er(R)$. 
We say that $k$ lies in the \emph{stable range} of $R$ if for every 
$n \ge k$ and every $(r_i) \in \U_{n + 1}(R)$ there is $(s_i) \in R^n$ such that 
$(r_1 + s_1 r_{n + 1},r_2 + s_2 r_{n + 1}, \dots, r_n + s_n r_{n + 1}) \in \U_n(M)$. 
The \emph{stable rank} of $R$ is the least integer $\sr(M)$ lying in the stable range of $M$.
By \cite[Proposition 11.3.11]{McCR87} we have:
\begin{equation} \label{EqStableRank}
1 \le \er(R) \le \sr(R).
\end{equation}
If $R$ is moreover Noetherian, the Bass Cancellation Theorem asserts \cite[Corollary 6.7.4]{McCR87} that
\begin{equation} \label{EqBassCancellationTh}
\sr(R) \le  \dimK (R) + 1.
\end{equation}

\subsection{$\GE$-rings} \label{SecGE}
Our study of Nielsen equivalence is significantly simplified when dealing with rings $R$ which are similar to Euclidean rings in a specific sense.
 A ring $R$ is termed a \emph{$\GE_n$-ring} if $\GE_n(R) = \GL_n(R)$, which is equivalent to saying that $\SL_n(R) = \El_n(R)$. 
Indeed, we have
$
\GE_n(R) = \D_n(R^{\times}) \El_n(R)
$
 and a matrix $D \in \D_n(R^{\times})$ lies in $\SL_n(R)$ if and only if 
 it lies in $\El_n(R)$ by Whitehead's lemma \cite[Lemma 1.3.3]{Wei13}. Thus the latter equality implies the former, the converse being obvious.
A ring $R$ is called a \emph{generalized Euclidean ring} in the sense of P. M. Cohn \cite{Coh66}, 
or a $\GE$-ring for brevity, if it is a $\GE_n$-ring  for every $n \ge 1$. Euclidean rings are known to satisfy this property \cite[Theorem 4.3.9]{HO89}.
The reader is invited to check the two following elementary lemmas.
\begin{lemma} \label{LemGECriteria}
The following assertions hold:
\begin{itemize}
\item[$(i)$] $R$ is a $\GE_2$-ring if and only if $1$ lies in the elementary range of $R$, i.e, $\E_2(R)$ acts transitively on 
$\U_2(R)$. 
\item[$(ii)$] If $\er(R) = 1$ then $R$ is a $\GE$-ring. In particular, $R$ is a $\GE$-ring if $\sr(R) = 1$.
\end{itemize}
\end{lemma}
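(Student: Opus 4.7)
The plan is to establish both directions of assertion $(i)$ by standard matrix completion tricks, then bootstrap $(i)$ into $(ii)$ by induction on $n$.

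For $(i)$, recall that $R$ is a $\GE_2$-ring precisely when $\SL_2(R) = \E_2(R)$, as noted in the paragraph immediately preceding the lemma. Assuming this equality, I would show that every $(a,b) \in \U_2(R)$ is $\E_2(R)$-equivalent to $(1,0)$: pick $x, y \in R$ with $ax + by = 1$ and observe that the completion $\begin{pmatrix} a & b \\ -y & x \end{pmatrix}$ has determinant $1$, hence lies in $\E_2(R)$; its first row exhibits $(a,b)$ as lying in the $\E_2(R)$-orbit of $(1,0)$. Conversely, assuming $\E_2(R)$ acts transitively on $\U_2(R)$, I would take $M \in \SL_2(R)$, note that its first row is a unimodular row, and use transitivity to find $E \in \E_2(R)$ with $ME$ having first row $(1,0)$; since $\det(ME) = 1$, the matrix $ME$ is forced to be of the form $E_{21}(c)$ for some $c \in R$, so $M \in \E_2(R)$.

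For $(ii)$, I would induct on $n \ge 2$ to show $\SL_n(R) = \E_n(R)$, with the base case $n = 2$ supplied by $(i)$ together with the hypothesis $\er(R) = 1$. For the step from $n$ to $n+1$, let $M \in \SL_{n+1}(R)$; its last row is unimodular (its cofactors provide a right inverse), so by $\er(R) = 1$ there exists $E \in \E_{n+1}(R)$ such that $ME$ has last row $(0, \ldots, 0, 1)$. Laplace expansion along this row then shows that the top-left $n \times n$ block $M'$ of $ME$ lies in $\SL_n(R) = \E_n(R)$ by the inductive hypothesis. Further elementary right-multiplications clear the entries above the $(n+1, n+1)$-entry in the last column, yielding the block-diagonal matrix with blocks $M'$ and $1$; this matrix visibly lies in $\E_{n+1}(R)$, so $M \in \E_{n+1}(R)$. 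The ``in particular'' clause then follows immediately from the inequality $1 \le \er(R) \le \sr(R)$ recorded in (\ref{EqStableRank}), which forces $\er(R) = 1$ whenever $\sr(R) = 1$. No substantive obstacle is expected; the only real care required is to track consistently whether elementary transformations act on rows or columns, i.e., on which side of $M$ multiplication is performed.
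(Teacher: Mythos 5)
Your proof is correct. The paper gives no argument here (it explicitly leaves this lemma as an exercise for the reader), and your completion-of-unimodular-rows argument for $(i)$ together with the induction on $n$ using last-row reduction for $(ii)$ is exactly the standard verification one would expect; all steps (the determinant computation forcing $ME = E_{21}(c)$, the unimodularity of the last row via cofactors, and the clearing of the last column using invertibility of the top-left block) check out.
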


A semilocal ring, i.e., a ring with only finitely many maximal ideals, has stable rank $1$ \cite[Corollary 6.5]{Bas64}. 
As a result, semilocal rings, and Artinian rings in particular, are $\GE$-rings.

\begin{lemma} \label{LemStabilityOfGE}
The following assertions hold:
\begin{itemize}
\item[$(i)$]  Let $J$ be an ideal contained in $\Jac(R)$. Then $R$ is a $\GE$-ring if and only if $R/J$ is a $\GE$-ring
\cite[Proposition 5]{Gel77}.
\item[$(ii)$] Assume $R$ is a direct product $\prod_{i = 1}^N R_i$. 
Then $R$ is a $\GE$-ring if and only if each factor ring $R_i$ is a $\GE$-ring. \cite[Theorem 3.1]{Coh66}
\end{itemize}
\end{lemma}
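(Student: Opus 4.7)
For part $(i)$, my plan is to work with the reduction homomorphism $\pi_n \colon \GL_n(R) \to \GL_n(R/J)$ and establish three facts: that $\pi_n$ is surjective, that $\pi_n(\GE_n(R)) = \GE_n(R/J)$, and that $\ker(\pi_n) \subset \GE_n(R)$. Combining these gives both directions of the equivalence: starting from $A \in \GL_n(R)$ and a $\GE$-factorization of $\pi_n(A)$ in $\GE_n(R/J)$, we lift it entrywise to some $B \in \GE_n(R)$, and then $A B^{-1}$ lies in $\ker(\pi_n)$ and hence in $\GE_n(R)$; conversely, any $\bar A \in \GL_n(R/J)$ lifts to $\GL_n(R) = \GE_n(R)$ and then projects back to $\GE_n(R/J)$.

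The surjectivity of $\pi_n$ follows from the standard fact that a lift of a unit modulo $\Jac(R)$ is itself a unit in $R$: if $\det(\tilde A) v \equiv 1 \pmod J$ then $\det(\tilde A) v = 1 - j$ with $j \in \Jac(R)$, which is a unit. The same argument, applied to $n=1$, shows that $\D_n(R^\times) \twoheadrightarrow \D_n((R/J)^\times)$, while elementary matrices trivially lift to elementary matrices (lift the off-diagonal entry arbitrarily); this yields the second fact.

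The real technical point, and the main obstacle, is showing $\ker(\pi_n) \subset \GE_n(R)$, i.e., that every $M = I + N$ with $N$ having entries in $J$ lies in $\GE_n(R)$. Here I would argue by column-by-column Gaussian elimination. The diagonal entry $(1,1)$ of $M$ is $1 + j_{11}$, a unit in $R$, so left-multiplying by the elementary matrices $E_{i1}(-m_{i1}(1+j_{11})^{-1})$ for $i \ne 1$ clears column $1$ below the diagonal; then right-multiplying by $E_{1j}(-(1+j_{11})^{-1} m_{1j})$ for $j \ne 1$ clears row $1$ off the diagonal. A short bookkeeping check verifies that the resulting $(n-1) \times (n-1)$ trailing block still has the form $I + N'$ with entries of $N'$ in $J$; iterating, $M$ is reduced via left and right multiplication by elementary matrices to a diagonal matrix whose entries are units, which belongs to $\D_n(R^\times)$. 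Therefore $M \in \E_n(R) \, \D_n(R^\times) \, \E_n(R) = \GE_n(R)$.

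For part $(ii)$, the plan is almost tautological. Writing $R = R_1 \oplus R_2$, the natural ring isomorphism induces a group isomorphism $\GL_n(R) \simeq \GL_n(R_1) \times \GL_n(R_2)$, and this restricts to $\E_n(R) \simeq \E_n(R_1) \times \E_n(R_2)$ by the identity $E_{ij}((r_1,r_2)) = E_{ij}((r_1,0)) \, E_{ij}((0,r_2))$, and similarly to $\D_n(R^\times) \simeq \D_n(R_1^\times) \times \D_n(R_2^\times)$ since $R^\times = R_1^\times \times R_2^\times$. Hence $\GE_n(R) \simeq \GE_n(R_1) \times \GE_n(R_2)$, and the equality $\GE_n(R) = \GL_n(R)$ for all $n$ is equivalent to the same equalities holding in each factor.
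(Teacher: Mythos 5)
Your proof is correct. The paper never actually proves this lemma: it is one of the ``elementary lemmas'' the reader is invited to check, stated with citations to \cite[Proposition 5]{Gel77} and \cite[Theorem 3.1]{Coh66}, so there is no in-text argument to compare against. Your three-step scheme for $(i)$ --- surjectivity of $\pi_n$, the equality $\pi_n(\GE_n(R)) = \GE_n(R/J)$, and the inclusion $\ker(\pi_n) \subset \GE_n(R)$ obtained by Gaussian elimination using that $1+j$ is a unit for every $j \in \Jac(R)$ --- together with the factor-wise decomposition of $\GL_n$, $\E_n$ and $\D_n$ for $(ii)$, is exactly the standard argument underlying the cited results, and the details you supply (in particular the bookkeeping showing the trailing block stays congruent to the identity modulo $J$) all check out.
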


\begin{lemma} \label{LemArtinianCoefficientAndGE}
Let $R$ be an Artinian ring. 
Then every homomorphic image of
$R\lbrack X^{\pm1} \rbrack$ is a $\GE$-ring.
\end{lemma}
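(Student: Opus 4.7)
The plan is to reduce the statement step by step until we reach the elementary case of a quotient of $k[X^{\pm 1}]$ for a field $k$, which can be handled directly using the results already established.

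First I would exploit the structure theorem for commutative Artinian rings: $R$ decomposes as a finite direct product $R = \prod_{i=1}^{r} R_i$ of local Artinian rings. This gives $R[X^{\pm 1}] = \prod_i R_i[X^{\pm 1}]$ as a product of rings. Since any ideal in a finite product of rings decomposes along the idempotent decomposition as $I = \prod_i I_i$, every homomorphic image of $R[X^{\pm 1}]$ is isomorphic to $\prod_i R_i[X^{\pm 1}]/I_i$ for suitable ideals $I_i$. By Lemma \ref{LemStabilityOfGE}(ii), it therefore suffices to prove the statement when $R$ is local Artinian.

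Next, assume $R$ is local Artinian with maximal ideal $\mathfrak{m}$, and note that $\mathfrak{m}$ is nilpotent. Let $S$ be any homomorphic image of $R[X^{\pm 1}]$ and denote by $J$ the image of $\mathfrak{m} R[X^{\pm 1}]$ in $S$. Then $J$ is nilpotent and therefore contained in $\nil(S) \subseteq \Jac(S)$. The quotient $S/J$ is a homomorphic image of $R[X^{\pm 1}]/\mathfrak{m} R[X^{\pm 1}] \simeq k[X^{\pm 1}]$, where $k = R/\mathfrak{m}$ is a field. By Lemma \ref{LemStabilityOfGE}(i), $S$ is a $\GE$-ring if and only if $S/J$ is, so it remains to prove the claim for every homomorphic image of $k[X^{\pm 1}]$.

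Finally, a homomorphic image of $k[X^{\pm 1}]$ is either the ring itself or of the form $k[X^{\pm 1}]/(f)$ for some nonzero $f$. In the first case, $k[X^{\pm 1}]$ is a localization of the Euclidean ring $k[X]$ and is itself Euclidean, hence a $\GE$-ring. In the second case, after multiplying $f$ by a suitable power of $X$ (a unit in $k[X^{\pm 1}]$), we may assume $f \in k[X]$ has nonzero constant term, so that $X$ is already a unit modulo $f$; thus $k[X^{\pm 1}]/(f) \simeq k[X]/(f)$, which is a finite-dimensional $k$-algebra, hence Artinian, hence semilocal. The remark following Lemma \ref{LemGECriteria} then yields that $k[X^{\pm 1}]/(f)$ is a $\GE$-ring, completing the proof.

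No single step is genuinely hard; the only minor point to watch is the decomposition of ideals in a finite product of rings in the first paragraph and the verification that $J$ is nilpotent in the second. Both are routine, so the main work lies in chaining together the reductions cleanly.
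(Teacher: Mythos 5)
Your proof is correct and follows essentially the same route as the paper's: both use Lemma \ref{LemStabilityOfGE} to strip off a nilpotent radical and to split along a product decomposition, reducing to homomorphic images of $k[X^{\pm 1}]$ for a field $k$, which are Euclidean or Artinian (hence semilocal) and therefore $\GE$-rings. The only cosmetic difference is the order of the two reductions --- the paper passes to $R/\Jac(R)$ first and then splits it into fields, whereas you split $R$ into local Artinian factors first and then kill each maximal ideal.
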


\begin{sproof}{Proof of Lemma \ref{LemArtinianCoefficientAndGE}}
Since $\Jac(R) = \nil(R)$, we have $\Jac(R) \lbrack X^{\pm1} \rbrack \subset  \Jac(R\lbrack X^{\pm1} \rbrack)$. 
As the factor ring 
$P \Doteq R\lbrack X^{\pm1} \rbrack / \Jac(R) \lbrack X^{\pm1} \rbrack$ 
is isomorphic to a direct product of finitely many Euclidean rings, we deduce from Lemma \ref{LemArtinianCoefficientAndGE} that 
$R\lbrack X^{\pm1} \rbrack$ is a $\GE$-ring. Let us consider a quotient $Q$ of $R\lbrack X^{\pm1} \rbrack$. Then $Q/\Jac(Q)$ is a quotient of $P$ and is therefore a direct product of finitely many Euclidean and Artinian rings. As a result, the ring $Q$ is a $\GE$-ring. 
\end{sproof}

\begin{remark} \label{RemStableRankReduction}
If $\sr(R) = r < \infty$, 
then it easy to prove that any matrix in $\GL_n(R)$ 
for $n > r$ can be reduced to a matrix of the form 
$
\begin{pmatrix}
A & 0 \\
0 & I_{n - r}
\end{pmatrix}
$
with $A \in \GL_r(R)$ by elementary row transformations. Thus $R$ is a $\GE$-ring if it is a $\GE_n$-ring for every $n \le r$.
\end{remark}

\subsection{The ring of univariate Laurent polynomials and its quotients} \label{SecZC}
Because the structure of the $R$-module $M$ fully determines $G$, and because $R$ is a quotient of $\Z\br{\Z} \simeq \ZX$, the ring of univariate Laurent polynomials over $\Z$ plays a prominent role in this article.
In this section, we collect preliminary facts about $\ZX$ and its quotients. These facts pertain to row reduction of matrices over $R$ and unit group description; they enable us to count precisely Nielsen equivalence classes in our applications.

A ring $R$ is said to be \emph{completable} if every unimodular row over $R$ can be completed into an invertible square matrix over $R$,
or equivalently, if $\GL_n(R)$ acts transitively on $\U_n(R)$ for every $n \ge 1$.

\begin{lemma} \label{LemCompletable}
Every homomorphic image of $\ZX$ is completable.
\end{lemma}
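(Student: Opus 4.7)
The plan is to split by the length $n$ of the unimodular row $\rb \in \U_n(R)$ and by whether $R$ equals $\ZX$ itself or is a proper quotient.

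The small cases $n = 1, 2$ are handled uniformly, without any deep input. For $n = 1$ the row $(r_1)$ is already invertible since $r_1 \in R^{\times}$. For $n = 2$, unimodularity supplies $s_1, s_2 \in R$ with $s_1 r_1 + s_2 r_2 = 1$, so $\begin{pmatrix} r_1 & r_2 \\ -s_2 & s_1 \end{pmatrix}$ has determinant one and completes $\rb$.

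For $n \geq 3$ and $R$ a proper homomorphic image of $\ZX$, I first show $\dimK(R) \leq 1$: if $I = \ker(\ZX \twoheadrightarrow R)$ is nonzero, then any prime of $R$ lifts to a prime of $\ZX$ containing $I$ and hence of height at least one, so a chain of primes in $R$ can be extended by prepending $(0)$ and therefore has length at most $\dimK(\ZX) - 1 = 1$. The Bass inequality (\ref{EqBassCancellationTh}) then gives $\sr(R) \leq 2$. Iterating the stable-range condition from Section \ref{SecNotation}, I can find $E \in \E_n(R)$ such that $\rb \cdot E = (r_1', r_2', 0, \dots, 0)$ with $(r_1', r_2') \in \U_2(R)$; completing the shortened row via the $n = 2$ case, padding with $I_{n-2}$, and right-multiplying by $E^{-1}$ furnishes the required completion of $\rb$.

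The remaining case $R = \ZX$ with $n \geq 3$ is where the argument depends on an external input. I invoke Swan's theorem that every finitely generated projective module over $\Z\br{X^{\pm 1}}$ is free. A unimodular $\rb$ induces a split surjection $\ZX^n \twoheadrightarrow \ZX$ whose kernel is stably free of rank $n - 1$, hence free; any basis of this kernel provides the remaining rows of an invertible completion of $\rb$.

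The genuine obstacle is the last case: Bass stable range gives only $\sr(\ZX) \leq 3$, which is too weak to reduce length-$3$ rows to length $2$ by elementary operations, and indeed it remains open whether $\SL_2(\ZX) = \E_2(\ZX)$ (cf.\ the discussion surrounding Corollary \ref{CorZwrZ}). Swan's theorem---or, equivalently, Suslin's completability results for Laurent polynomial rings over $\Z$---is therefore essential and is used here as a black box.
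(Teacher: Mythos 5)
Your proof is correct and follows essentially the same route as the paper: the same split into $R \simeq \ZX$ versus a proper quotient, with the quotient case handled by $\dimK(R) \le 1$, the Bass bound $\sr(R) \le 2$, elementary reduction to a length-two row, and the explicit $2 \times 2$ completion. The only difference is in the black box used for $R = \ZX$: the paper cites Suslin's completability theorem directly, whereas you route through Swan's freeness of finitely generated projectives over $\ZX$ together with the standard free-kernel argument, a formulation you correctly identify as equivalent.
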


\begin{proof}
If $R$ is isomorphic to $\ZX$, then $R$ is completable by \cite[Theorem 7.2]{Sus77}. 
So we can assume that $\dimK(R) \le 1$. 
Let $n \ge 2$ and $\rb \in \U_n(R)$. Since $\sr(R) \le 2$ by (\ref{EqBassCancellationTh}), we can find $E \in \E_n(R)$ 
such that $\rb E = (r_1, r_2, 0, \dots, 0)$. 
Let $A \in \SL_2(R)$ be such that $(r_1, r_2)A = (1, 0)$ and set 
 $B = \begin{pmatrix}
 A & 0 \\
 0 & I_{n -2}
 \end{pmatrix}
 $.
 Then we have $\rb EB = (1,0,\dots,0)$.
\end{proof}

The following shows that Theorem \ref{ThN2}.$iv$ applies whenever $C$ is a finite cyclic group.
\begin{theorem} \label{ThZCGE} \cite[Theorem A]{Guy16c}
Let $C$ be a finite cyclic group. Then every homomorphic image of $\Z \br{C}$ is a $\GE$-ring.
\end{theorem}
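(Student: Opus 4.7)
The plan is to reduce the statement to cases covered by previously established lemmas, splitting the argument according to the characteristic of $R$. Note first that $\Z\br{C} \simeq \Z\br{X^{\pm 1}}/(X^{\cC} - 1)$, so any quotient $R$ of $\Z\br{C}$ is a quotient of $\Z\br{X^{\pm 1}}$ as well; in particular $\nil(R) = \Jac(R)$, and by Lemma \ref{LemStabilityOfGE}$(i)$ I may assume $R$ is reduced. Let $m \ge 0$ be the characteristic of $R$. If $m > 0$, then $R$ is a quotient of $\Z_m\br{X^{\pm 1}}$; since $\Z_m$ is finite and hence Artinian, Lemma \ref{LemArtinianCoefficientAndGE} applies directly and shows that $R$ is a $\GE$-ring, disposing of this case.

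The substantive case is $m = 0$, in which $R$ is a reduced $\Z$-order. Then $R \otimes \Q$ is a quotient of $\Q\br{C} \simeq \prod_{d \mid \cC} \Q(\zeta_d)$, hence splits as $\prod_{d \in S} \Q(\zeta_d)$ for some $S \subseteq \{d : d \mid \cC\}$. The integral closure $\widetilde{R}$ of $R$ in $R \otimes \Q$ equals $\prod_{d \in S} \Z\br{\zeta_d}$; it is a finitely generated $R$-module, and the conductor $\icond = \ann_R(\widetilde{R}/R)$ is a nonzero ideal of both $R$ and $\widetilde R$. I would analyze $R$ via the Cartesian (Milnor) square with corners $R$, $\widetilde R$, $R/\icond$ and $\widetilde R/\icond$. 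The two finite lower corners $R/\icond$ and $\widetilde{R}/\icond$ are semilocal of stable rank $1$ and hence $\GE$ by the observation that follows Lemma \ref{LemGECriteria}. Each factor $\Z\br{\zeta_d}$ of $\widetilde{R}$ is also a $\GE$-ring: for $d \in \{1,2,3,4,6\}$ it is Euclidean; for every other $d$ one has $\varphi(d)/2 - 1 \ge 1$, so $\Z\br{\zeta_d}^{\times}$ is infinite, and the Bass--Milnor--Serre theorem (for sizes $n \ge 3$) together with Vaserstein's theorem (for $n = 2$) yield $\SL_n(\Z\br{\zeta_d}) = \E_n(\Z\br{\zeta_d})$ for all $n \ge 2$. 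Lemma \ref{LemStabilityOfGE}$(ii)$ then gives that $\widetilde{R}$ is $\GE$.

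The main obstacle is the final transfer of the $\GE$ property along this Milnor square to $R$ itself. Given $g \in \SL_n(R)$, the images of $g$ in $\SL_n(\widetilde R)$ and in $\SL_n(R/\icond)$ can each be reduced to the identity by elementary transformations, but these two reductions need not lift to compatible ones over $R$. The standard resolution is a Mayer--Vietoris / $K_1$-excision argument exploiting the surjectivity of $\widetilde{R}^{\times} \twoheadrightarrow (\widetilde{R}/\icond)^{\times}$ and the stable-rank-$1$ property of $R/\icond$; for the borderline size $n = 2$, where stability results do not suffice, one must additionally control Mennicke symbols explicitly. Making this bookkeeping work uniformly in $n$ is the technical heart of the proof, and the place where one has to commit to a careful choice of representatives rather than appeal to a black-box stability statement.
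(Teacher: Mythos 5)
First, note that the paper does not actually prove this statement: Theorem \ref{ThZCGE} is imported verbatim from \cite[Theorem A]{Guy16c}, so there is no in-paper argument to measure yours against. Judged on its own terms, your proposal has a genuine gap exactly where you locate it, and that gap is not mere bookkeeping. The reduction modulo the nilradical and the positive-characteristic case via Lemma \ref{LemArtinianCoefficientAndGE} are fine, and your treatment of the normalization $\widetilde{R}$ (Euclidean for $d \in \{1,2,3,4,6\}$, Bass--Milnor--Serre plus Vaserstein otherwise) is correct. But the final descent of the $\GE$ property from the conductor square down to $R$ is precisely the point where the theorem could fail, and for general orders it \emph{does} fail: Proposition \ref{PropOrder} of this very paper exhibits an order $R = \Z\br{\alpha^{\pm 1}}$ inside a localization $S$ of $\Z\br{\zeta_N}$ whose normalization has infinitely many units and satisfies $\SL_2(S) = \E_2(S)$, whose conductor quotients $R/\icond$ and $S/\icond$ are finite and hence $\GE$, and yet $\SL_2(R)/\E_2(R) \simeq \Z_N \neq 1$. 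So no formal Mayer--Vietoris or excision argument can close your Milnor square; whatever makes the theorem true must exploit arithmetic specific to the quotients of $\Z\br{C}$, and your sketch stops exactly before that input is supplied.

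A secondary flaw: your case division by characteristic is not exhaustive in the way you use it. A reduced quotient of $\Z\br{C}$ of characteristic $0$ need not be a $\Z$-order, because it can still carry additive torsion. For instance, with $C = \Z/3\Z$ and a prime $p \equiv 1 \bmod 3$ prime to the conductor, $\Z\br{C}$ surjects onto $\Z \times \F_p$, which is reduced, of characteristic $0$, and does not embed into its rationalization $\Q$; the "integral closure of $R$ in $R \otimes \Q$" then fails to contain $R$. Such mixed quotients must first be split off (as a further fiber product of a torsion-free order with a finite ring) before your conductor-square analysis can even be set up, which only compounds the descent problem above.
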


The next lemma will come in handy when scrutinizing lamplighter groups in Section \ref{SecLamplighterGroups}. Before we can state this lemma, we need to introduce some notation.
Given a rational integer $d > 0$, we set $\zeta_d \Doteq e^{\frac{2i\pi}{d}}$ and let 
$$\lambda_d: \Z\br{X} \rightarrow \Zd$$ be the ring homomorphism induced by the map $X \mapsto \zeta_d$.
Given a set $\mD$ of positive rational integers, we define 
$$\lambda_{\mD} : \Z\br{X} \rightarrow \PZd$$ by 
$\lambda_{\mD} \Doteq  \prod_{d \in \mD} \lambda_d$ and set $\mOD \Doteq \lambda_{\mD}(\ZX)$.
Let $\alpha \Doteq \lambda_{\mD}(X) \in \mOD$.
Recall that a unit in $\mOD^{\times}$ is said to be trivial if it lies in $T$, the subgroup generated by $-1$ and $\alpha$.
\begin{lemma} \label{LemUnitsOfOD}
Let $\mD$ be a non-empty finite set of positive rational integers.
\begin{enumerate}
\item The torsion-free rank of $\mOD^{\times}$ is $\sum_{d \in \mD,\, d > 2} (\frac{\varphi(d)}{2} - 1)$.
\item
Assume $\mD$ is the set of divisors of $l$, with $l \ge 2$.
\begin{itemize}
\item[$(i)$]
Any non-trivial unit of finite order in $\mODo$ is of the form $u\left(1 + \sum_{i \in E} \alpha^i\right)$
for some trivial unit $u$ and some non-empty subset $E \subset \{ 1, 2, \dots, l - 1\}$.
\item[$(ii)$]
If $l \in \{2, 3, 4, 6\}$, then the units of $\mODo$ are trivial.   
\end{itemize}
\end{enumerate}
\end{lemma}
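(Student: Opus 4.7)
The plan is to treat the two parts separately, leveraging the embedding $\mOD \hookrightarrow \PZd$ and classical arithmetic of orders.

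For part (1), I will invoke Dirichlet's unit theorem applied to $\mOD$, viewed as an order in the semisimple commutative $\Q$-algebra $K = \prod_{d \in \mD} \Q(\zeta_d)$. When $\mD$ is finite, the pairwise coprimality of the cyclotomic polynomials $\Phi_d$ and the Chinese Remainder Theorem give $\mOD \otimes_\Z \Q \simeq K$, so $\mOD$ is an order in $K$ with unit rank equal to that of the maximal order $\prod_{d \in \mD} \Z[\zeta_d]$. Summing Dirichlet's formula $r_1 + r_2 - 1$ over the factors yields $0$ for $d \in \{1,2\}$ (since $\Q(\zeta_d) = \Q$) and $\varphi(d)/2 - 1$ for $d \ge 3$ (since $\Q(\zeta_d)$ is totally imaginary of degree $\varphi(d)$), producing the announced formula. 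The infinite case reduces to the finite one via a direct-limit argument across finite subfamilies.

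For part (2)(i), let $v \in \mODo^{\times}$ be non-trivial of finite order. With $\mD$ now the divisors of $l$ and $1$ suppressed, $\mODo \simeq \ZX / (1 + X + \cdots + X^{l-1})$, and $v$ admits a representation $v = \sum_{i=0}^{l-1} c_i \alpha^i$ with $c_i \in \Z$, unique up to a uniform shift $c_i \mapsto c_i + n$ reflecting the relation $N := \sum_i \alpha^i \equiv 0$ in $\mODo$. The key input is that $\lambda_d(v)$ is a root of unity in $\Z[\zeta_d]$ for every $d \mid l$ with $d > 1$, hence $|\lambda_d(v)|^2 = 1$. Setting $\bar v := \sum_i c_i \alpha^{-i}$ and using $\lambda_d(\bar v) = \overline{\lambda_d(v)}$, we obtain $\lambda_d(v \bar v) = 1$ for every such $d$; joint injectivity of the $\lambda_d$ on $\mODo$ (since $\prod_{d \mid l,\, d > 1} \Phi_d = 1 + X + \cdots + X^{l-1}$) then forces $v \bar v = 1$ in $\mODo$. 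Equating coefficients of $v \bar v - 1$ modulo $N$ yields the autocorrelation identities $\sum_i c_i c_{i+k} = m$ for $k \not\equiv 0 \bmod l$ and $\sum_i c_i^2 = m + 1$ for a common $m \in \Z$, whence the Parseval-type relation $l \sum_i c_i^2 - (\sum_i c_i)^2 = l - 1$. I will then combine the uniform-shift freedom with the action of the trivial unit group $T = \langle -1, \alpha \rangle$ (cyclic shift and global sign change) to normalize $(c_i)$ so that $c_i \in \{0,1\}$ and $c_0 = 1$; non-triviality of $v$ forces $E := \{i \neq 0 : c_i = 1\}$ to be non-empty, yielding the stated form.

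Part (2)(ii) combines (1) and (2)(i). For $l \in \{2,3,4,6\}$, every divisor $d > 2$ of $l$ has $\varphi(d)/2 - 1 = 0$, so part (1) gives $\mODo^{\times}$ torsion-free rank zero: every unit has finite order. By (2)(i), a hypothetical non-trivial unit would equal $u(1 + \sum_{i \in E} \alpha^i)$ for some $u \in T$ and non-empty $E \subseteq \{1, \ldots, l-1\}$. A finite case-by-case inspection over the possible subsets $E$ for each of the four values of $l$ shows that $1 + \sum_{i \in E} \alpha^i$ is either non-invertible in $\mODo$ or already lies in $T$; either outcome contradicts non-triviality, so $\mODo^{\times} = T$ in these cases. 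The main obstacle will be the normalization step inside (2)(i): converting the Parseval-type identity and the autocorrelation constraints into the existence of a $\{0,1\}$-valued representative within the $T$-orbit. I expect this to rest on a minimality argument for $\sum_i c_i^2$ across that orbit, combined with integrality of the $c_i$ and careful use of $N \equiv 0$ to adjust their common baseline.
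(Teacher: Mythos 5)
Your proposal is correct, and for part 2.$(i)$ it follows a genuinely different route from the paper. The paper's argument (adapted from Ayoub--Ayoub) works with the full character group of $C$: it notes that $\sum_c a_c \chi(c)$ is a root of unity for every non-trivial character $\chi$, then uses orthogonality and a triangle-inequality bound $\vert a_c - a_{c'}\vert \le 2(l-1)/l < 2$ to force the coefficients into two adjacent integer values. You instead exploit the involution $v \mapsto \ov{v}$ induced by $X \mapsto X^{-1}$: since each $\lambda_d(v)$ is a root of unity, $\lambda_d(v\ov{v}) = 1$ for all $d \mid l$, $d>1$, and joint injectivity of these maps on $\mODo$ (which holds because $\bigcap_{d \mid l,\, d>1}(\Phi_d) = (1 + X + \cdots + X^{l-1})$ in $\ZX$, by primitivity and Gauss's lemma) gives the exact relation $v\ov{v} = 1$. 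This is sound, and your Parseval identity $l\sum_i c_i^2 - (\sum_i c_i)^2 = l-1$ makes the normalization step you worry about essentially automatic: the left side equals $\sum_{i<j}(c_i - c_j)^2$, and if some $\vert c_i - c_j\vert \ge 2$ one gets $\sum_{i<j}(c_i-c_j)^2 \ge 4 + 2(l-2) = 2l > l-1$, so all differences are at most $1$; no minimality argument over the $T$-orbit is needed. In fact your identity buys strictly more than the paper's bound: $p(l-p) = l-1$ for the number $p$ of larger coefficients forces $p \in \{1, l-1\}$, so every torsion unit of $\mODo$ is already trivial, which makes 2.$(i)$ vacuously sharp and reduces 2.$(ii)$ to part 1 alone (the paper only extracts the weaker "two adjacent values" conclusion and therefore still needs the finite case check for $l = 6$). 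For part 1 you cite the generalized Dirichlet theorem for orders in \'etale $\Q$-algebras where the paper proves the finite-index statement $\lbrack \PZd^{\times} : \mOD^{\times}\rbrack < \infty$ by hand; this is the same content. The only soft spot is your one-line "direct-limit argument" for infinite $\mD$, which is not actually justified (surjectivity on unit groups under the projections $\mOD^{\times} \to \mO(\mD')^{\times}$ is not clear), but the paper's own proof also tacitly assumes $\mD$ finite and only the finite case is used, so this is not a gap relative to the paper.
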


\begin{proof}
The proofs of $1$ and $2$.i essentially adapt \cite[Theorems 3 and 4]{AA69} 
to the rings $\mOD$ under consideration; we provide them for the reader's convenience.

$1$.
Since the additive groups of $R = \PZd$ and $\mOD$ are free Abelian groups of the same rank $r = \sum_{d \in \mOD} \varphi(d)$, 
the latter group is of finite index $k$ in the former for some $k \ge 1$. By Dirichlet's Unit Theorem, the group $R^{\times}$ is finitely generated and its 
torsion-free rank is $\sum_{d \in \mD,\, d > 2} (\frac{\varphi(d)}{2} - 1)$. Therefore, it is sufficient to prove that 
$\mOD^{\times}$ is of finite index in $R^{\times}$. This certainly holds if every unit $u \in \Zd$ for $d \in \mD$ is of finite order modulo 
$\mOD$. To see this, consider the principal ideal $I$ of $\Zd$ generated by $k$. Since $\Zd / I$ is finite, there is $k' \ge 1$  such that $u^{k'} \equiv 1 \mod I$. Therefore $u^{k'} = 1 + k \zeta$ for some $\zeta \in \Zd$. As $k \zeta \in \mOD$, we deduce that $u^{k'} \in \mOD$, which completes the proof.

$2.i$. 
Let $g \in \mOD$ such that the projection $\pr_1: \mOD \twoheadrightarrow \mODo$ maps $g$ to a unit of finite order. 
 Identifying $\mOD$ with $\ZC$ for $C = \Z/l\Z$, we can write $g = \sum_{c \in C} a_c c$ with $a_c \in \Z$.
For $d$ dividing $l$, let $\pi_d$ be the projection of $\mOD$ onto $\Zd$, let $\chi_d = {\pi_d}_{\vert C}$ and 
$\rho_d = \pi_d(g) = \sum_{c \in C} a_c \chi_d(c)$. Since $\pr_1(g)$ is of finite order, $\rho_d$ is a root of unity for every 
divisor $d > 1$. The characters $\chi_d$ form a complete set of inequivalent characters of $C$ by \cite[Lemma 2]{AA69}.
Therefore, we have 
$$
\sum_{c \in C} a_c \chi(c) = \rho_{\chi}
$$
for every $\chi \in \hat{C}$, the character group of $C$, where $\rho_{\chi}$ is a root of unity if $\chi \neq 1$.
Using the orthogonality relation of characters, we obtain
$$
l a_c = \sum_{\chi \in \hat{C}} \rho_{\chi} \ov{\chi(c)}
$$
for every $c \in C$. Hence $\vert a_c - a_{c'} \vert \le \frac{2(l - 1)}{l} < 2$ for every $(c, c') \in C^2$.
Replacing $g$ by $\varepsilon (g - k \sum_{c \in C}c)$ for a suitable choice of $\varepsilon \in \{\pm 1\}$ and  $k \in \Z$,
 we can assume that $a_c \in \{0, 1\}$ for every $c$. Replacing $g$ by $cg$ for some suitable choice of $c \in C$, we can assume that
$a_{1_C} = 1$. This ensures eventually that the image of $g$ in $\mODo$ has the desired form.

$2.ii$. If $l \in \{2, 3\}$, then $\mODo = \Z\br{\zeta_l}$ and this ring has only trivial units. Assume now $l = 4$. Since 
$R = \mathcal{O}(\{2, 4\})$ embeds into 
$\Z \times \Z\br{i}$, it has at most $8$ units. It is easily checked that there are exactly $8$ trivial units in $R$. 
Therefore all units are trivial. Assume eventually that $l = 6$. Since 
$R = \mathcal{O}(\{2, 3, 6\})$ embeds into 
$\Z \times \Z\br{\zeta_3} \times \Z\br{\zeta_3}$, it has only units of finite orders. 
Considering projections on each of the three factors, it is routine to check that no element of the form
$1 + \sum_{i \in E} \alpha^i$ with $\emptyset \neq E \subset \{1, 2, 3, 4 ,5\}$ is a unit in $R$.
This proves that $R$ has only trivial units by 2.$i$.
\end{proof}

\subsection{Nielsen equivalence in finitely generated Abelian groups} \label{SecNielsenAbel}
In this section, we present the classification of generating tuples modulo Nielsen equivalence in finitely generated Abelian groups. This result is instrumental in Section \ref{SecARow} when reducing generating vectors to a standard form.
Different parts of the aforementioned classification were obtained by different authors: \cite[Satz 7.5]{NN51}, \cite[Section 2's lemmas]{Dun63}, \cite[Lemma 4.2]{Eva93}, \cite[Example 1.6]{LM93} and \cite{DG99}. The classification reaches its complete form with

\begin{theorem}  \cite[Theorem 1.1]{Oan11} \label{ThNielsenAbel}
Let $G$ be a finitely generated Abelian group whose invariant factor decomposition is
$$
\Z_{d_1} \times \cdots  \times \Z_{d_k},  \quad 1 \neq d_1 \, \vert \, d_2 \, \vert \, \cdots \, \vert \, d_k, \, d_i \ge 0
$$

Then every generating $n$-vector $\gb$ with $n \ge k = \rk(G)$ is
Nielsen equivalent to
$(\delta e_1, e_2, \dots, e_k, 0, \dots, 0)$ for some $\delta \in \Z_{d_1}^{\times}$ 
 where $\eb = (e_i) \in G^k$ is defined by $(e_i)_i = 1 \in \Z_{d_i}$ and $(e_i)_j = 0$ for $j \neq i$. 
Furthermore, the following hold:
\begin{itemize}
\item If $n > k$, then we can take $\delta = 1$.
\item If $n = k$, then $\delta$ is unique, up to multiplication by $-1$.
\end{itemize}

In particular, $G$ has only one Nielsen equivalence class of generating $n$-vectors for $n > k$ and only one $\T_k$-system while it has $\max(\varphi(d_1)/2, 1)$ Nielsen equivalence classes of generating $k$-vectors where $\varphi$ denotes the Euler totient function extended by $\varphi(0) = 2$.
\end{theorem}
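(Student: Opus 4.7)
The plan is to recast the problem as linear algebra over $\Z$. Since $G$ is Abelian, each elementary Nielsen transformation $L_{ij}$ or $I_i$ acts on a generating row $\gb \in G^n$ by right multiplication by the elementary matrix $E_{ji}(1)$, respectively the diagonal matrix $D_i(-1)$; as these matrices generate $\GL_n(\Z)$, the Nielsen equivalence relation on $\V_n(G) \subseteq G^n$ coincides with the right $\GL_n(\Z)$-action. Viewing $\gb$ via the surjection $\phi_{\gb} \colon \Z^n \twoheadrightarrow G$, $\epsilon_i \mapsto g_i$, with kernel $K \subseteq \Z^n$, the Smith Normal Form theorem yields $A \in \GL_n(\Z)$ with $A^{-1}(K) = K_{\mathrm{std}} := d_1 \Z \epsilon_1 \oplus \cdots \oplus d_k \Z \epsilon_k \oplus \Z \epsilon_{k+1} \oplus \cdots \oplus \Z \epsilon_n$ (uniqueness of invariant factors forces the diagonal entries to be precisely the $d_i$). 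Then $\gb A$ has its last $m = n - k$ components zero and its first $k$ components realize an isomorphism $\iota \colon \Z_{d_1} \times \cdots \times \Z_{d_k} \xrightarrow{\sim} G$, so every Nielsen class contains a representative of the form $(\iota(e_1), \ldots, \iota(e_k), 0, \ldots, 0)$. Two such representatives are Nielsen equivalent precisely when the underlying isomorphisms lie in the same right coset of $\Imm(\rho)$, where $\rho \colon \mathrm{Stab}_{\GL_n(\Z)}(K_{\mathrm{std}}) \to \Aut(G)$ is the reduction map; hence $\nielsen_n(G) = [\Aut(G) : \Imm(\rho)]$.

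For $n > k$, the free summand $\Z \epsilon_{k+1} \subseteq K_{\mathrm{std}}$ lets one absorb the integer-determinant constraint: given any $\psi \in \Aut(G)$, lift it to $B \in \Mat_k(\Z)$ preserving $d_1 \Z \oplus \cdots \oplus d_k \Z$, then complete it to a block $\left(\begin{smallmatrix} B & 0 \\ c & 1 \end{smallmatrix}\right) \in \GL_n(\Z)$ whose $(k+1,1)$-entry in $c$ is chosen to make the total determinant equal $\pm 1$. This gives $\Imm(\rho) = \Aut(G)$, so $\nielsen_n(G) = 1$. For the exceptional case $n = k$ no such slot is available, and the integer determinant $\det A \in \{\pm 1\}$ descends through the canonical determinant $\det_G \colon \Aut(G) \to \Aut(\wedge^k_{\Z} G) = \Aut(\Z_{d_1}) = \Z_{d_1}^{\times}$ to the constraint $\det_G(\psi) \in \{\pm 1\}$. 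I would then establish $\Imm(\rho) = \det_G^{-1}(\{\pm 1\})$: given $\psi$ with $\det_G(\psi) = \pm 1$, first produce an integer lift $B$ preserving $K_{\mathrm{std}}$ (the required divisibilities $d_i \mid d_j B_{ij}$ for $i > j$ follow automatically from the chain $d_1 \mid \cdots \mid d_k$), then use the $d_1\Z$-coset freedom on $B_{11}$, combined with elementary row operations in $\mathrm{Stab}(K_{\mathrm{std}})$, to force the integer determinant of $B$ to be exactly $\pm 1$. This yields
\[ \nielsen_k(G) = [\Z_{d_1}^{\times} : \{\pm 1\} \cap \Z_{d_1}^{\times}] = \max(\varphi(d_1)/2, 1), \]
with representatives $(\delta e_1, e_2, \ldots, e_k)$ parameterized by $\delta \in \Z_{d_1}^{\times}/\{\pm 1\}$.

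For the $\T_k$-system count, $\Aut(G)$ acts on $\V_k(G)/\Aut(F_k)$ from the left by post-composition, and through $\det_G$ this descends to the regular multiplication action of $\Z_{d_1}^{\times}$ on $\Z_{d_1}^{\times}/\{\pm 1\}$, which is transitive; hence there is a single $\T_k$-system. The main obstacle is the explicit lifting argument in the case $n = k$: verifying that every $\psi \in \Aut(G)$ with $\det_G(\psi) = \pm 1$ admits an integer lift of exact integer determinant $\pm 1$ while preserving $K_{\mathrm{std}}$ requires a careful elementary-matrix manipulation that uses the divisibility chain in an essential way, and the subcases involving free direct summands ($d_i = 0$) must be separated and handled via the classical equality $\Aut(\Z^{k_0}) = \GL_{k_0}(\Z)$ whose determinant is already in $\{\pm 1\}$. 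By comparison, the case $n > k$ becomes routine once the block-determinant trick is recognized.
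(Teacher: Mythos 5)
The paper itself offers no proof of this statement---it is quoted from \cite[Theorem 1.1]{Oan11}---so I can only assess your argument on its own terms. Your overall strategy is a standard and legitimate one: since $G$ is Abelian, Nielsen equivalence on $\V_n(G)$ is the right $\GL_n(\Z)$-action, the Smith Normal Form reduces every class to a representative $(\iota(e_1),\dots,\iota(e_k),0,\dots,0)$, and the number of classes is the index of $\Imm(\rho)$ in $\Aut(G)$, where $\rho$ is the reduction map on the stabilizer of $K_{\mathrm{std}}$. That reduction is correct. The problems are in the two places where you compute $\Imm(\rho)$.

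For $n>k$, your determinant adjustment rests on a false computation: the block lower-triangular matrix $\left(\begin{smallmatrix} B & 0 \\ c & I \end{smallmatrix}\right)$ has determinant $\det B$ \emph{independently} of $c$, so no choice of the $(k+1,1)$-entry can repair it. Concretely, take $G=\Z_5$, $n=2$, and $\psi$ the multiplication by $2$: the lift $B=(2)$ gives $\det\left(\begin{smallmatrix} 2 & 0 \\ c & 1\end{smallmatrix}\right)=2$ for every $c$, yet $\psi$ certainly lies in $\Imm(\rho)$, e.g.\ via $\left(\begin{smallmatrix} 2 & 5 \\ 1 & 3\end{smallmatrix}\right)$. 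The point you are missing is that a matrix preserving $K_{\mathrm{std}}$ need not have vanishing off-diagonal block on the other side; it only needs those entries to be divisible by the corresponding $d_i$, and it is exactly this freedom that makes $\rho$ surjective when $n>k$. (More in the spirit of Nielsen moves: $(\delta e_1,e_2,\dots,e_k,0,\dots,0)$ becomes $(e_1,\dots,e_k,0,\dots,0)$ in three elementary steps using the spare slot---add $\delta'$ times the first entry to the last, where $\delta\delta'\equiv 1 \bmod d_1$, then subtract $\delta$ times the last from the first, then swap.)

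For $n=k$ you assert $\Imm(\rho)=\det_G^{-1}(\{\pm 1\})$ but prove only the easy inclusion $\subseteq$, which yields the lower bound on $\nielsen_k(G)$ (i.e.\ the uniqueness of $\delta$ up to sign). The reverse inclusion---every automorphism of $G$ with determinant $\pm 1$ in $\Z_{d_1}^{\times}$ lifts to a matrix of $\GL_k(\Z)$ preserving $K_{\mathrm{std}}$---is needed both for the existence of the normal form $(\delta e_1,e_2,\dots,e_k)$ and for the exact count $\max(\varphi(d_1)/2,1)$, and you explicitly defer it as ``the main obstacle.'' This is the actual content of the theorem: when all $d_i$ are equal it amounts to the surjectivity of $\SL_k(\Z)\rightarrow\SL_k(\Z_{d})$, and for a general divisor chain, with possible free factors $d_i=0$, it requires a genuine lifting argument that your sketch does not supply. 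As it stands the proposal is a correct reduction plus one broken step and one unproved key claim, not a proof.
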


In the remainder of this section, we consider decompositions with cyclic factors which 
might differ from the invariant factor decomposition of $G$.

\begin{corollary} \label{CorDet} \cite[Corollary C]{Guy17}
Let $G = \Z_{d_1} \times \cdots  \times \Z_{d_k}$ with $d_i \ge 0$ for $i = 1, \dots, k$. 
Let $d$ be the greatest common divisors of the integers $d_i$.
For $\gb \in \V_k(G)$, denote by $\det(\gb)$ the determinant of the matrix whose coefficients are the images in $\Z_d$ of the $(g_i)_j$'s 
via the natural maps $\Z_{d_j} \twoheadrightarrow \Z_d$.
Then $\gb, \gb' \in \V_k(G)$ are Nielsen equivalent if and only if $\det(\gb) = \pm \det(\gb')$.
\end{corollary}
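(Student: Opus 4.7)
The plan is to deduce this from Theorem~\ref{ThNielsenAbel} together with the observation that, because $G$ is Abelian, the $\Aut(F_k)$-action on $\V_k(G)$ factors through $\Aut(\Z^k) \simeq \GL_k(\Z)$ via the abelianization; the Nielsen moves $L_{ij}$ and $I_i$ descend to elementary matrices and sign-flips, which together generate $\GL_k(\Z)$, so Nielsen equivalence on $\V_k(G)$ coincides with $\GL_k(\Z)$-orbit equivalence. Since $d$ divides each $d_j$, coordinatewise reduction yields a surjection $\pi : G \twoheadrightarrow \Z_d^k$, and for $\gb \in \V_k(G)$ the matrix $M(\gb)$ with rows $\pi(g_i)$ belongs to $\GL_k(\Z_d)$, so $\det(\gb) \in \Z_d^{\times}$. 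The forward direction is then immediate: under a $\GL_k(\Z)$-action, $M(\gb)$ is multiplied by the reduction $\overline{\sigma} \in \GL_k(\Z_d)$ of the acting matrix, so $\det(\gb)$ is multiplied by $\det(\sigma) = \pm 1$.

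For completeness, I would split on $\rk(G)$. The first invariant factor $f_1$ of $G$ equals the gcd of the entries of the diagonal presentation matrix $\diag(d_1, \dots, d_k)$, hence $f_1 = d$ and $\rk(G) < k$ iff $d = 1$. If $d = 1$, then $\Z_d^{\times}/\{\pm 1\}$ is trivial and Theorem~\ref{ThNielsenAbel} directly gives a single Nielsen class of generating $k$-vectors, so the claim holds vacuously. If $d > 1$, then $\rk(G) = k$ and I would fix an isomorphism $\phi : G \xrightarrow{\sim} G' := \Z_{f_1} \times \cdots \times \Z_{f_k}$ onto invariant factor form. The induced isomorphism $\overline{\phi} \in \GL_k(\Z_d)$ of the reductions satisfies $\det_{G'}(\phi(\gb)) = \det(\overline{\phi})\, \det_G(\gb)$ for every $\gb \in \V_k(G)$, so the condition $\det_G(\gb) = \pm \det_G(\gb')$ transfers to its analogue for $\phi(\gb), \phi(\gb')$ in $G'$; it therefore suffices to prove the corollary for $G'$.

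Finally, Theorem~\ref{ThNielsenAbel} applied to $G'$ gives a canonical Nielsen representative $(\delta e_1, e_2, \dots, e_k)$ with $\delta \in \Z_{f_1}^{\times}/\{\pm 1\}$; reducing modulo $d$ yields the matrix $\diag(\delta, 1, \dots, 1)$, of determinant $\delta$. Consequently two generating $k$-vectors of $G'$ are Nielsen equivalent iff their $\delta$'s coincide up to sign iff their determinants agree up to sign, and pulling back through $\phi$ concludes the proof. The only slightly delicate points are the identification $d = f_1$ when $\rk(G) = k$ and the verification that a change of decomposition rescales determinants by a common unit of $\Z_d$; both amount to routine bookkeeping with row and column operations, so I do not foresee a substantive obstacle.
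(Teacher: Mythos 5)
Your proof is correct. Note that the paper does not actually prove Corollary~\ref{CorDet}: it is imported wholesale from \cite[Corollary C]{Guy17}, so there is no internal argument to compare against. Your derivation from Theorem~\ref{ThNielsenAbel} is a legitimate self-contained substitute, and all the key steps hold up: for Abelian $G$ the Nielsen action does factor through $\GL_k(\Z)$, the forward implication is the multiplicativity of $\det$ under right multiplication by the reduction of the acting matrix, the identification $f_1 = d$ via the Smith normal form of $\diag(d_1,\dots,d_k)$ is correct (and $f_1 \neq 1$ forces every invariant factor to be $\neq 1$, so $\rk(G)=k$ exactly when $d \neq 1$), and the transfer of the determinant condition under an isomorphism $\phi$ works because $\det_G$ is intrinsically the determinant of the image of $\gb$ in $G/dG \simeq \Z_d^k$, so a change of decomposition rescales all determinants by the single unit $\det(\overline{\phi}) \in \Z_d^\times$, which is invisible to the condition $\det(\gb)=\pm\det(\gb')$. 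Two cosmetic points: your case split ``$d=1$'' versus ``$d>1$'' omits $d=0$ (all $d_i=0$, i.e.\ $G=\Z^k$), but that case falls under the same argument with $\Z_0=\Z$ and $\Z^\times=\{\pm1\}$, consistent with the paper's convention $\varphi(0)=2$; and in the $d=1$ branch the claim is not ``vacuous'' but rather both sides of the equivalence are always true. What your route buys is independence from \cite{Guy17} at the cost of relying on the (also cited) Theorem~\ref{ThNielsenAbel}; since the cited source \cite{Guy17} presumably proves a more general module-theoretic statement over quasi-Euclidean rings, your specialization to $\Z$ is the more elementary path for this particular corollary.
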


Let $\Z_{d_1} \times \cdots  \times \Z_{d_k}$ be a decomposition of $G$ in cyclic factors such that $k = \rk(G)$. The identity elements of each factor ring form a generating vector of $G$. We refer to this vector as the \emph{generating vector naturally associated}  to the given decomposition. If $\eb$ is such a vector, we define $\det_{\eb}$ as the determinant function of Corollary \ref{CorDet}.

\begin{remark} \label{DetE}
In the case $k = \rk(G)$,
Corollary \ref{CorDet} shows in particular that $\gb \in \V_k(G)$ is Nielsen equivalent to $(\det_{\eb}(\gb)e_1, e_2, \dots, e_k)$ where $\eb = (e_i)$ is the generating vector naturally associated to the given decomposition in cyclic factors of $G$.
\end{remark}

\section{Nielsen equivalence classes and  $\T$-systems of $M \rtimes_{\alpha} C$}   \label{SecMC}
In this section, we prove Theorem \ref{ThMTimesC} and some prerequisites of Theorems \ref{ThN2} and \ref{ThT2}.
We assume throughout that $G$ is a group fitting in  the exact sequence (\ref{EqExt}) and $n \ge \max(\rk(G), 2)$. Recall that $\alpha$ denotes the image of a favored generator $a$ of $C$ via the natural map $\ZC \twoheadrightarrow R$.
Computing with powers of group elements in $G$ shall be facilitated by the following notation.
For $u \in R^{\times}$ and $l \in \Z \cup \{ \infty \}$, let 
$$
\partial_u(l)=\left\{
\begin{array}{cc}
1 + u + \cdots + u^{l - 1} & \text{ if } l > 0, \\
0 & \text{ if } l = 0, \,\infty, \\
- u^{-1} \partial_{u^{-1}}(-l) & \text{ if } l < 0.
\end{array}
\right.
$$
For every $l \in \Z$ we have then
$(1- u) \partial_u(l) = 1 - u^l$. In particular $(1 - \alpha)\nu(G) = 0$ for $\nu(G) = \partial_{\alpha}(\cC)$.
If $C$ is infinite then $\partial_{\alpha}$ is
the composition of the Fox derivative over $C$ \cite{Fox53} with the natural embedding of $C$ into $R = \ZC$. 
For $k,\,l \in \Z$ and  $m \in M$, we have the identity
$
(ma^k)^l = \left(\partial_{\alpha^k}(l)m\right) a^{kl}.
$

The description of Bachmuth's $\IA$ automorphisms will considerably ease off the study of Nielsen equivalence in $G = M \rtimes_{\alpha} C$.
Recall that $F_n$ denotes the free group on $\xb = (x_1,\dots,x_n)$. 
For $\psi \in \Aut(F_n)$, let $\ovpsi$ be the automorphism of $\Z^n$ induced by $\psi$. We denote by $\eb = (e_i)$ the image of $\xb$ under the abelianization homomorphism
$
F_n \twoheadrightarrow (F_n)_{ab}=\Z^n.
$
The map $\psi \mapsto \ovpsi$ is an epimorphism from $\Aut(F_n)$ onto $\GL_n( \Z)$ \cite[Proposition I.4.4]{LS77} 
whose kernel is denoted by $\IA(F_n)$. This group
clearly contains the isomorphisms $\varepsilon_{ij}$ defined by  
$\varepsilon_{ij}(x_i) = x_j^{-1} x_i x_j$ and  $\varepsilon_{ij}(x_k) = x_k$ if $k \neq i$.
 In turn, $\IA(F_n)$ is generated by the automorphisms $\varepsilon_{ijk}$ defined by 
$\varepsilon_{ijk}(x_i) = x_i \lbrack x_j, x_k \rbrack$ and $\varepsilon_{ijk}(x_l) = x_l$ for $l \neq i$  \cite[Chapter I.4]{LS77} where
$\br{x, y} \Doteq xyx^{-1}y^{-1}$.

\subsection{Reduction to an $a$-row} \label{SecARow}
We discuss here circumstances under which 
a generating $n$-vector $\gb$ of $G$ can be Nielsen reduced to an \emph{$a$-row}, i.e., 
a generating $n$-vector of the form $(\mb, a)$ with $\mb \in \U_{n - 1}(M)$ 
and $a$ a favored generator of $C$ fixed beforehand. We first observe 
that any generating $n$-vector $\gb$ is Nielsen equivalent to $(\mb, ma)$ 
for some $\mb \in  M^{n - 1}$ and some $m \in M$.
Indeed, we can find $\psi \in \Aut(F_n)$ 
such that $\sigma(\gb) \psi = (1_{C^{n - 1}}, a)$ using Theorem \ref{ThNielsenAbel}.
This proves the claim. We shall establish conditions under which the element $m$ can be cancelled by a subsequent Nielsen transformation.

\begin{lemma} \label{LemRankOfM}
 Let  $\gb=(\mb, ma) \in G^n$ with $\mb \in M^{n - 1}$ and $m \in M$. Then $\gb$ generates $G$ if and only if
$(\mb, \nu(G) m)$ generates $M$ as an $R$-module.
\end{lemma}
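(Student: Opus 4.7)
The plan is to compare two subgroups of $G$: the subgroup $H = \langle \gb \rangle$ generated by $\gb = (\mb, ma)$, and the set
\[
K = \{ n (ma)^i : n \in N,\ i \in \Z \},
\]
where $N$ is the $R$-submodule of $M$ generated by the components of $\mb$ and by $\nu(G) m$. The key preliminary observation is that conjugation by $ma$, restricted to $M$, coincides with the action of $\alpha$: for any $m' \in M$, $(ma)m'(ma)^{-1} = m(am'a^{-1})m^{-1} = am'a^{-1} = \alpha \cdot m'$, since $M$ is Abelian and $m \in M$ commutes with $am'a^{-1}$.

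First I would verify that $N \subseteq H \cap M$. Since $H \cap M$ is stable under conjugation by $ma$, hence under the action of $R$, and since it contains the components of $\mb$, it contains the $R$-submodule they generate. Moreover, using the identity $(ma)^l = \partial_{\alpha}(l) m \cdot a^l$ from the preamble to this section, together with $\alpha^{\cC} = 1$ in the finite case, one obtains $(ma)^{\cC} = \nu(G) m \in H \cap M$ when $C$ is finite; when $C$ is infinite, $\nu(G) m = 0$ trivially lies in $H \cap M$. The $R$-invariance of $H \cap M$ thus yields $N \subseteq H \cap M$.

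Next I would check that $K$ is a subgroup of $G$ and compute $K \cap M$. Closure under multiplication follows from
\[
(n_1 (ma)^{i_1})(n_2 (ma)^{i_2}) = (n_1 + \alpha^{i_1} n_2)(ma)^{i_1 + i_2},
\]
which lies in $K$ because $N$ is $R$-invariant; a similar manipulation handles inverses. Since $K$ contains $\gb$, we have $H \subseteq K$. An element $n(ma)^i$ lies in $M$ if and only if $a^i = 1$; the power formula gives $(ma)^{\cC k} = \partial_{\alpha}(\cC k) m = k \nu(G) m$ (and only $i = 0$ survives in the infinite case), so $K \cap M = N$. Combining both inclusions gives $H \cap M = N$.

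Finally, since $\sigma(ma) = a$ generates $C$, the group $H$ surjects onto $C$; hence $H = G$ if and only if $H \cap M = M$, equivalently $N = M$, equivalently $\mb$ together with $\nu(G) m$ generate $M$ as an $R$-module. The only non-routine step is the description of $K \cap M$, which relies on the identity $\partial_{\alpha}(\cC k) = k\nu(G)$ in $R$; this is what forces the element $\nu(G) m$ (rather than $m$ itself) into the generating set of $N$ and explains the asymmetric form of the statement.
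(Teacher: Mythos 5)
Your proof is correct and rests on exactly the same two ingredients as the paper's: the power identity $(ma)^l = \partial_{\alpha}(l)m\,a^l$ (with $\partial_{\alpha}(\cC k)=k\nu(G)$ forcing $\nu(G)m$ rather than $m$ into the generating set) and the fact that conjugation by $ma$ acts on $M$ as multiplication by $\alpha$. Your sandwich $N \subseteq H\cap M \subseteq K\cap M = N$ is just a slightly more structural packaging of the paper's normal-form decomposition $w = v x_n^s$, so the two arguments are essentially identical.
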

\begin{proof}
Assume first that $\gb \in \V_n(G)$.
Given $m' \in M$ there exists $w \in F_n$ such that $m' = w(\mb, ma)$. We can write $w = vx_n^s$ with $v$ lying in the normal closure of
$\{x_1, \dots, x_{n - 1}\}$ in $F_n$ and $s \in \Z$. Since conjugation by $ma$ induces multiplication by $\alpha$ on $M$, $v(\gb)$ lies in the $R$-submodule of $M$ generated by $\mb$. As $(m a)^s = (\partial_{\alpha}(s)m) a^s$, we deduce that $s = 0$ if $C$ is infinite or $s \equiv 0 \mod \cC$ if $C$ is finite. Therefore $(m a)^s$ belongs to $\Z \nu(G) m = R \nu(G) m$ in both cases, which completes the proof of the 'only if' part.

Assume now that $(\mb, \nu(G) m)$ generates $M$ as an $R$-module. Let $H$ be the subgroup of $G$ generated by $\gb$ and write $\mb = (m_i)$. The subgroup $H$ contains the conjugates of the elements $m_i$ by powers of $a$, hence it contains the submodule of $M$ generated by $\mb$. It also contains the powers of $m a$, hence the submodule of $M$ generated by $\nu(G) m$. Thus it contains both $M$ and $a$, so that it is equal to $G$.
\end{proof}

Lemma \ref{LemRankOfM} implies the following inequalities: $$\rk(G) - 1 \le \rk_R(M) \le \rk(G)$$ 
When every generating $n$-vector of $G$ can be Nielsen reduced to an $a$-row, we say that $G$ enjoys property $\Ni_n(a)$.
If $\Ni_n(a)$ holds for $n = \rk(G)$, then the equality $\rk_R(M) = \rk(G) - 1$ must be satisfied. 
The converse does not hold,  as the latter is equivalent to the weaker property $\Ni_n(C)$ according to which every generating $n$-vector can be Nielsen reduced to a $c$-row with $c$ ranging among generators of $C$, see Theorem \ref{ThReductionToARow} below. 

\begin{lemma} \label{LemGeneratorsOfM}
 Let $\gb=(\mb, ma) \in \V_n(G)$ with $\mb \in M^{n - 1}$ and $m \in M$. Then the following hold:
\begin{itemize}
\item[$(i)$]
If $m \in (1 - \alpha)M$ then $\mb$ generates $M$ as an $R$-module.
\item[$(ii)$]
If $\nu(G)$ is nilpotent then $\mb$ generates $M$ as an $R$-module.
\item[$(iii)$]
If $\mb$ generates $M$ as an $R$-module then $\gb$ is Nielsen equivalent to $(\mb, a)$.
\end{itemize}
\end{lemma}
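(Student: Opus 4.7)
The three parts of the lemma have different flavors and would be treated in sequence.

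For (i) and (ii), the starting point is Lemma \ref{LemRankOfM}, which turns the hypothesis $\gb \in \V_n(G)$ into the fact that the row $(\mb, \nu(G) m)$ generates $M$ as an $R$-module. For (i), write $m = (1-\alpha) m''$ with $m'' \in M$ by hypothesis; the identity $(1-\alpha)\nu(G) = 0$ recorded just before Lemma \ref{LemRankOfM} yields $\nu(G) m = 0$, so $\mb$ already generates $M$. For (ii), let $N$ denote the $R$-submodule of $M$ generated by $\mb$, so that $M = N + R \nu(G) m$. Since $\nu(G)$ is nilpotent and $R$ is a quotient of $\ZX$, the equality $\nil(R) = \Jac(R)$ recorded in Section \ref{SecNotation} places $\nu(G)$ in $\Jac(R)$; thus $M = N + \Jac(R) M$, and since $M$ is finitely generated over $R$, Nakayama's lemma gives $N = M$.

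The substantive step is (iii). For every $i \in \{1,\dots,n-1\}$ and every $k \in \Z$ I would introduce the composite automorphism $\psi_{i,k} \Doteq \varepsilon_{in}^{-k} \circ L_{ni} \circ \varepsilon_{in}^k \in \Aut(F_n)$, built from the $\IA$-automorphism $\varepsilon_{in}$ recalled in Section \ref{SecMC} and the Nielsen transformation $L_{ni}$. A direct computation in $G$, using the key fact that conjugation by any element of the coset $M \cdot a$ acts on $M$ as multiplication by $\alpha$ (which rests on the abelianness of $M$), shows that $\gb \cdot \psi_{i,k} = (\mb, \alpha^k m_i \cdot ma)$; in other words, $\psi_{i,k}$ fixes all components except the $n$-th, which it left-multiplies by $\alpha^k m_i$.

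Composing the $\psi_{i,k}$'s and their inverses then allows us to replace $(\mb, ma)$ by $(\mb, h \cdot ma)$ for any $h$ in the $\Z$-span of $\{\alpha^k m_i : 1 \le i \le n-1,\, k \in \Z\}$, which is exactly the $R$-submodule $R \mb$ of $M$. Under the hypothesis $R \mb = M$, the element $-m$ lies in $R \mb$, and picking the composite corresponding to $h = -m$ yields the desired $a$-row $(\mb, a)$. The main obstacle is the verification of the formula $\gb \cdot \psi_{i,k} = (\mb, \alpha^k m_i \cdot ma)$, and in particular the fact that $\psi_{i,k}$ genuinely fixes the $i$-th slot; once this is in hand, the rest of (iii) is straightforward assembly.
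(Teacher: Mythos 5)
Your proposal is correct and follows essentially the same route as the paper: parts (i) and (ii) via Lemma \ref{LemRankOfM} together with $(1-\alpha)\nu(G)=0$ and Nakayama's lemma, and part (iii) by using conjugates of $L_{ni}$ by powers of $\varepsilon_{in}$ to add an arbitrary element of $R\mb$ to the last slot. Your explicit formula $\gb\,\psi_{i,k}=(\mb,\,\alpha^k m_i\cdot ma)$ checks out and merely packages more carefully the paper's instruction to ``subtract each term $k\alpha^l m'$ by applying $\varepsilon_{i,n}^l$ and $L_{n,i}^{-k}$.''
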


\begin{proof}
We know that $(\mb, \nu(G) m)$ generates $M$ as an $R$-module 
by Lemma \ref{LemRankOfM}. If $m \in (1 - \alpha)M$ then $\nu(G) m = 0$. 
Hence $\mb$ generates $M$ as an $R$-module, which proves $(i)$. If $\nu(G) \in \Jac(R)$, 
the same conclusion follows from Nakayama's lemma, which proves $(ii)$.
Let us prove $(iii)$. If $\mb = (m_i)$ generates $M$ as an $R$-module 
then $m$ is a sum of elements of the form $k \alpha^l m'$ with $k,l \in \Z$ and 
$m' \in \{m_1, \dots, m_{n - 1}\}$. We can subtract each of these terms from $m$ 
in the last entry of $\gb$ by applying transformations of the form $\varepsilon_{i, n}^l$ and $L_{n, i}^{-k}$. 
\end{proof}
Combining assertions $(ii)$ and $(iii)$ of Lemma \ref{LemGeneratorsOfM} yields:
\begin{lemma} \label{LemNu}
If $\nu(G)$ is nilpotent then $\Ni_n(a)$ holds for every $n \ge \rk(G)$.
\end{lemma}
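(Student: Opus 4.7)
The plan is to combine the opening reduction of Section \ref{SecARow} with assertions $(ii)$ and $(iii)$ of Lemma \ref{LemGeneratorsOfM}. Fix $\gb \in \V_n(G)$ with $n \ge \max(\rk(G), 2)$; the task is to exhibit a finite sequence of Nielsen transformations carrying $\gb$ to an $a$-row.

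First, I would appeal to Theorem \ref{ThNielsenAbel} applied to the cyclic group $C$. Because $\rk(C) = 1 < n$, every generating $n$-vector of $C$ is Nielsen equivalent to $(1_C, \dots, 1_C, a)$. Since Nielsen transformations on $F_n$ descend through $\sigma$ to Nielsen transformations in $C$, there exists $\psi \in \Aut(F_n)$ with $\sigma(\gb \psi) = (1_C, \dots, 1_C, a)$. Replacing $\gb$ by $\gb \psi$, we may therefore assume from the outset that $\gb = (\mb, ma)$ for some $\mb \in M^{n - 1}$ and some $m \in M$.

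Next, the nilpotency of $\nu(G)$ is exactly the hypothesis needed to invoke Lemma \ref{LemGeneratorsOfM}.$(ii)$, which gives $\mb \in \U_{n - 1}(M)$. A final application of Lemma \ref{LemGeneratorsOfM}.$(iii)$ then supplies further Nielsen transformations that eliminate the leftover $m$ and produce the $a$-row $(\mb, a)$. This proves $\Ni_n(a)$.

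No step here is delicate: the statement is essentially the concatenation of the three ingredients just cited. The only point deserving a moment's thought is the first reduction, which relies on $n > \rk(C) = 1$ rather than merely on $n \ge \rk(G)$; this is exactly why the standing convention of Section \ref{SecMC} enforces $n \ge 2$.
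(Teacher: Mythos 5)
Your proposal is correct and follows exactly the paper's route: the paper obtains Lemma \ref{LemNu} by combining the opening reduction of Section \ref{SecARow} (Nielsen-reducing $\sigma(\gb)$ to $(1_{C^{n-1}},a)$ via Theorem \ref{ThNielsenAbel}, which requires $n\ge 2 > \rk(C)$) with assertions $(ii)$ and $(iii)$ of Lemma \ref{LemGeneratorsOfM}. Nothing to add.
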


Let $\pi_{ab} :G \twoheadrightarrow \Gab$ be the abelianization homomorphism of $G$.
Let $\pi_C$ be the natural homomorphism $M \twoheadrightarrow M_{C} \Doteq M/(1-\alpha)M$. 
Then we have $\Gab = M_C \times C$ and $\pi_{ab} = \pi_C \times \sigma$.

\begin{proposition} \label{PropReductionToRow}
Let $\gb \in \V_n(G)$ and assume that at least one of the following holds:
\begin{itemize}
\item[$(i)$]
$n > \rk(\Gab)$,
\item[$(ii)$]
$n > \rk(M_C)$ and $M_C$ is not free over $\Z$.
\end{itemize}
 Then $\gb$ is Nielsen equivalent to a vector $(\mb, a)$ with $\mb \in \U_{n - 1}(M)$.
\end{proposition}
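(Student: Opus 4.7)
My plan is to Nielsen-reduce $\gb$ to a vector $(\mb', m'a)$ with $m' \in (1-\alpha)M$ and $\pi_C(\mb')$ generating $M_C$; Lemma \ref{LemGeneratorsOfM}$(i)$ then forces $\mb'$ to generate $M$ as an $R$-module, and Lemma \ref{LemGeneratorsOfM}$(iii)$ yields the final reduction to $(\mb', a)$. As noted before the statement, Theorem \ref{ThNielsenAbel} already puts $\gb$ into the form $(\mb, ma)$ for some $\mb \in M^{n-1}$ and $m \in M$, so it remains to arrange, by further Nielsen moves, that $\pi_C(m) = 0$ while $\pi_C(\mb)$ generates $M_C$.

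I would work in the abelianization $G_{ab} = M_C \times C$ and lift back: any Nielsen equivalence in $G_{ab}^n$ lifts to one in $G^n$, since the moves $L_{ij}$ and $I_i$ commute with $\pi_{ab}$. The goal is therefore to show that $\pi_{ab}(\gb)$ is Nielsen equivalent in $G_{ab}^n$ to a target of the form $((\mb_{0,1}, 0), \ldots, (\mb_{0,n-1}, 0), (0, a))$ with $\mb_0 = (\mb_{0,i}) \in M_C^{n-1}$ generating $M_C$. In Case $(i)$, $n > \rk(G_{ab})$ forces transitivity of Nielsen equivalence on $\V_n(G_{ab})$ by Theorem \ref{ThNielsenAbel}; since $n - 1 \ge \rk(G_{ab}) \ge \rk(M_C)$ such an $\mb_0$ exists, and the target is a generating $n$-vector of $G_{ab}$, hence Nielsen equivalent to $\pi_{ab}(\gb)$.

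In Case $(ii)$ I first fold the subcase $n > \rk(G_{ab})$ into Case $(i)$; otherwise $n = \rk(G_{ab})$, which combined with $n > \rk(M_C)$ and $\rk(G_{ab}) \le \rk(M_C) + 1$ pins down $\rk(M_C) = n - 1$. I then apply Corollary \ref{CorDet} to $G_{ab}$ with the cyclic decomposition $G_{ab} = \Z_{d_1} \times \cdots \times \Z_{d_{n-1}} \times \Z_l$, where $d_1 \mid \cdots \mid d_{n-1}$ are the invariant factors of $M_C$ and $l = |C|$ (with $l = 0$ when $C$ is infinite). Setting $d = \gcd(d_1, l)$, the target $((\mb_0, 0), (0, a))$ has determinant $\det(M_0) \bmod d$, where $M_0 \in \Z^{(n-1) \times (n-1)}$ is the matrix of $\mb_0$ in the chosen basis of $M_C$. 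As $\mb_0$ varies over generating $(n-1)$-vectors of $M_C$, Corollary \ref{CorDet} applied to $M_C$ says $\det(M_0) \bmod d_1$ exhausts $\Z_{d_1}^\times$; since $M_C$ is not free, $d_1 > 0$, so $d \mid d_1$ and the natural reduction $\Z_{d_1}^\times \twoheadrightarrow \Z_d^\times$ is surjective. Therefore every Nielsen class of $\V_n(G_{ab})$ contains a target vector, and the required Nielsen equivalence $\pi_{ab}(\gb) \sim ((\mb_0, 0), (0, a))$ follows.

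The main obstacle is Case $(ii)$: selecting the cyclic decomposition wisely (not the invariant factor form of $G_{ab}$ itself) and tracking the determinant through it. The hypothesis \emph{$M_C$ not free over $\Z$} enters precisely to guarantee $d_1 > 0$, so that the surjection $\Z_{d_1}^\times \twoheadrightarrow \Z_d^\times$ realizes every Nielsen class; without it, examples such as $M_C = \Z^{n-1}$ with $C$ finite of order $\ge 4$ show that the target determinants are confined to $\{\pm 1\} \bmod l$ and miss other Nielsen classes of $G_{ab}$.
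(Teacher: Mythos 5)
Your proposal is correct and follows essentially the same route as the paper: reduce $\pi_{ab}(\gb)$ to a standard form via Theorem \ref{ThNielsenAbel} in case $(i)$ and via Corollary \ref{CorDet} together with the surjectivity of $\Z_{d_1}^{\times} \twoheadrightarrow \Z_{d}^{\times}$ (which is exactly where non-freeness of $M_C$, i.e.\ $d_1 > 0$, enters) in case $(ii)$, lift the Nielsen transformation to $G$, and finish with Lemma \ref{LemGeneratorsOfM}$(i)$ and $(iii)$. The only cosmetic difference is that you realize the target determinant by varying $\mb_0$ over $\U_{n-1}(M_C)$, whereas the paper directly writes down the target $(\tilde{\delta}e_1, e_2, \dots, e_{n-1}, a)$ with $\tilde{\delta}$ a unit lift of $\det_{\eb}(\pi_{ab}(\gb))$.
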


\begin{proof}
Let $k = \rk(M_C)$. Observe first that both assumptions imply $n > k$. Let $\Z_{d_1} \times \cdots \times \Z_{d_k}$ be 
the invariant factor decomposition of $M_C$. Let then 
$\Z_{d_1} \times  \cdots \times \Z_{d_{n -1}} \times C$ be the decomposition of  $\Gab$ where 
$d_i = 1$ if $i > k$.
Define $\eb = (e_i) \in \Gab^{n - 1}$ by $e_i = 1 \in \Z_{d_i}$ if $i \le k$, $e_i = 0$ otherwise.
 Set $\overline{\gb} = \pi_{ab}(\gb)$.
Suppose now that $(ii)$ holds so that $\Z_{d_1}$ must be finite. Let 
$\tilde{\delta} \in \Z_{d_1}^{\times}$ be a lift of $\delta \Doteq \det_{\eb}(\overline{\gb})$ and let 
$\tilde{\gb} = (\tilde{\delta}e_1, e_2, \dots, e_{n - 1}, a)$. Since 
$\det_{\eb}(\tilde{\gb}) = \delta = \det_{\eb}(\overline\gb)$, the vectors $\overline{\gb}$ and $\tilde{\gb}$ are Nielsen equivalent by Corollary \ref{CorDet}. 
By Theorem \ref{ThNielsenAbel} this is also true if we assume $(i)$ and set $\tilde{\delta} = 1 \in \Z_{d_1}$.
Hence under assumption $(i)$ or $(ii)$ there is $\psi \in \Aut(F_n)$ such that $\tilde{\gb} = \overline{\gb}  \psi$. 
Then $\gb' \Doteq \gb \psi$ is of the form $(\mb, ma)$  with $\mb = (m_1,\dots, m_{n - 1}) \in  M^{n - 1}$ and $m \in M$ such that $\pi_C(m) = 0$. 
Applying Lemma \ref{LemGeneratorsOfM} gives the result.
\end{proof}

\begin{proposition} \label{PropReductionToRowFree}
Assume $C$ is finite and let $n = \rk(G)$. Suppose moreover that $M_C$ is isomorphic to $\Z^{n - 1}$. 
Let $\gb \in \V_n(G)$. Then $\gb$ is Nielsen equivalent to a vector $(\mb, a^k)$ with $\mb \in \U_{n - 1}( M)$ and 
$k = \pm \det_{(\eb, a)}(\ov{\gb})$ where $\ov{\gb} = \pi_{ab}(\gb)$ and $\eb$ is any basis of $M_C$. In particular, $\rk_R(M) = \rk(G) - 1$.
\end{proposition}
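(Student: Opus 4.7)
The plan is to mirror the proof of Proposition \ref{PropReductionToRow}, the key novelty being that when $n = \rk(G)$ the power of $a$ in the last coordinate can no longer be normalized to $1$.

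First I would exploit the decomposition $G_{ab} = M_C \times C \simeq \Z^{n-1} \times \Z_d$ (where $d = \cC$). With respect to the generating vector $(\eb, a)$ of $G_{ab}$ arising from this decomposition, the gcd of the cyclic orders $(0,\dots,0,d)$ is $d$, so $\det_{(\eb, a)}$ takes values in $\Z_d$. Setting $\ov\gb = \pi_{ab}(\gb)$ and letting $k$ be an integer representative of $\pm\det_{(\eb, a)}(\ov\gb) \in \Z_d^\times$, Corollary \ref{CorDet} gives a Nielsen equivalence between $\ov\gb$ and $(e_1, \dots, e_{n-1}, ka)$ in $G_{ab}$. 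Lifting this equivalence along $\pi_{ab}$ via some $\psi \in \Aut(F_n)$, I obtain $\gb\psi = (\mb, ma^k)$ with $\mb = (m_i) \in M^{n-1}$, $\pi_C(m_i) = e_i$, and $m \in (1-\alpha)M$. Note that $\gcd(k, d) = 1$, so $a^k$ still generates $C$.

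Second, I would adapt Lemma \ref{LemRankOfM} to the entry $ma^k$. The decomposition $w = vx_n^s$ used there still applies, but now $x_n^s(\gb\psi) \in M$ forces $d\mid s$, and then $(ma^k)^{qd} = q\,\partial_{\alpha^k}(d)m = q\nu(G)m$ because $\gcd(k,d) = 1$ makes $\{jk \bmod d\}_{j=0}^{d-1}$ a permutation of $\{0,\dots,d-1\}$. Thus the condition $\gb\psi \in \V_n(G)$ translates to $R\mb + \Z\nu(G)m = M$. Combined with $m \in (1-\alpha)M$ and the identity $(1-\alpha)\nu(G) = 0$ in $R$, this yields $\nu(G)m = 0$ and hence $R\mb = M$; together with $\rk_R(M) \ge n-1$ from Lemma \ref{LemRankOfM}, the equality $\rk_R(M) = n-1$ follows.

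Third, I would eliminate $m$ from the last coordinate by adapting Lemma \ref{LemGeneratorsOfM}(iii). Writing $m = \sum_j \mu_j \alpha^{l_j} m_{i_j}$, each term $\mu_j\alpha^{l_j} m_{i_j}$ can be subtracted from the $M$-part of $g_n$ by the triple $\varepsilon_{i_j,n}^{s_j}, L_{n,i_j}^{-\mu_j}, \varepsilon_{i_j,n}^{-s_j}$, with $s_j$ chosen so that $-s_jk \equiv l_j \pmod d$---possible precisely because $\gcd(k,d)=1$. The two outer $\varepsilon$ moves twist and then restore $g_{i_j}$, while the middle $L$ move performs the desired subtraction on $g_n$. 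Iterating over all monomials of $m$ produces a Nielsen equivalence from $\gb\psi$ to $(\mb, a^k)$. I expect this last step to be the main technical point: unlike Lemma \ref{LemGeneratorsOfM}(iii), where conjugation by $ma$ realizes $\alpha^{-1}$ directly on $M$, here conjugation by $ma^k$ realizes only $\alpha^{-k}$, so the accessibility of every power of $\alpha$ hinges on $k$ being a unit modulo $d$, and careful bookkeeping is needed to undo the auxiliary twists introduced by the $\varepsilon_{i_j,n}$ moves.
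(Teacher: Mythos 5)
Your proposal is correct and follows essentially the same route as the paper: reduce $\ov{\gb}$ to $(\eb, ka)$ using the classification of Nielsen classes in $G_{ab}$, lift to $(\mb, ma^k)$ with $m \in (1-\alpha)M$, deduce that $\mb$ generates $M$ (hence $\rk_R(M) = n-1$), and cancel $m$ by Nielsen moves. The paper simply invokes Lemma \ref{LemGeneratorsOfM} with the generator $a^k$ in place of $a$, whereas you spell out why the adaptation is legitimate (namely $\partial_{\alpha^k}(\cC) = \nu(G)$ and $\alpha^k$ generating the same subgroup as $\alpha$ because $\gcd(k,\cC)=1$) --- a worthwhile clarification, but not a different argument.
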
 
\begin{proof}
Let $\mathbf{e}$ be a basis of $M_C$ over $\Z$. By Theorem \ref{ThNielsenAbel}, we can assume that
$\ov{\gb} = (\mathbf{e}, a^k)$ for some $k$ such that
$k = \pm \det_{(\eb, a)}(\overline{\gb})$. 
Hence $\gb = (\mb, ma^k)$ for some $\mb \in M^{n -1}$ and $m \in (1 - \alpha)M$. 
Then $\gb$ is Nielsen equivalent to $(\mb, a^k)$ by Lemma \ref{LemGeneratorsOfM}.
\end{proof}

Eventually, we present a characterization of property $\Ni_n(a)$ which establishes the first part of Theorem \ref{ThN2}.

\begin{theorem} \label{ThReductionToARow}
Property $\Ni_n(a)$ holds if, and only if,  at least one of the following holds:
\begin{itemize} 
\item[$(i)$] $n  > \rk(\Gab)$.
\item[$(ii)$] $C$ is infinite.
\item[$(iii)$] $\rk(G) > \rk(M_C)$ and $M_C$ is not isomorphic to $\Z^{\rk(G) - 1}$.
\item[$(iv)$] $\cC \in \{2, 3, 4, 6\}$ and $M_C$ is isomorphic to $\Z^{\rk(G) - 1}$.
\end{itemize}
\end{theorem}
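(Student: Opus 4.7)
My plan is to establish the two implications of the biconditional separately. For sufficiency, I handle the four conditions in turn. Condition $(ii)$ is immediate from Lemma \ref{LemNu}, since $\nu(G)=0$ is nilpotent when $C$ is infinite. Condition $(i)$ is exactly the hypothesis of Proposition \ref{PropReductionToRow}$(i)$. For $(iii)$ I split into two subcases: if $\rk(G)>\rk(G_{ab})$ then $n\ge\rk(G)>\rk(G_{ab})$ and Proposition \ref{PropReductionToRow}$(i)$ applies; otherwise $\rk(G)=\rk(G_{ab})$, and combined with $\rk(G)>\rk(M_C)$ and the general bound $\rk(G_{ab})\le\rk(M_C)+1$, this forces $\rk(M_C)=\rk(G)-1$, so the assumption $M_C\not\simeq\Z^{\rk(G)-1}$ means $M_C$ is not free over $\Z$, and Proposition \ref{PropReductionToRow}$(ii)$ applies (note $n\ge\rk(G)>\rk(M_C)$).

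For $(iv)$, if $n>\rk(G)$ then $n>\rk(G_{ab})$ and we are back in case $(i)$; otherwise $n=\rk(G)$ and Proposition \ref{PropReductionToRowFree} reduces $\gb$ to a vector of the form $(\mb,a^k)$ with $\mb\in\U_{n-1}(M)$. Since $a^k$ must generate $C$, $\gcd(k,\cC)=1$, and the hypothesis $\cC\in\{2,3,4,6\}$ implies $\varphi(\cC)\le 2$, forcing $k\equiv\pm 1\pmod{\cC}$. A single application of $I_n$ when $k\equiv-1$ transforms $(\mb,a^{-1})$ into the $a$-row $(\mb,a)$.

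For necessity, assume $(i)$--$(iv)$ all fail. From $\neg(i)$ together with $n\ge\rk(G)\ge\rk(G_{ab})$ I deduce $n=\rk(G)=\rk(G_{ab})$, while $\neg(ii)$ gives $C$ finite. Using the chain $\rk(M_C)\le\rk_R(M)\le\rk(G)$ and analysing the disjunctions in $\neg(iii)$ and $\neg(iv)$, I obtain an exhaustive dichotomy: (A) $\rk_R(M)=\rk(G)$, or (B) $M_C\simeq\Z^{n-1}$ and $\cC\notin\{2,3,4,6\}$. In case (A), if $(\mb,a)\in\V_n(G)$ then Lemma \ref{LemRankOfM} forces $\mb\in\U_{n-1}(M)$, contradicting $\rk_R(M)=n$; so no $a$-row generates $G$, and since $\V_n(G)$ is non-empty, $\Ni_n(a)$ fails.

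In case (B), the failure of (A) gives $\rk_R(M)=n-1$, so I choose $\mb_0\in\U_{n-1}(M)$ together with an integer $k$ satisfying $\gcd(k,\cC)=1$ and $k\not\equiv\pm 1\pmod{\cC}$, which exists precisely because $\varphi(\cC)>2$. The vector $\gb=(\mb_0,a^k)$ generates $G$; moreover, the gcd of the invariant factors of $G_{ab}\simeq\Z^{n-1}\times\Z_{\cC}$ is $\cC$, so computing in $\Z_{\cC}$ gives $\det_{(\eb,a)}(\pi_{ab}(\gb))\equiv\pm k$ while every $a$-row yields $\pm 1$; by Corollary \ref{CorDet}, $\gb$ is Nielsen-inequivalent to every $a$-row, so $\Ni_n(a)$ fails. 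The main obstacle is the bookkeeping for necessity---verifying that the simultaneous negations of $(iii)$ and $(iv)$ combined with the rank inequalities yield the clean dichotomy (A)/(B), and that the determinant invariant in (B) genuinely separates classes modulo $\cC$.
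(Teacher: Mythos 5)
Your proof is correct and follows essentially the same route as the paper: sufficiency via Lemma \ref{LemNu}, Proposition \ref{PropReductionToRow} and Proposition \ref{PropReductionToRowFree} (with the same implicit case split between $M_C$ free and non-free hidden in condition $(iii)$), and necessity via the dichotomy $\rk_R(M)=\rk(G)$ versus $M_C\simeq\Z^{n-1}$ with $\varphi(\cC)>2$. Your explicit determinant computation via Corollary \ref{CorDet} in case (B) merely spells out what the paper leaves implicit in its appeal to Proposition \ref{PropReductionToRowFree}, where the exponent $k$ is pinned down as $\pm\det_{(\eb,a)}(\ov{\gb})$.
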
 

\begin{proof}
Let us show first that any of the assertions $(i)$ to $(iv)$ implies $\Ni_n(a)$.
For assertion $(i)$, it is Proposition \ref{PropReductionToRow}. For assertion $(ii)$, it follows from Lemma \ref{LemNu}. For the remaining assertions, we can assume that assertion $(i)$ doesn't hold, so that $n = \rk(G)$.
Then assertion $(iii)$ implies $\Ni_n(a)$, by Proposition \ref{PropReductionToRow}. So does assertion $(iv)$ by Proposition \ref{PropReductionToRowFree}.

Let us assume now that none of the assertions $(i)$ to $(iv)$ hold. We shall prove that property $\Ni_n(a)$ doesn't hold.
Since assertion $(i)$ is assumed not to hold, we infer that $n = \rk(G)$. We can assume moreover that $\rk_R(M) = n - 1$, since otherwise $\Ni_n(a)$ would fail to be true. Because none of the assertions $(ii)$, $(iii)$ and $(iv)$ hold, the group $M_C$ is isomorphic to $\Z^{\rk(G) -1}$ and $C$ is finite and such that $\varphi(\cC) > 2$. By Proposition \ref{PropReductionToRowFree}, we can find $\mb \in \U_{n - 1}(M)$ and $k > 1$ coprime with $\cC$ such that $(\mb, a^k)$ generates $G$ but cannot be Nielsen reduced to an $a$-row. 
\end{proof}

\begin{corollary} \label{CorNC}
Property $\Ni_n(C)$ holds if and only if $n > \rk_R(M)$.
\end{corollary}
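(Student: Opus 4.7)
The plan is to prove the two implications separately. The forward direction will be essentially immediate, while the backward direction proceeds by case analysis that leverages Theorem \ref{ThReductionToARow} to its full extent, with the only subtle point being to identify the single case where $\Ni_n(a)$ may fail but $\Ni_n(C)$ still holds.

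For $(\Rightarrow)$, I would pick any $\gb \in \V_n(G)$ (such a vector exists since $n \ge \rk(G)$), invoke $\Ni_n(C)$ to obtain a Nielsen equivalent vector of the form $(\mb, c)$ with $\mb \in \U_{n-1}(M)$ and $c$ a generator of $C$, and conclude $\rk_R(M) \le n - 1 < n$ from the $n - 1$ components of $\mb$.

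For $(\Leftarrow)$, assume $n > \rk_R(M)$ and fix $\gb \in \V_n(G)$. Since $\rk(G) - 1 \le \rk_R(M)$ by the inequalities established right after Lemma \ref{LemRankOfM}, we already have $n \ge \rk(G)$. If any of the four conditions of Theorem \ref{ThReductionToARow} holds, then $\Ni_n(a)$ holds, so $\gb$ reduces to an $a$-row and a fortiori to a $c$-row. It remains to handle the case when all four conditions fail. The failure of (i), combined with $n \ge \rk(G) \ge \rk(G_{ab})$, forces $n = \rk(G) = \rk(G_{ab})$, and the failure of (ii) makes $C$ finite. Next, I would observe that any $R$-generating family of $M$ projects to an $R$-generating family of $M_C$, whose $R$- and $\Z$-module structures coincide because $\alpha$ acts as the identity on $M_C$; hence $\rk(M_C) \le \rk_R(M) < n = \rk(G)$, which rules out the first alternative in the negation of (iii) and forces $M_C \simeq \Z^{n-1}$.

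At this stage Proposition \ref{PropReductionToRowFree} applies directly and yields a Nielsen reduction of $\gb$ to $(\mb, a^k)$ with $\mb \in \U_{n-1}(M)$, where $k = \pm \det_{(\eb, a)}(\pi_{ab}(\gb))$. Because $\sigma(\gb)$ generates $C$, the integer $k$ must be coprime to $\cC$, so $a^k$ is a generator of $C$ and $(\mb, a^k)$ is a $c$-row, establishing $\Ni_n(C)$. The main point to verify carefully is the rank comparison $\rk(M_C) \le \rk_R(M)$ used in the previous paragraph: without it one could not exclude the sub-case $M_C \not\simeq \Z^{n - 1}$ of the simultaneous negation of (iii) and (iv), which would leave a genuine gap since Proposition \ref{PropReductionToRowFree} requires $M_C$ to be free abelian of rank $n - 1$.
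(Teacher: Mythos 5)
Your proof is correct and follows essentially the same route as the paper: the forward direction is immediate from the definition, and the backward direction combines Theorem \ref{ThReductionToARow} with Proposition \ref{PropReductionToRowFree} in the single remaining case where $\Ni_n(a)$ fails. The only difference is cosmetic --- you argue directly where the paper argues by contradiction, and you make explicit the comparison $\rk(M_C) \le \rk_R(M)$ that the paper leaves implicit when ruling out the first alternative in the negation of condition $(iii)$.
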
 
\begin{proof}
If $\Ni_n(C)$ holds, the inequality $\rk_R(M) \le n - 1$ is satisfied by definition. Let us assume now that $\Ni_n(C)$ doesn't hold. 
Reasoning by contradiction, we assume furthermore that $n > \rk_R(M)$.
Since property $\Ni_n(a)$ cannot hold it follows from Theorem \ref{ThReductionToARow} that $n = \rk(G)$, $C$ is finite and $M_C$ is isomorphic to $\Z^{\rk(G) -1}$. The latter three conditions imply $\Ni_n(C)$ by Proposition \ref{PropReductionToRowFree},  a contradiction.
\end{proof}

\begin{corollary} \label{CorNc}
Property $\Ni_n(c)$ holds for some generator $c$ of $C$ if and only if it holds for all generators $c$ of $C$.
\end{corollary}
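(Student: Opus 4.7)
The plan is to derive the corollary as a direct consequence of Theorem \ref{ThReductionToARow}, noting that the characterization it provides makes no reference to the specific generator $a$ chosen at the outset.

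First, I would read off the four conditions in Theorem \ref{ThReductionToARow} and observe that each one is phrased purely in terms of invariants of the data $(G, M, C, n)$: namely $n$, the rank of $G_{ab}$, whether $C$ is infinite, the cardinality $|C|$, and the isomorphism type of the coinvariant module $M_C = M/(1-\alpha)M$. None of these quantities depends on which generator of $C$ has been singled out.

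The one point that merits verification is that $M_C$ itself is independent of the choice of generator. Concretely, if $c = a^k$ is another generator of $C$ (so $k = \pm 1$ in the infinite case, or $\gcd(k,|C|) = 1$ in the finite case), one needs $(1 - \alpha^k)M = (1 - \alpha)M$. The inclusion $(1 - \alpha^k)M \subseteq (1 - \alpha)M$ is immediate from the factorization $1 - \alpha^k = (1 - \alpha)(1 + \alpha + \cdots + \alpha^{k-1})$. For the reverse inclusion, in the finite case one chooses $k'$ with $kk' \equiv 1 \pmod{|C|}$ so that $\alpha = (\alpha^k)^{k'}$, giving an analogous factorization $1 - \alpha = (1 - \alpha^k) \cdot q$; in the infinite case, $k = -1$ reduces to multiplying by the unit $\alpha^{-1}$.

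With this observation, the corollary follows by applying Theorem \ref{ThReductionToARow} to $a$ and then to any other generator $c$ of $C$: both applications yield the same disjunction of the four conditions (i)--(iv), so $\Ni_n(a)$ and $\Ni_n(c)$ are logically equivalent. There is no real obstacle here; the content of the corollary is simply that Theorem \ref{ThReductionToARow} was stated in a form that already exhibits the invariance claimed.
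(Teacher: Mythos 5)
Your proof is correct and is essentially the argument the paper intends (the paper leaves the corollary as an immediate consequence of Theorem \ref{ThReductionToARow}, whose four conditions are visibly independent of the chosen generator). Your verification that $(1-\alpha^k)M = (1-\alpha)M$ for any generator $a^k$ is a worthwhile detail to spell out, though it also follows at once from the paper's intrinsic description of $M_C$ as the largest quotient of $M$ with trivial $C$-action.
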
 
$\square$

\subsection{Nielsen equivalence related to $\Gamma_{n - 1}(R)$-equivalence} \label{SecCIsZ}
In this section we scrutinize the relation between Nielsen equivalence of generating $n$-vectors and $\Gamma_{n - 1}(R)$-equivalence of unimodular rows. We prove here another part of Theorem \ref{ThMTimesC}, namely Proposition \ref{PropConverseZ} below.

Recall that $T$ denotes the subgroup of $R^{\times}$ generated by $-1$ and $\alpha$ and that $\Gamma_n(R)$ is 
the subgroup of $\GL_n(R)$ generated by $\El_n(R)$ and $\D_n(T)$.
Since $\D_n(R^{\times})$ normalizes $\E_n(R)$, we have
$
\Gamma_n(R) = \D_n(T) \El_n(R)
$.

\begin{lemma} \label{LemGeneratorsOfGE}
For every $n \ge 2$, the group
$\Gamma_n(R)$ is generated by $\D_n(T)$ together with the elementary matrices $E_{ij}(1)$ with $1 \le i \neq j \le n$. 
\end{lemma}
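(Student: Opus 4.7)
The plan is to show that the subgroup $H$ of $\Gamma_n(R)$ generated by $\D_n(T)$ together with the matrices $E_{ij}(1)$ contains all of $\E_n(R)$. Once I have this, combined with $\D_n(T) \subset H$ and the factorization $\Gamma_n(R) = \D_n(T) \E_n(R)$ recorded at the opening of Section \ref{SecCIsZ}, I get $\Gamma_n(R) \subset H$; the reverse inclusion is immediate since both generating families lie in $\Gamma_n(R)$ by definition.

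The first step I would record is the conjugation identity $D_i(u) E_{ij}(1) D_i(u^{-1}) = E_{ij}(u)$, valid for every $u \in R^{\times}$. This is a one-line matrix computation: left multiplication by $D_i(u)$ rescales row $i$ by $u$, so produces a $u$ at the $(i,i)$ and $(i,j)$ entries, after which right multiplication by $D_i(u^{-1})$ rescales column $i$ by $u^{-1}$, restoring the diagonal to the identity while leaving the $(i,j)$-entry equal to $u$. Specialized to $u \in T$, this shows that $E_{ij}(t) \in H$ for every $t \in T$ and every pair $i \neq j$.

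The second step promotes generators $E_{ij}(t)$ with $t \in T$ to generators $E_{ij}(r)$ with arbitrary $r \in R$, and rests on the fact that $T$ additively spans $R$. Indeed, $R$ is a quotient of $\ZC$, and the underlying abelian group of $\ZC$ is freely generated by $C$, so the image of $\pm C$, which is precisely $T$, already generates $R$ as an abelian group. Writing any $r \in R$ as a finite sum $r = t_1 + \cdots + t_m$ with $t_k \in T$, the standard additivity relation $E_{ij}(r + s) = E_{ij}(r) E_{ij}(s)$ then expresses $E_{ij}(r)$ as a product of factors $E_{ij}(t_k)$ already placed in $H$ by the first step.

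Since $\E_n(R)$ is by definition generated by the full family $\{E_{ij}(r) : r \in R,\; i \neq j\}$, this establishes $\E_n(R) \subset H$ and finishes the proof. No genuine obstacle arises in this argument; the only point worth explicit mention is the additive generation of $R$ by $T$, which is what makes $T$ (rather than merely $R^{\times}$ or a larger set) sufficient on the diagonal side.
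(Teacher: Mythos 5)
Your proof is correct and follows essentially the same route as the paper's: the conjugation identity $D_i(u)E_{ij}(r)D_i(u)^{-1}=E_{ij}(ur)$ together with additivity $E_{ij}(r)E_{ij}(r')=E_{ij}(r+r')$, using that $T$ (equivalently, the powers of $\alpha^{\pm1}$ up to sign) spans $R$ additively. The only cosmetic difference is that the paper conjugates only by $D_i(\alpha^{\pm1})$ and invokes that $R$ is generated as a ring by $\alpha^{\pm1}$, whereas you conjugate by arbitrary elements of $T$ directly; both amount to the same computation.
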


\begin{proof}
 For $1 \le i \neq j \le n$ and $(r,r') \in R^2, \, \beta \in \{\alpha^{\pm1}\}$, we have the following identities: 
 $D_i(\beta)E_{ij}(r)D_i(\beta)^{-1} = E_{ij}(\beta r)$ and
$E_{ij}(r)E_{ij}(r') = E_{ij}(r + r')$.
Since $R$ is generated as a ring by $\alpha$ and $\alpha^{-1}$, the result follows.  
\end{proof}

\begin{lemma} \label{LemGammaImpliesNielsen}
If $\mb, \mb' \in \U_{n - 1}( M)$ are  $\Gamma_{n - 1}(R)$-equivalent, then 
$(\mb,\, a)$, $(\mb', \,a) \in \V_n(G)$ are Nielsen equivalent.
\end{lemma}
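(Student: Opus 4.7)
The plan is to reduce the claim to checking one generator of $\Gamma_{n-1}(R)$ at a time. By Lemma \ref{LemGeneratorsOfGE}, the group $\Gamma_{n-1}(R)$ is generated by the elementary matrices $E_{ij}(1)$ with $1 \le i \neq j \le n-1$ together with the diagonal matrices $D_i(t)$ for $t \in T = \langle -1, \alpha \rangle$ and $1 \le i \le n-1$. Since Nielsen equivalence is an equivalence relation, it suffices to show that $(\mathbf{m}, a)$ and $(\mathbf{m}\gamma, a)$ are Nielsen equivalent for each such generator $\gamma$.

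For $\gamma = E_{ij}(1)$, right-multiplication replaces the $j$-th entry $m_j$ by $m_i + m_j$ and leaves the other entries untouched. Since addition in the $\Z\br{C}$-module $M$ agrees with the group operation inherited from $G$, this is the outcome of applying the elementary Nielsen transformation $L_{ij}$ to $(\mathbf{m}, a)$. For $\gamma = D_i(-1)$, we simply replace $m_i$ by $-m_i = m_i^{-1}$, which is exactly the transformation $I_i$.

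For $\gamma = D_i(\alpha^{\pm 1})$, the effect is to replace $m_i$ by $\alpha^{\pm 1} m_i$; by the defining conjugation action of $C$ on $M$, this equals $a^{\pm 1} m_i a^{\mp 1}$, i.e., conjugation of the $i$-th coordinate by the $n$-th coordinate $a$. Such a coordinate conjugation is induced by one of the $\IA$-automorphisms $\varepsilon_{in}$ of $F_n$ recalled at the beginning of Section \ref{SecMC}, which lies in $\Aut(F_n)$. Applying this automorphism (or its inverse) to $(\mathbf{m}, a)$ leaves every coordinate except the $i$-th fixed and conjugates $m_i$ by $a^{\pm 1}$, yielding $(\mathbf{m}\, D_i(\alpha^{\pm 1}), a)$.

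The only minor subtlety to keep straight is that $\Gamma_{n-1}(R)$ acts on the first $n-1$ entries while the $n$-th entry $a$ is the tool allowing the $D_i(\alpha^{\pm 1})$ case, so all elementary Nielsen moves involved must avoid disturbing the $a$-entry; this is automatic for the $L_{ij}$ and $I_i$ in the first two cases, and for $\varepsilon_{in}$ in the third case since it fixes $x_n$. Concatenating the elementary moves corresponding to a factorization of $\gamma$ into the generators above completes the argument.
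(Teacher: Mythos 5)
Your proposal is correct and follows essentially the same route as the paper: invoke Lemma \ref{LemGeneratorsOfGE} to reduce to the generators $E_{ij}(1)$ and $D_i(t)$ with $t \in T$, and match these with the Nielsen moves $L_{ij}$, $I_i$ and the $\IA$-automorphisms $\varepsilon_{i,n}^{\pm 1}$ respectively. The paper's proof is just a terser version of the same generator-by-generator check.
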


\begin{proof} Since $(\mb E_{ij}(1), a) = (\mb, a) L_{ij}$ for $1 \le i \neq j \le n -1$ and 
$(\mb D_{i}(\alpha), a) = (\mb, a) \varepsilon_{i, n}$ for 
$1 \le i \neq j \le n - 1$, we deduce from Lemma \ref{LemGeneratorsOfGE} that 
$(\mb, a)$ and $(\mb', a)$ are Nielsen equivalent.
\end{proof}

We establish now a partial converse to Lemma \ref{LemGammaImpliesNielsen}.

\begin{proposition} \label{PropConverseZ}
Assume $C$ is infinite. If  $(\mb, a), (\mb', a) \in \V_n(G)$ are Nielsen equivalent then 
$\mb, \mb' \in \U_{n - 1}(M)$ are  $\Gamma_{n - 1}(R)$-equivalent.
\end{proposition}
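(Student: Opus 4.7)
My plan is to factor a Nielsen equivalence $\psi \in \Aut(F_n)$ from $(\mb,a)$ to $(\mb',a)$ as $\phi \iota$ with $\iota \in \IA(F_n)$, pick $\phi$ so that its effect on the first $n-1$ coordinates is a $\GL_{n-1}(\Z)$-move (which lies in $\Gamma_{n-1}(R)$), and then show that $\iota$ further modifies these coordinates only by moves in $\Gamma_{n-1}(R)$.

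First I would use that $C$ is infinite. Applying $\sigma : G \twoheadrightarrow C \simeq \Z$ to $(\mb,a)\psi = (\mb',a)$, the map $\sigma \circ \pi_{(\mb,a)} : F_n \to C$ is the $x_n$-exponent-sum map. Torsion-freeness of $C$ then forces $\ov{\psi} \in \GL_n(\Z)$ to have last row $(0,\ldots,0,1)$. Write $\ov{\psi} = \begin{pmatrix} A & v \\ 0 & 1 \end{pmatrix}$ with $A \in \GL_{n-1}(\Z)$ and $v \in \Z^{n-1}$, and choose $\phi \in \Aut(F_n)$ with $\phi(x_j)$ a word in $x_1,\ldots,x_{n-1}$ realising the $j$-th column of $A$ (for $j < n$) and $\phi(x_n) = x_1^{v_1} \cdots x_{n-1}^{v_{n-1}} x_n$. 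Using that $M$ is abelian, a direct evaluation gives $(\mb,a)\phi = (\mb A, m^* a)$, where $\mb A$ denotes $\mb$ acted on by $A$ through right matrix multiplication and $m^* = \sum_i v_i m_i$. Since $\pm 1 \in T$ and $\SL_{n-1}(\Z) = \E_{n-1}(\Z) \subset \E_{n-1}(R)$, we have $\GL_{n-1}(\Z) \subset \Gamma_{n-1}(R)$, so $\mb$ and $\mb A$ are $\Gamma_{n-1}(R)$-equivalent.

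Setting $\iota = \phi^{-1}\psi \in \IA(F_n)$, we now have $(\mb A, m^* a)\iota = (\mb', a)$. The key claim to establish is: for every $\iota \in \IA(F_n)$ and every $(\mathbf{n}, m a) \in \V_n(G)$, the first $n-1$ entries of $(\mathbf{n}, m a) \iota$ are $\Gamma_{n-1}(R)$-equivalent to $\mathbf{n}$. I would prove this by running through Magnus's generators $\varepsilon_{ij}$ and $\varepsilon_{ijk}$ of $\IA(F_n)$ and computing each generator's effect via the semidirect-product identities $a^{\pm 1} m_i a^{\mp 1} = \alpha^{\pm 1} m_i$ and $[m_i, a] = (1-\alpha) m_i$. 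Abelianness of $M$ annihilates every contribution that does not involve an $n$-index. The surviving generators split into three groups: those that fix $\mathbf{n}$ and only perturb the last coordinate $m a$ (e.g.\ $\varepsilon_{ni}$ for $i < n$); those that multiply some $n_i$ by $\alpha^{\pm 1}$, realising $D_i(\alpha^{\pm 1}) \in \D_{n-1}(T)$ (namely $\varepsilon_{in}^{\pm 1}$); and those that add $\pm(1-\alpha)n_j$ to some $n_i$, realising $E_{ji}(\pm(1-\alpha)) \in \E_{n-1}(R)$ (namely $\varepsilon_{ijk}$ with $\{j, k\} \ni n$). Each such $\mathbf{n}$-move lies in $\Gamma_{n-1}(R)$, and composing gives the claim, whence $\mb A$ is $\Gamma_{n-1}(R)$-equivalent to $\mb'$, and so is $\mb$.

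I expect the main obstacle to be the bookkeeping in this generator-by-generator analysis, and in particular verifying that the effect of a generator on $\mathbf{n}$ does not depend on the current value of $m$. The latter follows from the identity $[m_j, m a] = (1 - \alpha) m_j$, in which $m$ does not appear---an artefact of $M$ being abelian, so that conjugation of $m_j$ by $m a$ acts as $a$ does. This independence ensures that the net $\mathbf{n}$-effect of a product of IA generators is an unambiguous element of $\Gamma_{n-1}(R)$, even though the intermediate $m$-values do depend on the order in which the generators are applied.
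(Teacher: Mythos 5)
Your proposal is correct and follows essentially the same route as the paper: both factor $\psi$ into a part realising a $\GL_{n-1}(\Z)$-move on the first $n-1$ coordinates, a part touching only the last coordinate, and an $\IA(F_n)$-part, and then check on Magnus generators that the $\IA$-part perturbs $\mb$ only by $\D_{n-1}(T)$- and $\E_{n-1}(R)$-moves. Your generator-by-generator computation (identifying $\varepsilon_{in}$ with $D_i(\alpha^{\pm 1})$ and $\varepsilon_{ijk}$ with $E_{ji}(\pm(1-\alpha))$, and noting the independence from the current last coordinate $ma$) makes explicit what the paper only asserts, but the underlying argument is the same.
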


\begin{proof}
Suppose that $(\mb', a) = (\mb, a) \psi$ for some $\psi \in \Aut(F_n)$. 
We claim that $\psi$ is of the form $\psi_0 \psi_1 L$ where
\begin{itemize}
\item $\psi_0 \in \IA(F_n)$, 
\item $\psi_1 \in \Aut(F_{n - 1})$, i.e., $\psi_1(x_n) = x_n$ and $\psi_1$ 
leaves $F_{n - 1}=F(x_1, \dots, x_{n - 1})$ invariant,
\item $L$ belongs to the group generated by the automorphisms $L_{n, j}$.
\end{itemize}

To see this, consider the automorphism $\ovpsi \in \GL_n(\Z)$ induced by $\psi$. 
Since $\ovpsi(e_n) = e_n$, 
we can find a product of lower elementary matrices 
$$\overline{L} \Doteq E_{n, 1}(\mu_1) \cdots E_{n , n -1}(\mu_{n - 1})
$$ with $\mu_i \in \Z$ 
such that 
$\ovpsi  \cdot \overline{L}^{-1} \in \GL_{n - 1}(\Z)$. 
Let $\psi_1 \in \Aut(F_{n - 1})$ be an automorphism inducing $\ovpsi \cdot \overline{L}^{-1}$ on $\Z^{n - 1}$. Let $L$ be the product of automorphisms $L_{n,  j}^{\mu_j}$ with $\mu_j \in \Z$. 
Then $L$ induces $\overline{L}$ and by construction 
we have $\psi L^{-1} \psi_1^{-1} \in \IA(F_n)$, which proves the claim. 

The action of every $\IA$-automorphism $\varepsilon_{ijk}$ on $(\mb, a)$ leaves $\sigma(\mb, a)$ 
invariant and induces a transformation on $\mb$ which lies in $\Gamma_{n - 1}(R)$. 
The same holds for every automorphism in $\Aut(F_{n - 1})$ and every automorphism $L_{ij}$ with $i>j$. 
Let $\gb \Doteq (\mb, a) \psi_0 \psi_1$. Then we can write $\gb = (\mathbf{n}, ma)$ with $m \in M$ and where $\mathbf{n}$ is 
$\Gamma_{n - 1}(R)$-equivalent to $\mb$. As $\sigma(\gb L) = \sigma(\mb', a) = (1_{C^{n -1}}, a)$ we deduce that $\mu_j = 0$ 
for every $j$, i.e., $L = 1$. Hence $\mb' = \mathbf{n}$ which yields the result.
\end{proof}

\subsection{Nielsen equivalence classes} \label{SecNielsen}
In this section, we complete the proof of Theorem \ref{ThMTimesC} by establishing Theorem \ref{ThNielsenCInfinite} below. We subsequently discuss assumptions under which the latter theorem enables us to enumerate efficiently Nielsen equivalence classes.
Recall that the map $\varphi_a: \U_{n - 1}(M) \rightarrow \V_n(G)$ is defined by
$\varphi_a(\mb) = (\mb, a)$.
\begin{theorem} \label{ThNielsenCInfinite}
The map $\varphi_a$ induces a map 
$$
\Phi_a: \U_{n - 1}(M)/\Gamma_{n - 1}(R) \rightarrow \V_n(G)/\Aut(F_n)
$$
and the two following hold
\begin{itemize}
\item[$(i)$]
Property $\Ni_n(a)$ holds if and only if $n > \rk_R(M)$ and $\Phi_a$ surjective.
\item[$(ii)$]
If $C$ is infinite then $\Phi_a$ is bijective.
\end{itemize}
\end{theorem}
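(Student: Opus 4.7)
The plan is to assemble the statement from the ingredients already developed. Well-definedness of $\Phi_a$ on equivalence classes is immediate from Lemma \ref{LemGammaImpliesNielsen}: $\Gamma_{n-1}(R)$-equivalent rows are sent by $\varphi_a$ to Nielsen equivalent $n$-vectors, so $\varphi_a$ descends to a map between quotients.

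For part $(i)$, the key observation is that $(\mb, a) \in \V_n(G)$ if and only if $\mb \in \U_{n-1}(M)$; this is Lemma \ref{LemRankOfM} applied with $m = 0$, in which case the module-theoretic condition reduces exactly to $\mb$ generating $M$ as an $R$-module. Consequently, the domain of $\Phi_a$ is non-empty precisely when $n - 1 \ge \rk_R(M)$, that is, when $n > \rk_R(M)$. Under that condition, surjectivity of $\Phi_a$ is nothing but the statement that every Nielsen equivalence class in $\V_n(G)$ contains an $a$-row, which is the definition of $\Ni_n(a)$. Conversely, $\Ni_n(a)$ implies that at least one $a$-row exists in $\V_n(G)$ (forcing $n > \rk_R(M)$) and, tautologically, that $\Phi_a$ is surjective.

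For part $(ii)$, if $C$ is infinite then $\nu(G) = 0$ is nilpotent, and Lemma \ref{LemNu} gives $\Ni_n(a)$ for every admissible $n$. Part $(i)$ then yields $n > \rk_R(M)$ together with surjectivity of $\Phi_a$ for free. Injectivity is exactly the content of Proposition \ref{PropConverseZ}: if the $a$-rows $(\mb, a)$ and $(\mb', a)$ are Nielsen equivalent, then $\mb$ and $\mb'$ are $\Gamma_{n-1}(R)$-equivalent.

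The main obstacle has therefore already been absorbed into Proposition \ref{PropConverseZ}, whose proof decomposes any $\psi \in \Aut(F_n)$ witnessing the Nielsen equivalence into an $\IA$-factor, an element of $\Aut(F_{n-1})$, and a product of lower elementary transformations $L_{n,j}^{\mu_j}$; the constraint that $\sigma$ send both $a$-rows to $(1_{C^{n-1}}, a)$ is what forces every $\mu_j$ to vanish, and it is precisely here that the infinitude of $C$ is used essentially, since over a finite $C$ the exponents $\mu_j$ would only be determined modulo $\cC$ and the argument would not close. The theorem at hand is thus a clean packaging of Lemmas \ref{LemRankOfM}, \ref{LemNu}, \ref{LemGammaImpliesNielsen} and Proposition \ref{PropConverseZ}.
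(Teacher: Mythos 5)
Your proof is correct and follows the same route as the paper: well-definedness from Lemma \ref{LemGammaImpliesNielsen}, the tautological identification of surjectivity with $\Ni_n(a)$ for part $(i)$, and surjectivity via Lemma \ref{LemGeneratorsOfM}.$ii$ (packaged as Lemma \ref{LemNu}) plus injectivity via Proposition \ref{PropConverseZ} for part $(ii)$. The paper merely states this more tersely; your closing remark on where the infinitude of $C$ enters Proposition \ref{PropConverseZ} is accurate commentary rather than a deviation.
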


\begin{proof}
It follows from Lemma \ref{LemGammaImpliesNielsen} that $\Phi_a$ is well-defined. Assertion $(i)$ is trivial 
while assertion $(ii)$ results from Lemma \ref{LemGeneratorsOfM}.$ii$ and Proposition
\ref{PropConverseZ}.
\end{proof}

\begin{corollary} \label{CorStableRank}
Assume that $M \simeq R$.
If $n >\sr(R) + 1$, 
then $G = M \rtimes_{\alpha} C$ has only one Nielsen equivalence class of generating $n$-vectors. 
\end{corollary}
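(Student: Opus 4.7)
The plan is to combine Theorem \ref{ThNielsenCInfinite}(i) with the classical fact that above the stable rank the elementary subgroup acts transitively on unimodular rows. More precisely, under the hypothesis of the corollary, I will verify that $\Phi_a$ is surjective and that $\U_{n-1}(R)/\Gamma_{n-1}(R)$ is a singleton; together these force $\V_n(G)/\Aut(F_n)$ to be a singleton.

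First I would check that property $\Ni_n(a)$ holds. Since $M \simeq R$ is generated as an $R$-module by the identity $1_R$, the group $G = R \rtimes_\alpha C$ is generated by the two elements $1_R$ and $a$, so $\rk(G) \le 2$ and consequently $\rk(G_{ab}) \le 2$. The inequality (\ref{EqStableRank}) gives $\sr(R) \ge \er(R) \ge 1$, so the assumption $n > \sr(R) + 1$ yields $n \ge 3 > \rk(G_{ab})$. Condition (i) of Theorem \ref{ThReductionToARow} is therefore satisfied, which shows that $\Ni_n(a)$ holds. Combining this with the evident inequality $n \ge 3 > 1 = \rk_R(M)$, Theorem \ref{ThNielsenCInfinite}(i) then provides the surjectivity of
\[
\Phi_a : \U_{n-1}(R)/\Gamma_{n-1}(R) \twoheadrightarrow \V_n(G)/\Aut(F_n).
\]

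Second, I would argue that $\U_{n-1}(R)/\Gamma_{n-1}(R)$ is a singleton. The hypothesis rewrites as $n - 2 \ge \sr(R)$, and the inequality $\er(R) \le \sr(R)$ from (\ref{EqStableRank}) gives $n - 2 \ge \er(R)$. By the very definition of the elementary rank, $\E_{n-1}(R)$ then acts transitively on $\U_{n-1}(R)$. Since $\E_{n-1}(R) \subseteq \Gamma_{n-1}(R)$, the group $\Gamma_{n-1}(R)$ acts transitively on $\U_{n-1}(R)$ as well, so the quotient $\U_{n-1}(R)/\Gamma_{n-1}(R)$ reduces to a single point.

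Putting the two steps together via the surjection $\Phi_a$, $G$ has exactly one Nielsen equivalence class of generating $n$-vectors. No step is really an obstacle here: the verification of $\Ni_n(a)$ is purely bookkeeping thanks to $\rk(G) \le 2$, and the transitivity statement is a standard consequence of the stable rank bound. The content of the corollary lies entirely in the way Theorem \ref{ThNielsenCInfinite} translates the classification of generating vectors of $G$ into an orbit problem for unimodular rows, where the stable rank hypothesis can finally be invoked.
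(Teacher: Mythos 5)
Your proof is correct and follows essentially the same route as the paper, whose own proof is the one-line "combine Theorem \ref{ThMTimesC} with the inequality (\ref{EqStableRank})": you simply make explicit the two ingredients, namely surjectivity of $\Phi_a$ (via $n \ge 3 > \rk(G_{ab})$) and transitivity of $\E_{n-1}(R)$ on $\U_{n-1}(R)$ from $n-2 \ge \sr(R) \ge \er(R)$. Nothing further to add.
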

\begin{proof}
The result follows from Theorem \ref{ThMTimesC} and the inequality (\ref{EqStableRank}).
\end{proof}

We consider now several hypotheses under which the problem of counting Nielsen equivalence classes is particularly tractable.
One of these hypotheses is that $R$ be \emph{quasi-Euclidean}, i.e.,  $R$ enjoys the following row reduction property shared by Euclidean rings:
for every $n \ge 2$ and every $\rb = (r_1, \dots, r_n) \in R^n$, there exist $E \in \El_n(R)$ and $d \in R$ such that   
$
(d, 0,\dots,0) = \rb E
$
 (see \cite{AJLL14} for a comprehensive survey on quasi-Euclidean rings). If $R$ is a Noetherian quasi-Euclidean ring,
 then $M$ admits an \emph{invariant factor decomposition}, i.e., a decomposition of the form 
$R /\ia_1 \times R/\ia_2 \times \cdots \times R /\ia_n$ with 
$R \neq \ia_1 \supset \ia_2 \supset \cdots \supset \ia_n$ where the ideals $\ia_i$ are referred to as the \emph{invariant factors} of $M$ (see \cite[Lemma 1]{Guy17}).
Recall that we denote by $T$ the subgroup of $R^{\times}$ generated by $-1$ and $\alpha$

\begin{corollary} \label{CorGE}
Let $G = M \rtimes_{\alpha} C$ and $n = \rk(G)$. Then the following hold:
\begin{itemize}
\item[$(i)$]
If $M$ is free over $R$, $C$ is infinite and $R$ is a $\GE_{n - 1}$-ring, then \\$\nielsen_n(G) = \vert R^{\times}/T \vert$.
\item[$(ii)$]
If $C$ is infinite and $R$ is quasi-Euclidean, then $\nielsen_n(G) = \vert \Lambda^{\times}/T_{\Lambda} \vert$ where 
$\Lambda = R/\ia_1$, $\ia_1$ is the first invariant factor of $M$ and $T_{\Lambda}$ is the image of $T$ in $\Lambda$ under the natural map.
\end{itemize}
\end{corollary}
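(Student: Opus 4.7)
The plan is to invoke Theorem \ref{ThNielsenCInfinite}.(ii) and reduce the count of Nielsen classes to a matrix-orbit count. Since $C$ is infinite, $\nu(G)=0$ is nilpotent, so Lemma \ref{LemNu} gives $\Ni_n(a)$ and Lemma \ref{LemRankOfM} forces $\rk_R(M) = n-1$. By Theorem \ref{ThNielsenCInfinite}.(ii) the map $\Phi_a$ is bijective, so in both parts $\nielsen_n(G) = \vert \U_{n-1}(M)/\Gamma_{n-1}(R) \vert$, and it suffices to compute this index.

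For part (i), the hypothesis that $M$ is free over $R$ combined with $\rk_R(M) = n-1$ gives $M \simeq R^{n-1}$, so $\U_{n-1}(M) = \GL_{n-1}(R)$ (a square unimodular row over a commutative ring is invertible). The $\GE_{n-1}$-assumption amounts to $\SL_{n-1}(R) = \E_{n-1}(R)$, so $\det$ induces an isomorphism $\GL_{n-1}(R)/\E_{n-1}(R) \xrightarrow{\sim} R^{\times}$, under which $\Gamma_{n-1}(R)/\E_{n-1}(R)$ maps isomorphically onto $T$ (since $\det D_i(t) = t$). Hence $\GL_{n-1}(R)/\Gamma_{n-1}(R) \cong R^{\times} / T$.

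For part (ii), $R$ is Noetherian as a quotient of $\ZX$, so $M$ admits an invariant factor decomposition $M \simeq R/\ia_1 \oplus \cdots \oplus R/\ia_{n-1}$ with $\ia_1 \supset \cdots \supset \ia_{n-1}$. Since $\ia_j \subset \ia_1$ for all $j$, one computes $M/\ia_1 M = \Lambda^{n-1}$, and reduction mod $\ia_1$ yields a $\Gamma$-equivariant map $\U_{n-1}(M) \to \GL_{n-1}(\Lambda)$. The ring $\Lambda$ is itself quasi-Euclidean (the row-reduction property descends to quotients by lifting), hence $\GE$, so part (i) applied to $\Lambda$ identifies $\GL_{n-1}(\Lambda)/\Gamma_{n-1}(\Lambda) \cong \Lambda^{\times} / T_\Lambda$ through $\det$. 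Composing, one obtains a well-defined map $\bar\chi: \U_{n-1}(M)/\Gamma_{n-1}(R) \to \Lambda^{\times} / T_\Lambda$ sending the class of $\rb$ to the class of $\det(\rb \bmod \ia_1)$, and the goal reduces to showing that $\bar\chi$ is a bijection. Surjectivity is immediate: for $\delta \in \Lambda^{\times}$ with lift $\tilde\delta \in R$, the row $(\tilde\delta e_1, e_2, \ldots, e_{n-1}) \in \U_{n-1}(M)$ maps to $\delta$.

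The crux, and the main obstacle, is injectivity. I would prove a canonical-form lemma: every $\rb \in \U_{n-1}(M)$ is $\E_{n-1}(R)$-equivalent to $(\tilde\delta e_1, e_2, \ldots, e_{n-1})$ with $\bar{\tilde\delta} = \det(\rb \bmod \ia_1)$. This rests on a Smith-normal-form-style reduction available over quasi-Euclidean rings: Whitehead's lemma implies that any unimodular row over $R/\ia_j$ reduces to $(1, 0, \ldots, 0)$ via $\E_{n-1}(R/\ia_j)$, and such elementary operations lift to $\E_{n-1}(R)$. Working iteratively from $j = n-1$ down to $j = 2$, one clears the lower components of the presenting data and is left with a single scalar $\tilde\delta$ in position $(1,1)$, whose residue in $\Lambda$ records the determinant invariant. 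Injectivity is then routine: if $(\tilde\delta e_1, e_2, \ldots)$ and $(\tilde\delta' e_1, e_2, \ldots)$ have the same image, write $\bar{\tilde\delta} = t \bar{\tilde\delta'}$ in $\Lambda^\times/T_\Lambda$ and lift $t$ to $\tilde t \in T$; then $D_1(\tilde t) \in \Gamma_{n-1}(R)$ sends the second tuple to the first, because $\tilde t \tilde\delta' - \tilde\delta \in \ia_1$ and $\ia_1 \cdot e_1 = 0$ in $M$.
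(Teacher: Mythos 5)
Your part (i) is essentially the paper's own argument. The paper likewise identifies $\U_{n-1}(M)$ with $\GL_{n-1}(R)$ via the matrix whose columns are the components of a unimodular row, uses $\Gamma_{n-1}(R)=\D_{n-1}(T)\,\SL_{n-1}(R)$ (valid because $R$ is a $\GE_{n-1}$-ring) together with Whitehead's lemma to reduce that matrix to $D_{n-1}(u)$, and reads off the class of $u$ in $R^{\times}/T$ from the determinant; your determinant-quotient formulation is the same computation. The setup (Lemma \ref{LemNu} giving $\Ni_n(a)$, the equality $\rk_R(M)=n-1$, and Theorem \ref{ThNielsenCInfinite} giving bijectivity of $\Phi_a$) also matches the paper.

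Part (ii) is where you genuinely diverge: the paper simply quotes \cite[Theorem A and Corollary C]{Guy17} for the identification $\U_{n-1}(M)/\Gamma_{n-1}(R)\simeq \Lambda^{\times}/T_{\Lambda}$, whereas you attempt a self-contained proof. Your architecture (reduce modulo $\ia_1$, take the determinant into $\Lambda^{\times}/T_{\Lambda}$, prove surjectivity via the row $(\tilde\delta e_1,e_2,\dots,e_{n-1})$ and injectivity via a canonical form) is the right one, and if completed it would make the corollary independent of the citation. However, the canonical-form lemma is precisely where the content of the cited result lives, and your sketch skips its one nontrivial point. After you normalize the row of $\Mat(\rb)$ corresponding to the factor $R/\ia_{n-1}$ to $(0,\dots,0,1)$ by lifted column operations, the last column equals $e_{n-1}+m'$ with $m'\in M'=R/\ia_1\times\cdots\times R/\ia_{n-2}$, and in order to clear $m'$ and descend to $M'$ you must first show that the remaining $n-2$ columns already generate $M'$. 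This is true but is not a formal consequence of lifting elementary operations: one has to use the chain $\ia_1\supset\cdots\supset\ia_{n-1}$, e.g.\ by noting that $M/N\cong M'/N\times R/\ia_{n-1}$ (with $N$ the span of the first $n-2$ columns) is cyclic on the image of $e_{n-1}+m'$, and that expressing $(0,1)$ in terms of this generator forces $\overline{m'}=0$ and then $M'/N=0$ because $\ann(M'/N)\supset\ia_{n-1}$. Without the ordering of the invariant factors the reduction fails, so this step must be supplied; as written, ``working iteratively \dots one clears the lower components'' is a gap, though a reparable one. Two minor points: transitivity of $\E_{n-1}(R/\ia_j)$ on unimodular rows comes from quasi-Euclideanity of the quotient ring (Lemma \ref{LemGECriteria}), not from Whitehead's lemma; and the case $n=2$, where $M$ is cyclic and no elementary operations are available, should be treated separately (it is immediate).
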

\begin{proof}
$(i)$. As $C$ is infinite, it follows from Theorem \ref{ThMTimesC} that $M \simeq R^{n - 1}$. For $\mb \in \U_{n - 1}(M)$, let 
$\Mat(\mb) \in \GL_{n - 1}(R)$ be the matrix whose columns are the components of $\mb$. For every $E \in \Gamma_{n - 1}(R)$, 
the identity $\Mat(\mb E) = \Mat(\mb)E$ holds. As $R$ is a $\GE_{n - 1}$-ring, we have $\Gamma_{n - 1}(R) = \D_{n -1}(T) \SL_{n - 1}(R)$. We deduce from 
Whitehead's lemma that $\Mat(\mb)$ can be reduced to $D_{n - 1}(u)$ via right multiplication by some $E \in \Gamma_{n -1}(R)$, where $u$ is a member of a transversal of $R^{\times}/T$. 
Therefore $\nielsen_n(G) \le  \vert R^{\times}/T \vert$.
Since $uT = \det(\Mat(\mb)E)T$, we conclude that $\nielsen_n(G) = \vert R^{\times}/T \vert$.

$(ii)$. By \cite[Theorem A and Corollary C]{Guy17},  we have $\U_n(M)/\Gamma_n(R) \simeq \Lambda^{\times}/T_{\Lambda}$. Theorem \ref{ThMTimesC} implies that $\nielsen_n(G) = \left\vert\Lambda^{\times}/T_{\Lambda}\right\vert$.
\end{proof}

We examine in the next proposition the structural implication of $M$ being free over $R$ with $R$-rank equal to $\rk(G)$.
\begin{proposition} \label{PropCyclicDecMaxRank}
Assume that $\rk_R(M) = n = \rk(G)$ and that $M$ is the direct sum of $n$ cyclic factors,  i.e.,
$$
M = R /\ia_1 \times  \cdots \times R /\ia_n
$$
where the $\ia_i$ are ideals of $R$.
Let $\ia = \ia_1 + \cdots + \ia_n$.
Then $\nu(G)$ is invertible modulo $\ia$. In addition, $C$ is finite and $G/ \ia M = \Z_d^{\rk(G)} \times C$ where 
$d = \vert R/\ia \vert < \infty$ is prime to $\cC$.
\end{proposition}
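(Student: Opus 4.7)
The plan is to proceed in three stages: first rule out $C$ being infinite; next, use a well-chosen generating $n$-vector of $G$ to show that $\nu(G)$ is a unit modulo $\ia$; and finally, read off the structure of $G/\ia M$ from the relation $(1-\alpha)\nu(G) = 0$.

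To see that $C$ must be finite, I would argue by contradiction. If $C$ were infinite then $\nu(G) = 0$ would be nilpotent, so Lemma \ref{LemNu} would yield property $\Ni_n(a)$; but Theorem \ref{ThNielsenCInfinite}.$(i)$ then forces $n > \rk_R(M)$, contradicting $\rk_R(M) = n$. Hence $C$ is finite.

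Next, pick any $\gb \in \V_n(G)$. Applying Theorem \ref{ThNielsenAbel} inside the cyclic quotient $C$ (a single-factor abelian group with $n \ge 2 > 1$) shows that $\sigma(\gb)$ is Nielsen equivalent to $(1_C, \ldots, 1_C, a)$; lifting the corresponding element of $\Aut(F_n)$ to its action on $\gb$ replaces $\gb$ by a Nielsen equivalent vector of the form $(m_1, \ldots, m_{n-1}, m_n a)$ with $m_i \in M$. By Lemma \ref{LemRankOfM}, the tuple $(m_1, \ldots, m_{n-1}, \nu(G) m_n)$ generates $M$ as an $R$-module, and the hypothesis $\rk_R(M) = n$ makes it a minimal generating set. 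Since $\ia \supseteq \ia_i$ for every $i$, there is a canonical identification $M/\ia M \simeq (R/\ia)^n$ in which the standard basis is the image of $(e_i)$. The images of $m_1, \ldots, m_{n-1}, \nu(G) m_n$ then present an $(R/\ia)$-linear surjection $(R/\ia)^n \twoheadrightarrow (R/\ia)^n$, which is automatically an isomorphism because every epimorphism of a finitely generated module is an automorphism. The associated $n \times n$ matrix $A$ is therefore invertible over $R/\ia$; expanding $\det A$ along the column corresponding to $\nu(G) m_n$ shows that $\nu(G)$ divides the unit $\det A$, so $\nu(G) \in (R/\ia)^\times$.

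The rest is formal. Multiplying the identity $(1-\alpha)\nu(G) = 0$ (valid in $R$ and a fortiori in $R/\ia$) by $\nu(G)^{-1}$ gives $\alpha \equiv 1 \pmod \ia$, so $R/\ia$ is a quotient of $\ZX/(X-1) = \Z$ and thus equals $\Z_d$ for some $d \ge 0$. In $\Z_d$ we have $\nu(G) = \vert C \vert$, and invertibility forces $d$ to be finite and coprime to $\vert C \vert$. Because $\alpha \equiv 1 \pmod \ia$, the $C$-action on $M/\ia M = \Z_d^n$ is trivial, so the semidirect product becomes direct: $G/\ia M = \Z_d^n \rtimes C = \Z_d^n \times C$. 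The step I expect to be the real obstacle is the middle one, namely converting the abstract equality $\rk_R(M) = \rk(G)$ into a concrete matrix statement about $\nu(G)$ through Nielsen reduction; once the generator is in the shape $(\mb, m_n a)$, Lemma \ref{LemRankOfM} and the determinant calculation are routine.
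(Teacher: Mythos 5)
Your proof is correct and follows essentially the same route as the paper: Nielsen-reduce a generating $n$-vector to the form $(\mb, m_n a)$, apply Lemma \ref{LemRankOfM}, and use the fact that a surjective endomorphism of the free module $(R/\ia)^n$ of rank $n$ is an automorphism whose determinant --- divisible by $\nu(G)$ because of the last column --- must be a unit, before reading off the structure of $G/\ia M$ from $(1-\alpha)\nu(G)=0$. The only cosmetic difference is that you establish finiteness of $C$ up front via Lemma \ref{LemNu} and Theorem \ref{ThNielsenCInfinite}, whereas the paper deduces it at the end by comparing $\rk(\Z_d^{n}\times C)$ with $n$.
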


\begin{proof}
We can assume without loss of generality that $\ia = \{0\}$. 
Let $\eb = (e_i)$ be a basis of $M$ over $R$ and let $\gb \in \V_n(G)$ for $n = \rk(G)$. 
Replacing $\gb$ by $\gb \psi$ for some $\psi \in \Aut(F_n)$, if needed, we can also suppose that $\gb = (\mb, ma)$ with $\mb \in M^{n - 1}$ and $m \in M$. 
By Lemma \ref{LemRankOfM}, the row $(\mb, \nu(G) m)$ generates $M$ as an $R$-module. 
Therefore the map $\eb \mapsto (\mb, \nu(G) m)$
 induces an $R$-automorphism of $M$. 
This shows that $e_n = \nu(G) m'$ for some $m' \in M$. Hence a relation of the form
$\sum_{i = 1}^{n - 1} r_i e_i + (\nu(G) r_n - 1) e_n = 0$, with $r_i \in R$ holds in $M$. It follows that $\nu(G)$ is invertible. 
Thus $M = \nu(G) M$ is $C$-invariant so that $G = M_C \times C$. 
As a result $M = M_C$ is a free $\Z_d$-module with 
$d = \vert R \vert$ or $d = 0$. Since $\rk(M) = \rk(G)$, the group $C$ must be finite, $d$ must be non-zero and prime to $\cC$. 
\end{proof}

\begin{corollary} \label{CorGEMFree}
Let $G = M \rtimes_{\alpha} C$. Assume that at least one of the following holds:
\begin{itemize}
\item[$(i)$]
$R$ is quasi-Euclidean.
\item[$(ii)$]
$M$ is free over $R$ and $R$ is a $\GE$-ring
\end{itemize} 
Then $\nielsen_n(G) = 1$ for every $n > \rk(G)$.
\end{corollary}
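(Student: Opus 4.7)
The plan is to combine Theorem~\ref{ThReductionToARow} with the cyclic-decomposition results of \cite{Guy17} applied to $M$, exploiting the slack $n > \rk(G)$ to collapse all remaining $\Gamma_{n-1}(R)$-orbits on $\U_{n-1}(M)$ to a single Nielsen class, falling back on the Abelian classification in the extremal rank case.

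Since $n > \rk(G) \ge \rk(G_{ab})$, condition $(i)$ of Theorem~\ref{ThReductionToARow} holds, so $\Ni_n(a)$ is in force and every generating $n$-vector of $G$ is Nielsen equivalent to an $a$-row $(\mb, a)$ with $\mb \in \U_{n-1}(M)$. By Lemma~\ref{LemGammaImpliesNielsen}, it therefore suffices to show that every $\mb \in \U_{n-1}(M)$ is $\Gamma_{n-1}(R)$-equivalent to one fixed row. Set $k \Doteq \rk_R(M)$. In case $(i)$ the quasi-Euclidean assumption and \cite[Lemma~1]{Guy17} provide an invariant factor decomposition $M \simeq \bigoplus_{i=1}^k R/\ia_i$; in case $(ii)$ one has the free decomposition $M = \bigoplus_{i=1}^k R e_i$. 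Since $R$ is a $\GE_{n-1}$-ring in either case, \cite[Theorem A and Corollary C]{Guy17} produces a canonical representative $(u e_1, e_2, \ldots, e_k, 0, \ldots, 0)$ for every $\Gamma_{n-1}(R)$-orbit on $\U_{n-1}(M)$, with $u \in R^\times$ (taken modulo $\ia_1$ in case $(i)$).

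If $n - 1 > k$, the trailing zero column allows a Whitehead trick: the matrix $\diag(u^{-1}, 1, \ldots, 1, u)$ has determinant $1$, hence belongs to $\SL_{n-1}(R) = \E_{n-1}(R) \subseteq \Gamma_{n-1}(R)$ by the $\GE_{n-1}$-property, and right-multiplying by it sends the canonical row to $(e_1, e_2, \ldots, e_k, 0, \ldots, 0)$ independently of $u$, yielding $\nielsen_n(G) = 1$. If instead $n - 1 = k$, the chain $n > \rk(G) \ge k$ forces $k = \rk(G)$; Proposition~\ref{PropCyclicDecMaxRank} then identifies $G$ with a finite Abelian group of the form $\Z_d^{\rk(G)} \times C$, and Theorem~\ref{ThNielsenAbel} gives $\nielsen_n(G) = 1$ directly since $n > \rk(G)$.

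The point requiring most care is ensuring that in the Whitehead step one can work with an honest $u \in R^\times$ rather than a mere representative of a class in $\Lambda^\times = (R/\ia_1)^\times$; this is handled in case $(i)$ by using the trailing zero column to move the indeterminacy into a position annihilated by $\ia_1$ before applying the diagonal identity. Establishing the canonical form itself in case $(ii)$, where only the $\GE$-hypothesis on $R$ is available, reduces by induction on $k$ to the $\Gamma_{n-1}(R)$-transitivity on $\U_{n-1}(R)$, which follows from completability of $R$ (Lemma~\ref{LemCompletable}, as $R$ is a homomorphic image of $\ZX$) combined with the identity $\GL_{n-1}(R) = \D_{n-1}(R^\times) \E_{n-1}(R)$ and a Whitehead normalization of the resulting $(1,1)$-unit.
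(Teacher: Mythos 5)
Your strategy is essentially the paper's: use Theorem~\ref{ThReductionToARow}$(i)$ (valid since $n > \rk(G) \ge \rk(G_{ab})$) to reduce to $a$-rows, then show that $\U_{n-1}(M)$ is a single $\Gamma_{n-1}(R)$-orbit, treating the extremal case $\rk_R(M) = \rk(G)$ separately. For $n-1 > \rk_R(M)$ your arguments are sound: in case $(ii)$ your ``induction on $k$'' is precisely the paper's column reduction of $\Mat(\mb)$ via completability (Lemma~\ref{LemCompletable}) and the $\GE$-property, and in case $(i)$ your canonical form plus the stabilization trick in the trailing zero slot is a reasonable unpacking of the citation \cite[Corollary B]{Guy17} that the paper uses as a black box. (Two small cautions: in case $(ii)$ you cannot invoke \cite{Guy17} for the canonical form, since $R$ is only a $\GE$-ring and not assumed quasi-Euclidean --- your closing sentence supplies the right substitute, so this is presentational; and the passage from $u \in \Lambda^{\times}$ to an honest elementary reduction over $R$ deserves to be written out, though the idea of exploiting the zero slot is correct.)

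The genuine gap is the sub-case $n - 1 = k = \rk_R(M)$ under hypothesis $(i)$. You assert that Proposition~\ref{PropCyclicDecMaxRank} ``identifies $G$ with a finite Abelian group of the form $\Z_d^{\rk(G)} \times C$.'' It does not: its conclusion concerns $G/\ia M$ with $\ia = \ia_1 + \cdots + \ia_k = \ia_1$, the largest invariant factor, and $G$ itself is Abelian only when $\ia = 0$, i.e.\ when $M$ is free --- which is exactly the situation already handled by case $(ii)$, and the only situation in which the paper invokes that proposition. When $\ia_1 \neq 0$ the group $G$ can be non-Abelian while still satisfying $\rk_R(M) = \rk(G)$: take $C \simeq \Z_2$, $R = \Z_3\br{C} \simeq \Z_3 \times \Z_3$ and $M = R/(1-\alpha)R \oplus R$; here $R$ is quasi-Euclidean, the invariant factors are $(1-\alpha)R \supset 0$, one checks $\rk_R(M) = \rk(G) = 2$, yet $\alpha$ acts non-trivially on the free summand, so Theorem~\ref{ThNielsenAbel} cannot be applied to $G$. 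In this regime $C$ is forced to be finite, so only surjectivity (not bijectivity) of $\Phi_a$ is available, and $\U_{n-1}(M)/\Gamma_{n-1}(R)$ is in general in bijection with $(R/\ia_1)^{\times}/T_{\Lambda}$, which need not be a singleton; showing that the distinct row orbits nevertheless collapse to one Nielsen class requires an argument you have not given. The paper sidesteps the issue by citing \cite[Corollary B]{Guy17} for the whole of case $(i)$; if you want your self-contained derivation to stand, you must supply a separate treatment of this boundary sub-case rather than borrow the Abelian reduction from the free situation.
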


\begin{proof}
Suppose first that $(i)$ holds. By \cite[Corollary B]{Guy17}, the set \\
$\U_{n - 1}(M)/\Gamma_{n - 1}(R)$ is reduced to one element and Theorem \ref{ThMTimesC} implies $\nielsen_n(G) = 1$.
Suppose now that $(ii)$ holds. If $\rk_R(M) = \rk(G)$, then $G$ is Abelian by Proposition \ref{PropCyclicDecMaxRank} and the result follows from Theorem \ref{ThNielsenAbel}. Assume now that $M \simeq R^{\rk(G) - 1}$, let $k = \rk(G)$ and $n > k$. 
As the result certainly holds if $G$ is cyclic, we can assume moreover that $k \ge 2$.
Let us show that $\U_{n - 1}(M)/\E_{n - 1}(R)$ is made of a single orbit.
For $\mb \in \U_{n - 1}(M)$, let 
$\Mat(\mb)$ be the $(k -1)$-by-$(n -1)$ matrix whose columns are the components of $\mb$.
Since $R$ is a $\GE$-ring, there is $E \in \E_{n - 1}(R)$ such that 
\begin{equation} \label{EqMatM}
\Mat(\mb)E = 
\begin{pmatrix} 1 & 0 \\  A & B \end{pmatrix}
\end{equation}
where $A$ is a $(k - 2)$-by-$1$ matrix and $B$ is a $(k - 2)$-by-$(n -2)$ matrix.
If $k = 2$, then $\mb$ has been reduced to a standard unimodular row. Otherwise, let  $\mb' = \mb E$. Since $\mb'$ generates $M$, we can find a column vector $V = \begin{pmatrix} v_1 \\ \vdots \\ v_{n - 1} \end{pmatrix}$ such that $\Mat(\mb')V = \begin{pmatrix} 1 \\ 0 \end{pmatrix}$. Combining the latter identity with (\ref{EqMatM}), we deduce that $v_1 = 1$ and subsequently 
$\Mat(\mb')P = 
\begin{pmatrix} 1 & 0 \\  0 & B \end{pmatrix}$ where $P = E_{1,2}(v_2) \cdots E_{1, n -1}(v_{n - 1})$. By iterating this procedure, we reduce 
$\Mat(\mb)$ to $\begin{pmatrix} I_{k - 1} & 0 \end{pmatrix}$ through elementary column reduction operations. Consequently, 
$\U_{n -1}(M)/\E_{n - 1}$ contains a single orbit. Theorem \ref{ThMTimesC} eventually implies that $\nielsen_n(G) = 1$.
\end{proof}

\subsection{T-systems} \label{SecTsys}
In this section, we prove results on the $\T$-systems of $G = M \rtimes_{\alpha} C$ under the assumption that $M$ is free over $R$.
 These results will be specialized in Section \ref{SecT2} so as to prove Theorem \ref{ThT2}. We present first all the definitions needed for describing $\Aut(G)$.
Let $c \mapsto \ov{c}$ be the restriction to $C$ of the natural map $\ZC \twoheadrightarrow R$.
We call $d:C \longrightarrow M$ a \emph{derivation} if $d(cc') = d(c) + \overline{c}d(c')$  holds for every $(c,c') \in C^2$.
Given a derivation $d$, we denote by $X_d$ the automorphism of $G$ defined by 
$$mc \mapsto m d(c) c$$
For $t \in \Aut_R(M)$, denote by $Y_t$ the automorphism of $G$ defined by 
$$mc \mapsto t(m)c$$

The following lemma underlines the link between $\Der(C, M)$, the $R$-module of derivations, and the automorphisms of $G$ which leave $M$ point-wise invariant.
\begin{lemma} \label{LemDerivation}
Let $m \in M$. Then the following are equivalent:
\begin{itemize}
\item[$(i)$] $\nu(G) m = 0$.
\item[$(ii)$] There exists $d \in \Der(C, M)$ such that $d(a) = m$.
\item[$(iii)$] There exists $\phi \in \Aut(G)$ such that $\phi(a) = ma$.
\end{itemize} 
If one of the above holds, then the derivation $d$ in $(ii)$ is uniquely defined by $d(a^k) = \partial_{\alpha}(k)m$ for every $k \in \Z$.
If in addition the restriction to $M$ of the automorphism $\phi$ in $(iii)$ is the identity, then $\phi(mc) = md(c)c$ for every $(m , c) \in M \times C$.
\end{lemma}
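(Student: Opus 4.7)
The plan is to prove the cycle $(i) \Rightarrow (ii) \Rightarrow (iii) \Rightarrow (i)$, and then read off the uniqueness of $d$ and the explicit form of $\phi$ from the argument. For $(i) \Rightarrow (ii)$, I would \emph{define} $d$ on the powers of $a$ by the announced formula $d(a^k) \Doteq \partial_{\alpha}(k) m$ and check that this satisfies the derivation condition. The key observation is the cocycle identity
$$
\partial_{\alpha}(k + l) \;=\; \partial_{\alpha}(k) + \alpha^k \partial_{\alpha}(l),
$$
which is immediate from the piecewise definition of $\partial_{\alpha}$ (including the case where $k$ or $l$ is negative) and which is exactly the derivation law applied to $d(a^k a^l)$. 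If $C$ is infinite, this already yields a derivation. If $C$ is finite of order $|C|$, well-definedness on $C$ requires $d(a^{|C|}) = \partial_{\alpha}(|C|) m = \nu(G) m$ to vanish, which is precisely hypothesis $(i)$.

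For $(ii) \Rightarrow (iii)$, I take $\phi \Doteq X_d$ as defined in the paragraph preceding the lemma. A direct expansion in the semidirect product, using that conjugation by $c$ on $M$ is multiplication by $\bar c$, shows that both sides of $X_d((m_1c_1)(m_2c_2)) = X_d(m_1c_1) X_d(m_2c_2)$ equal $(m_1 + \bar{c_1}(m_2) + d(c_1) + \bar{c_1}\, d(c_2)) c_1 c_2$, the equality being exactly the derivation law. The map $X_d$ is the identity modulo $M$ and the identity on $M$, hence injective; surjectivity of $X_d$ on each coset $Mc$ follows from its injectivity on that coset, so $X_d \in \Aut(G)$ and $\phi(a) = d(a) a = ma$. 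For $(iii) \Rightarrow (i)$, in the infinite case $\nu(G) = 0$ by convention. In the finite case, I apply $\phi$ to the relation $a^{|C|} = 1_G$ and use the identity $(m a^k)^l = \partial_{\alpha^k}(l) m \cdot a^{kl}$ stated at the opening of Section~\ref{SecMC}, with $k=1$ and $l = |C|$, to get $1_G = \phi(a^{|C|}) = (ma)^{|C|} = \nu(G) m \cdot a^{|C|} = \nu(G) m$.

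The uniqueness of $d$ in $(ii)$ follows from $d(1) = d(1\cdot 1) = 2 d(1)$, which forces $d(1) = 0$, and then from a routine induction on $|k|$ using the derivation law applied to $a^{k+1} = a \cdot a^k$ (and $1 = a^{-k} a^k$ for negative indices), recovering $d(a^k) = \partial_{\alpha}(k) m$. For the closing assertion, suppose $\phi$ restricts to the identity on $M$ and satisfies $\phi(a) = ma$; writing $c = a^k$, the identity $(ma)^k = \partial_{\alpha}(k) m \cdot a^k = d(c)\, c$ gives $\phi(mc) = \phi(m)\phi(a)^k = m \, d(c)\, c$, as required. I expect the only delicate point to be the bookkeeping around negative powers when defining $d$ and verifying the cocycle identity for $\partial_{\alpha}$ in that range; once that identity is established, the remainder of the proof is a direct unwinding of the semidirect product structure.
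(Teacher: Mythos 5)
Your proof is correct. The paper omits the proof of Lemma \ref{LemDerivation} entirely (it is stated with only a $\square$), and your argument --- the cycle $(i)\Rightarrow(ii)\Rightarrow(iii)\Rightarrow(i)$ built on the cocycle identity $\partial_{\alpha}(k+l)=\partial_{\alpha}(k)+\alpha^{k}\partial_{\alpha}(l)$, the verification that $X_d$ is a homomorphism precisely because of the derivation law, and the evaluation of $(ma)^{\cC}=\nu(G)m\cdot a^{\cC}$ --- is exactly the routine unwinding of the semidirect product structure that the author evidently intends the reader to supply.
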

$\square$

We denote by $A(C)$ the subgroup of the automorphisms of $C$ induced by automorphisms of $G$ preserving $M$.
The following result is referred to in \cite[Proposition 4]{Guy12} where it is a key preliminary to the study of two-generated  $G$-limits in the space of marked groups.
\begin{proposition} \label{PropGenAut}
Assume $M$ is a free $R$-module and $\nu(G) = 0$.
Let $n = \rk(G)$ and let $\gb, \gb' \in \V_n(G)$.
Then the following are equivalent:
\item[$(i)$] $\gb$ and $\gb'$ are related by an automorphism of $G$ preserving $M$,
\item[$(ii)$] $\sigma(\gb)$ and $\sigma(\gb')$ are related by an automorphism in $A(C)$.
\end{proposition}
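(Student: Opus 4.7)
The implication $(i) \Rightarrow (ii)$ is immediate, since any $\phi \in \Aut(G)$ preserving $M$ induces an automorphism $\phi_C \in A(C)$ by definition, and $\phi(\gb) = \gb'$ yields $\sigma(\gb') = \phi_C(\sigma(\gb))$. For the converse, my plan is to first lift a witness $\psi_C \in A(C)$ of $(ii)$ to some $\psi \in \Aut(G)$ preserving $M$ and replace $\gb$ by $\psi(\gb)$; this reduces the problem to the case $\sigma(\gb) = \sigma(\gb')$. Writing $g_i = m_i c_i$ and $g_i' = m_i' c_i$ with $c_i = a^{k_i}$, I then seek $\phi$ preserving $M$ and inducing the identity on $C$ such that $\phi(g_i) = g_i'$ for all $i$. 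A direct verification shows that any such $\phi$ has the form $\phi(mc) = (t(m) + f(c))c$ for a unique pair $(t, f) \in \Aut_R(M) \times \Der(C, M)$, and the hypothesis $\nu(G) = 0$ combined with Lemma \ref{LemDerivation} identifies $\Der(C, M)$ with $M$ through $f \mapsto v \Doteq f(a)$, so that $f(c_i) = \partial_{\alpha}(k_i)\, v$. Thus the existence of $\phi$ becomes the solvability of the system $t(m_i) + \partial_{\alpha}(k_i)\, v = m_i'$ for $i = 1, \dots, n$ in the unknowns $t \in \Aut_R(M)$ and $v \in M$.

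Next I observe that $n = \rk(G)$ and $\nu(G) = 0$ force the free $R$-module $M$ to have rank exactly $n - 1$: Lemma \ref{LemNu} gives $\Ni_n(a)$, so $M$ is generated by $n - 1$ elements and its rank is at most $n - 1$, while the elementary inequality $\rk(G) \le \rk_R(M) + 1$ gives the reverse bound. Fixing a basis of $M \simeq R^{n-1}$ and encoding $t$ by its matrix $T \in \GL_{n-1}(R)$, the system collapses into the single matrix identity $\bigl(\begin{smallmatrix} T & v \\ 0 & 1 \end{smallmatrix}\bigr)\, \tilde{B}_{\gb} = \tilde{B}_{\gb'}$, where $\tilde{B}_{\gb} \in \Mat_n(R)$ is the $n \times n$ matrix with $j$-th column $\bigl(\begin{smallmatrix} m_j \\ \partial_{\alpha}(k_j) \end{smallmatrix}\bigr)$, and $\tilde{B}_{\gb'}$ is defined analogously (sharing the same bottom row with $\tilde{B}_{\gb}$, since $\sigma(\gb) = \sigma(\gb')$).

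The crux is to show that $\det \tilde{B}_{\gb} \in R^{\times}$ for every $\gb \in \V_n(G)$. My plan is to Nielsen-reduce to an $a$-row $(\mb, a)$, for which $\mb \in \U_{n-1}(M)$ is automatically a basis of $R^{n-1}$, so $\tilde{B}_{\gb}$ is block upper triangular with diagonal blocks the change-of-basis matrix of $\mb$ and $1$, making $\det \tilde{B}_{\gb}$ a unit. I then plan to check, by a direct computation, that $L_{ij}$ and $I_i$ act on $\tilde{B}_{\gb}$ through the column operations $\mathrm{col}_i \mapsto \alpha^{k_j} \mathrm{col}_i + \mathrm{col}_j$ and $\mathrm{col}_i \mapsto -\alpha^{-k_i} \mathrm{col}_i$ respectively, so they multiply $\det \tilde{B}_{\gb}$ by a unit of $R$; together with $\Ni_n(a)$ this yields the claim. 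Granted this, both $\tilde{B}_{\gb}$ and $\tilde{B}_{\gb'}$ are invertible, the unique solution $\bigl(\begin{smallmatrix} T & v \\ 0 & 1 \end{smallmatrix}\bigr) = \tilde{B}_{\gb'} \tilde{B}_{\gb}^{-1}$ automatically has last row $(0, \dots, 0, 1)$ because $\tilde{B}_{\gb}$ and $\tilde{B}_{\gb'}$ share their bottom rows, and $\det T = \det \tilde{B}_{\gb'} / \det \tilde{B}_{\gb} \in R^{\times}$ ensures $T \in \GL_{n-1}(R)$, furnishing the desired automorphism $\phi$. The delicate step I expect is this column-operation bookkeeping for $L_{ij}$ and $I_i$; once it is in place, the remainder of the argument is essentially linear algebra.
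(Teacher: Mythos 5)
Your proof is correct, but its second half follows a genuinely different route from the paper's. After the common first reduction (lifting a witness in $A(C)$ to an $M$-preserving automorphism of $G$ so as to assume $\sigma(\gb)=\sigma(\gb')$), the paper applies one further \emph{simultaneous} Nielsen transformation: by Theorem \ref{ThNielsenAbel} there is $\psi\in\Aut(F_n)$ with $\sigma(\gb)\psi=(1_{C^{n-1}},a)$, and replacing $\gb,\gb'$ by $\gb\psi,\gb'\psi$ (which preserves the property of being related by an $M$-preserving automorphism, since the two actions commute) puts both vectors in the form $(\mb,ma)$, $(\mb',m'a)$; Lemma \ref{LemGeneratorsOfM} then makes $\mb,\mb'$ bases of $M$, and the required automorphism is written down at once as a composite of a $Y_t$ with $t$ matching the two bases and an $X_d$ with $d(a)=m'-m$. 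You instead keep the $\sigma$-images general, translate the existence of $\phi$ into the solvability of a linear system over $R$, and prove the coefficient matrix $\tilde{B}_{\gb}$ invertible by showing that its determinant is multiplied by a trivial unit under each elementary Nielsen move and is a unit for $a$-rows. Your bookkeeping is right: the column operations you state are exactly the cocycle identity $\partial_{\alpha}(k+k')=\partial_{\alpha}(k')+\alpha^{k'}\partial_{\alpha}(k)$ and the formula $(ma^{k})^{-1}=(-\alpha^{-k}m)a^{-k}$, and all auxiliary facts you invoke are available (Lemma \ref{LemNu} for $\Ni_n(a)$, Lemma \ref{LemDerivation} for the derivation attached to $v$, the inequality $\rk(G)-1\le\rk_R(M)$ for the rank of $M$, and the fact that $\nu(G)=0$ makes $\partial_{\alpha}(k_j)$ independent of the chosen exponent even when $C$ is finite). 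The paper's argument is shorter; what yours buys is an explicit invariant: for $n=2$ and $M\simeq R$ your $\det\tilde{B}_{\gb}$ is precisely the quantity $D_a(\gb)$ of (\ref{EqDa}), so your ``crux'' step is in effect an $n$-variable generalization of Lemma \ref{LemmaDelta}.
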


\begin{proof}
Clearly, assertion $(i)$ implies $(ii)$. Let us prove the converse. Replacing, if needed, $\gb$ by $\phi \gb$ for some automorphism $\phi$ of $G$ that preserves $M$, we can assume without loss of generality that 
$\sigma(\gb) = \sigma(\gb')$. 
Replacing $\gb$ and $\gb'$ by $\gb \psi$ and $\gb' \psi$ respectively for some $\psi \in \Aut(F_n)$, we can also assume that 
$\sigma(\gb) = \sigma(\gb') = (1_{C^{n - 1}}, a)$ 
and hence write $\gb = (\mb, ma)$ and $\gb' = (\mb', m' a)$ with $\mb, \mb' \in M^{n - 1}$ and $m, m' \in M$. 
By Lemma \ref{LemGeneratorsOfM}, the rows $\mb$ and $\mb'$ are bases of $M$.
By Lemma \ref{LemDerivation}, there is $d' \in \Der(C, M)$ such that $d'(a) = m - m'$. Let $t \in \Aut_R(M)$ be defined by $t(\mb)=\mb'$. Then we have 
$\gb = Y_tX_{d'} \gb'$, which proves the result.
\end{proof}

Recall that $G$ is said to have property $\Ni_n(a)$ if every of its generating $n$-vectors can be Nielsen reduced to an $a$-row, i.e., a vector of the form $(\mb, a)$ with $\mb \in \U_{n -1}(M)$.

\begin{theorem} \label{ThOneTSystem}
Let $n = \rk(G)$ and assume that $M$ is free over $R$. If moreover $\Ni_n(a)$ holds or $\rk_R(M) = n$,
then $G$ has only one $\T_n$-system.
\end{theorem}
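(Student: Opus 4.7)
The plan is to split the proof into two cases according to which of the two hypotheses is in force. These cases are essentially disjoint: if $\Ni_n(a)$ holds, every generating $n$-vector can be reduced to an $a$-row $(\mb,a)$ with $\mb \in \U_{n-1}(M)$, whence $\rk_R(M) \le n - 1$. Combined with the bound $\rk_R(M) \in \{n-1,n\}$ observed after Lemma \ref{LemRankOfM}, this forces $\rk_R(M) = n - 1$. So the hypothesis $\rk_R(M) = n$ is handled separately, and in the other case we automatically have $M \simeq R^{n-1}$.

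In the case $\rk_R(M) = n$, the freeness assumption gives $M \simeq R^n$, so Proposition \ref{PropCyclicDecMaxRank} applies with all cyclic factors equal to $R$ (i.e.\ $\ia = 0$). Its conclusion says that $\nu(G)$ is a unit of $R$. Since the identity $(1-\alpha)\nu(G) = 0$ holds by definition of the norm element, multiplication by $\nu(G)^{-1}$ yields $\alpha = 1$ in $R$, so $C$ acts trivially on $M$ and $G \simeq M \times C$ is Abelian. Theorem \ref{ThNielsenAbel} then gives $\tsys_n(G) = 1$ immediately, since $n = \rk(G)$.

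In the case $\Ni_n(a)$, I would argue directly. Given $\gb, \gb' \in \V_n(G)$, apply $\Ni_n(a)$ to obtain $\psi, \psi' \in \Aut(F_n)$ with $\gb\psi = (\mb, a)$ and $\gb'\psi' = (\mb', a)$ for some $\mb,\mb' \in \U_{n-1}(M)$. Since $M \simeq R^{n-1}$ is free of rank $n-1$, any unimodular row of length $n-1$ is a basis: the surjection $R^{n-1} \twoheadrightarrow M$ sending the standard basis to $\mb$ is an endomorphism of a finitely generated module over a commutative ring, hence an isomorphism (as recalled in Section \ref{SecNotation}). Let $t \in \Aut_R(M)$ be the unique $R$-automorphism carrying $\mb$ to $\mb'$. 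Then the automorphism $Y_t$ of $G$, which acts on $M$ by $t$ and fixes $a$, maps $(\mb, a)$ to $(\mb', a)$. Consequently $Y_t\, \gb\psi = \gb'\psi'$, placing $\gb$ and $\gb'$ in the same $\Aut(G) \times \Aut(F_n)$-orbit.

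The only mild subtlety is the ``unimodular of length $n-1$ implies basis'' step in the free case, which is really a standard fact about surjective endomorphisms. Everything else is a direct assembly of Proposition \ref{PropCyclicDecMaxRank}, Theorem \ref{ThNielsenAbel}, the definition of the $\Aut(G)$-automorphisms $Y_t$ from the beginning of Section \ref{SecTsys}, and the hypothesis $\Ni_n(a)$.
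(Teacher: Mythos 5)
Your proof is correct and follows essentially the same route as the paper: the case $\rk_R(M)=n$ is dispatched by Proposition \ref{PropCyclicDecMaxRank} together with Theorem \ref{ThNielsenAbel}, and the case $\Ni_n(a)$ by reducing to $a$-rows and applying the automorphism $Y_t$ attached to the $R$-automorphism carrying one unimodular row (necessarily a basis of $M\simeq R^{n-1}$) to the other. The only additions are welcome bits of bookkeeping the paper leaves implicit, namely checking $\rk_R(M)=n-1$ under $\Ni_n(a)$ and unwinding why $G$ is Abelian when $\rk_R(M)=n$.
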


\begin{proof} 
Assume $\Ni_n(a)$ holds.
Given an $R$-basis $\mathbf{e}$ of $M$, we shall prove that any $\gb \in \V_n(G)$ is in the same $\T_n$-system as 
$(\eb, a)$. Since $\Ni_n(a)$ holds, we can assume without loss of generality that $\gb$ is of the form 
$(\mb, a)$ with  $\mb \in \U_{n -1}(M)$.
The $R$-endomorphism $t$ mapping $\mathbf{e}$ to $\mb$ is an $R$-isomorphism.
Thus $(\mb, a) = Y_t(\eb, a)$.
Let us assume now that $\rk_R(M) = n$ holds. The group $G$ is then a finite Abelian group by Proposition \ref{PropCyclicDecMaxRank}. 
Thus the result follows from Theorem \ref{ThNielsenAbel}.
\end{proof}

We denote by $A'(C)$ the subgroup of $\Aut(C)$ generated by $A(C)$ and the automorphism of $C$ which maps $a$ to $a^{-1}$.
Here is our most general result regarding the number of $\T$-systems.
\begin{theorem} \label{ThTSystems}
Let $n = \rk(G)$ and assume that $M$ is free over $R$.
Then $\tsys_n(G) \le \vert \Aut(C) / A'(C) \vert$, with equality if $M$ is a characteristic subgroup of 
$G$ and $M_C$ is isomorphic to $\Z^{n - 1}$.
\end{theorem}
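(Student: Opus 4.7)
If $\rk_R(M) = n$, Proposition~\ref{PropCyclicDecMaxRank} applied to $M \simeq R^n$ shows that $G$ is abelian (finite abelian unless $C$ is trivial), so Theorem~\ref{ThNielsenAbel} gives $\tsys_n(G) = 1$ and the inequality is trivial. I therefore assume from now on that $\rk_R(M) = n - 1$, equivalently $M \simeq R^{n-1}$; by Corollary~\ref{CorNC} this forces property $\Ni_n(C)$. Fix an $R$-basis $\eb$ of $M$ and define
$$
\Psi : \Aut(C)/A'(C) \longrightarrow \V_n(G)/(\Aut(G) \times \Aut(F_n)), \qquad [\beta] \mapsto [(\eb, \beta(a))].
$$
The plan is to prove that $\Psi$ is well defined and surjective, which yields $\tsys_n(G) \le |\Aut(C)/A'(C)|$, and injective under the additional hypotheses, which yields the equality.

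For well-definedness, the automorphism $a \mapsto a^{-1}$ of $C$ is realized by the Nielsen transformation $I_n$ sending $(\eb, a^k)$ to $(\eb, a^{-k})$. Given $r \in A(C)$ and a lift $\phi \in \Aut(G)$ with $\phi(M) = M$ and $\bar\phi(a) = a^r$, the identity $\phi(\alpha x) = \alpha^r \phi(x)$ for $x \in M$, together with the fact that $r$ is a unit modulo $|C|$ (so $\alpha^r$ generates the same subring $R$ over $\Z$ as $\alpha$), ensures $\phi(\eb) \in \U_{n-1}(M)$. Writing $\phi(a^k) = m'\,a^{rk}$ via $(ma^r)^k = \partial_{\alpha^r}(k)\,m\,a^{rk}$, Lemma~\ref{LemGeneratorsOfM}.$(iii)$---reformulated for the generator $a^{rk}$ by Corollary~\ref{CorNc}---reduces $\phi(\eb, a^k)$ by Nielsen transformations to the $a^{rk}$-row $(\phi(\eb), a^{rk})$, which by Theorem~\ref{ThOneTSystem} applied with $a^{rk}$ as the distinguished generator sits in the same $\T_n$-system as $(\eb, a^{rk})$. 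Surjectivity of $\Psi$ follows from $\Ni_n(C)$ and another application of Theorem~\ref{ThOneTSystem} to the generator $c$ of the $c$-row obtained by Nielsen reduction.

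For equality, assume $M$ is characteristic in $G$ and $M_C \simeq \Z^{n-1}$. When $C$ is infinite, $|\Aut(C)/A'(C)| = 1$ and $\tsys_n(G) = 1$ by Lemma~\ref{LemNu} and Theorem~\ref{ThOneTSystem}, so equality is trivial; assume $C$ finite. Take $\phi \in \Aut(G)$ and $\psi \in \Aut(F_n)$ with $\phi(\eb, a^k)\psi^{-1} = (\eb, a^l)$. Characteristicity of $M$ gives $\phi(M) = M$, so we may write $\phi(a) = m a^r$ with $r \in A(C)$; the image of $a$ in $G_{ab}$ has order $|C|$ while $\bar\phi(a) = \pi_C(m) + ra$ in $G_{ab} = M_C \oplus C$, so torsion-freeness of $M_C$ forces $\pi_C(m) = 0$, whence $\bar\phi$ acts block-diagonally as $\diag(A, r)$ on $G_{ab}$ with $A \in \GL_{n-1}(\Z)$. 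A direct computation of the Diaconis-Graham determinant of Corollary~\ref{CorDet}, in the basis $(\pi_C(\eb), a)$ of $G_{ab}$ (whose invariant-factor data is $(|C|, 0, \dots, 0)$), shows that this determinant evaluates to $\pm k \pmod{|C|}$ on $\pi_{ab}(\eb, a^k)$ and is multiplied by $\pm r$ under $\bar\phi$. Corollary~\ref{CorDet} then yields $\pm l \equiv \pm rk \pmod{|C|}$, i.e., $lk^{-1} \in A'(C)$, so $[\beta_k] = [\beta_l]$. I anticipate that the two steps requiring the most care are the semi-linearity argument securing $\phi(\eb) \in \U_{n-1}(M)$, and the verification that $\bar\phi$ is block-diagonal on $G_{ab}$---the latter being precisely where the hypothesis $M_C \simeq \Z^{n-1}$ is indispensable.
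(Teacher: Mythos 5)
Your proof is correct and follows essentially the same route as the paper: reduce to the case $\rk_R(M)=n-1$ with $C$ finite, use reduction to $c$-rows plus the fact that unimodular $(n-1)$-rows of a free rank-$(n-1)$ module are bases to bound $\tsys_n(G)$ by $\vert\Aut(C)/A'(C)\vert$, and use the structure of $\Aut(G)$ (Lemma~\ref{LemAutG}) together with the determinant invariant on $G_{ab}$ for the reverse inequality. The only cosmetic difference is that you invoke Corollary~\ref{CorDet} directly where the paper routes the same computation through Proposition~\ref{PropReductionToRowFree}.
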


In order to prove the above theorem, we will use this simple variation on results found in \cite{Sze04}.
\begin{lemma} \label{LemAutG}
Let $\Aut_{\Z}(M)$ be the group of $\Z$-automorphisms of $M$.
\begin{itemize}
\item[$(i)$]
Let $(\tau, \theta) \in \Aut_{\Z}(M) \times \Aut(C)$ such that $\theta(a) = a^k$ and
$$\tau(cmc^{-1}) = \theta(c)\tau(m)\theta(c)^{-1} \text{ for all } (m, c) \in M \times C.$$
Then $Y_{\tau, \theta}: mc \mapsto \tau(m) \theta(c)$ is an automorphism of $G$ 
and the map $\alpha \mapsto \alpha^k$ induces a ring automorphism $\overline{\theta}$ of $R$ which satisfies
$$\tau(rm) = \overline{\theta}(r)\tau(m) \text{ for all } (r, m) \in R \times M.$$
\item[$(ii)$]
Let $d \in \Der(C, M)$, $t \in \Aut_R(M)$ and $(\tau, \theta)$ as in $(i)$. Then we have $\tau^{-1} \circ d \circ \theta \in \Der(C, M)$, 
$\tau^{-1} \circ t \circ \tau \in \Aut_R(M)$ and
$$Y_{\tau, \theta} X_d Y_{\tau, \theta}^{-1} = X_{\tau^{-1} \circ d \circ \theta}, \quad
Y_{\tau, \theta} Y_t Y_{\tau, \theta}^{-1} = Y_{\tau^{-1} \circ t \circ \tau}.$$
\item[$(iii)$]
Let $\phi \in \Aut(G)$ such that $\phi(M) = M$. Then there is $d \in \Der(C, M)$ and $(\tau, \theta)$ as in $(i)$, such that 
$\phi = X_d Y_{\tau, \theta}$. In particular, every automorphism in $A(C)$ is induced by some $Y_{\tau, \theta} \in \Aut(G)$.
\end{itemize}
\end{lemma}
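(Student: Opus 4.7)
For part (i), I plan to verify directly that $Y_{\tau, \theta}$ is a group homomorphism using the multiplication rule $(mc)(m'c') = m \cdot \leftexp{c}{m'} \cdot cc'$ in $G = M \rtimes C$ together with the compatibility hypothesis $\tau(\leftexp{c}{m'}) = \leftexp{\theta(c)}{\tau(m')}$; bijectivity follows by exhibiting $Y_{\tau^{-1}, \theta^{-1}}$ as its two-sided inverse, once one checks that $(\tau^{-1}, \theta^{-1})$ still satisfies the compatibility. For the ring automorphism, I extend $\theta$ $\Z$-linearly to a ring automorphism $\tilde\theta$ of $\ZC$ sending $a$ to $a^k$. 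The compatibility translates, via the $\ZC$-module structure of $M$, into $\tau(rm) = \tilde\theta(r)\tau(m)$ for all $r \in \ZC$ and $m \in M$. Because $M$ is faithful as an $R$-module and $\tau$ is surjective, $\tilde\theta$ preserves $\ann(M)$ and therefore descends to a ring automorphism $\overline\theta$ of $R$ with $\overline\theta(\alpha) = \alpha^k$.

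Part (ii) consists of three routine verifications built on (i). The derivation property of $\tau^{-1} \circ d \circ \theta$ comes from expanding $d(\theta(cc')) = d(\theta(c)) + \overline{\theta(c)}\, d(\theta(c'))$, applying $\tau^{-1}$, and simplifying via $\tau^{-1}(rm) = \overline\theta^{-1}(r)\tau^{-1}(m)$ together with $\overline\theta^{-1}(\overline{\theta(c)}) = \overline c$. The $R$-linearity of $\tau^{-1} \circ t \circ \tau$ is a one-line calculation in the same spirit. Both conjugation formulas are then checked by evaluating each side on a generic element $mc \in G$ and reading off the resulting derivation or $R$-automorphism.

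For part (iii), I set $\tau := \phi|_M \in \Aut_\Z(M)$ and let $\theta \in \Aut(C)$ be the automorphism of $G/M \simeq C$ induced by $\phi$. The compatibility condition of (i) holds because, for $m \in M$ and $c \in C$, the element $cmc^{-1}$ lies in $M$, so $\tau(cmc^{-1}) = \phi(c)\phi(m)\phi(c)^{-1}$; writing $\phi(c) = m_c\, \theta(c)$ with $m_c \in M$ and using that $M$ is Abelian while $\theta(c)\tau(m)\theta(c)^{-1}$ already sits in $M$, the $m_c$-conjugation collapses to $\tau(cmc^{-1}) = \theta(c)\tau(m)\theta(c)^{-1}$. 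Setting $\psi := \phi \circ Y_{\tau, \theta}^{-1}$, one checks that $\psi|_M = \mathrm{id}_M$ and $\sigma(\psi(c)) = c$ for every $c \in C$, so that $\psi(c) = d(c)\, c$ for a unique $d(c) \in M$. The homomorphism identity $\psi(cc') = \psi(c)\psi(c')$ forces $d(cc') = d(c) + \overline c\, d(c')$, that is, $d \in \Der(C, M)$ and $\psi = X_d$, whence $\phi = X_d Y_{\tau, \theta}$. The final clause follows because $X_d$ induces the identity on $G/M \simeq C$. The only technical point requiring real care is the descent of $\tilde\theta$ from $\ZC$ to $R$ in (i), where faithfulness of $M$ as an $R$-module is used essentially; the rest is bookkeeping.
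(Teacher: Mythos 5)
Your proof is correct and follows essentially the same route as the paper, which simply declares (i) and (ii) to be straightforward verifications and proves (iii) by the identical decomposition $\tau = \phi|_M$, $\theta$ the induced automorphism of $C$, and $\phi\,Y_{\tau,\theta}^{-1}$ an automorphism fixing $M$ pointwise and hence of the form $X_d$. The only cosmetic difference is that the paper invokes its Lemma \ref{LemDerivation} to identify $\phi\,Y_{\tau,\theta}^{-1}$ with $X_d$, whereas you re-derive the cocycle condition directly; both arguments are sound.
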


\begin{proof}
The proofs of assertions $(i)$ and $(ii)$ are straightforward verifications. 

$(iii)$. Let $\tau$ be the restriction of $\phi$ to $M$ and let $\theta$ the automorphism of $C$ induced by 
$\phi$. It is easy to check that $(\tau, \theta)$ satisfy the conditions of $(i)$. Let $\phi' = \phi Y_{\tau, \theta}^{-1}$. Then the restriction of $\phi'$ to $M$ is the identity and there is $m \in M$ such that $\phi'(a) = ma$. By Lemma \ref{LemDerivation}, there is $d \in \Der(C, M)$ such that $\phi' = X_d$. 
\end{proof}

\begin{sproof}{Proof of Theorem \ref{ThTSystems}}
If the property $\Ni_a(n)$ holds, or if $\rk_R(M) = n$, then Theorem \ref{ThOneTSystem} implies that $\tsys_n(G) = 1 \le \vert \Aut(C) / A'(C) \vert$. Therefore we can assume, without loss of generality that $\Ni_a(n)$ does not hold and $\rk_R(M) < n$.  

By Theorem \ref{ThMTimesC}, the group $C$ is finite and $M_C$ is isomorphic to $\Z^{n - 1}$.
Using Proposition \ref{PropReductionToRowFree} and reasoning with a basis $\eb$ of $M$ as in the proof of Theorem \ref{ThOneTSystem}.$i$, we see that every generating $n$-vector falls into the $\T_n$-system of $(\eb, a^k)$ for some $k$ coprime with $\cC$. It follows from Lemma \ref{LemAutG}.$ii$ that $(\eb, \theta(a^k))$ 
lies in the $\T_n$-system of $(\eb, a^k)$ for every $\theta \in A'(C)$. Therefore $\tsys_n(G) \le \vert \Aut(C) / A'(C) \vert$.
Assume now that $M$ is a characteristic subgroup of $G$. If $(\eb, a)$ lies in the $\T_n$-system of 
$(\eb, a^k)$ for some $k$ coprime with $\cC$, then we can find $\phi \in \Aut(G)$ such that $\phi(\eb, a)$ is Nielsen equivalent to 
$(\eb, a^k)$. By Lemma \ref{LemAutG}.$ii$, we have 
$\phi(\eb, a) = (\eb', m \theta(a))$ for some basis $\eb'$ of $M$ and some $(m, \theta ) \in M \times A(C)$. 
By Lemma \ref{LemGeneratorsOfM}.$iii$, the vector $(\eb', \theta(a))$ is Nielsen equivalent to $(\eb, a^k)$. Proposition \ref{PropReductionToRowFree} implies that $\theta(a) = a^{\pm k}$, hence there is $\theta' \in A'(C)$ such that $\theta'(a) = a^k$.
\end{sproof}

\section{Nielsen equivalence classes and $\T$-systems of $R \rtimes C$} \label{SecRC}
In this section, we assume that $M \simeq R$, i.e., $G=\langle a,b\rangle$ is a split extension of the form $R \rtimes_{\alpha} C$ with $C = \langle a \rangle$, while $b$ is the identity of the ring $R$ and $a$ acts on $R$ as the multiplication by $\alpha \in R^{\times}$. As usual, $T$ denotes the subgroup of $R^{\times}$ generated by $-1$ and $\alpha$.
\subsection{Nielsen equivalence of generating pairs} \label{SecNielsenPairs}
We prove here the first two assertions of Theorem \ref{ThN2}. We begin with the definition of an invariant of Nielsen equivalence named
$\Delta_a$. Recall that $\pi_{\nu(G) R}$ denotes the natural group homomorphism $R \rtimes C \twoheadrightarrow (R/\nu(G) R) \rtimes C$ as well as the induced map on generating pairs.
If $\nu(G) = 0$, there is a unique derivation $d_a \in \Der(C, R)$ satisfying $d_a(a)=1$.  
For $\gb = (g, g') = (rc, r'c') \in G^2$ with $(r,r') \in R^2$ and $(c,c') \in C^2$, we set
\begin{equation} \label{EqDa}
D_a(\gb) = r d_a(c') - r' d_a(c) \in R
\end{equation}
 It is easily checked that
$\br{g, g'}=(1 -\alpha)D_a(\gb).$
If $\nu(G) \neq 0$, we set further
$$D_a(\gb) = D_a(\pi_{\nu(G) R}(\gb)) \in R/\nu(G) R$$ where the right-hand side is defined as in (\ref{EqDa}).

\begin{lemma} \label{LemmaDelta}
We have $D_a(\gb) \in (R / \nu(G) R)^{\times}$ for every $\gb \in \V_2(G)$. 
\end{lemma}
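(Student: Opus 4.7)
The plan is to show that $D_a$ is essentially an invariant of Nielsen equivalence, reduce $\gb$ to a simple canonical form, and then compute $D_a$ on that form directly.

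First, I would verify by direct computation that under each elementary Nielsen transformation, $D_a$ is multiplied by a trivial unit. For instance, writing $\gb = (rc, r'c')$ and applying $L_{12}$, the new pair is $((r' + \overline{c'}r)c'c, r'c')$, and using the cocycle identity $d_a(c'c) = d_a(c') + \overline{c'}d_a(c)$ a short calculation yields $D_a(\gb L_{12}) = \overline{c'}\, D_a(\gb)$. Entirely analogous computations give $D_a(\gb L_{21}) = \overline{c}\, D_a(\gb)$ and $D_a(\gb I_i) = -\overline{c_i^{-1}}\, D_a(\gb)$. Since each factor lies in $T \subset R^{\times}$, the property ``$D_a(\gb)$ is a unit in $R/\nu(G)R$'' is a Nielsen invariant.

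Second, I would reduce $\gb$ to a standard form. Since $M \simeq R$ we have $\rk_R(M) = 1 < 2$, so Corollary~\ref{CorNC} gives property $\Ni_2(C)$: any $\gb \in \V_2(G)$ is Nielsen equivalent to a $c$-row $(u, c)$ with $u \in \U_1(R) = R^{\times}$ and $c = a^k$ a generator of $C$. By the first step it therefore suffices to establish the conclusion for $(u, c)$.

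Third, I would compute $D_a(u, c) = u\, d_a(c) - 0 \cdot d_a(1) = u\, d_a(c)$. Since $u \in R^\times$, the whole argument reduces to showing that $d_a(c) = \partial_{\alpha}(k)$ is invertible in $R/\nu(G)R$. If $C$ is infinite then $c = a^{\pm 1}$ and $d_a(c) \in \{1, -\alpha^{-1}\}$, trivially a unit. If $\vert C \vert = n < \infty$, pick $k' \in \Z$ with $kk' \equiv 1 \pmod{n}$, say $kk' = 1 + nq$. Using the multiplicative identity $\partial_{\alpha}(kk') = \partial_{\alpha^k}(k')\, \partial_{\alpha}(k)$ and regrouping
\[
\partial_{\alpha}(1 + nq) = 1 + \sum_{j=1}^{q} \alpha^{(j-1)n + 1} \nu(G),
\]
one sees that $\partial_{\alpha}(kk') \equiv 1 \pmod{\nu(G) R}$. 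Hence $\partial_{\alpha}(k)$ is a unit in $R/\nu(G)R$ with inverse $\partial_{\alpha^k}(k')$, which concludes the proof.

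The main potential obstacle is the cyclotomic-flavoured unit computation in the last paragraph; once one notices the grouping of $n$ consecutive powers of $\alpha$ as a multiple of $\nu(G)$, it comes out cleanly. The Nielsen-invariance computation in the first paragraph is routine but essential, since without it one would have to argue directly that the arbitrary pair $(rc, r'c')$ generating $G$ forces $rd_a(c') - r'd_a(c)$ to be a unit, which is considerably less transparent than the reduction to $(u, c)$ above.
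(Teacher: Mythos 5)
Your proof is correct and follows the same strategy as the paper's own: show that the elementary Nielsen transformations multiply $D_a$ by trivial units, reduce $\gb$ to a standard row, and evaluate $D_a$ there. The only divergence is in the endgame: the paper first passes to $R/\nu(G)R$ (where $\nu = 0$, so property $\Ni_2(a)$ holds and the standard row is $(r,a)$ with $D_a(r,a) = r$ a unit on the nose), whereas you reduce to a $c$-row $(u, a^k)$ and supply the extra --- and correct --- verification that $\partial_{\alpha}(k)$ is invertible modulo $\nu(G)R$ when $\gcd(k, \vert C \vert) = 1$.
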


\begin{proof}
We can assume, without loss of generality, that $\nu(G) = 0$.\\
Let $\gb = (ra^k, r'a^{k'}) \in \V_2(G)$ with
$(r,r') \in R^2$ and $(k,k') \in \Z^2$. 
We first observe that
$$ 
\begin{array}{ccc}
D_a(\gb L_{12}) &=& \alpha^{k'} D_a(\gb)\\
D_a(\gb L_{21}) &=& \alpha^{k} D_a(\gb)\\
D_a(\gb I_1) &=&  - \alpha^{-k}D_a(\gb)
\end{array}
$$
Thus $D_a(\gb \Aut(F_2)) = T D_a(\gb)$. 
We know from Lemma \ref{LemGeneratorsOfM} that $\gb$ is Nielsen equivalent to $(r, a)$ for some $r \in R^{\times}$. 
Therefore $D_a(\gb) \in rT$, which shows that $D_a(\gb)$ is invertible.
\end{proof}

\begin{remark} \label{RemarkDaUnicity}
Assume $\nu(G) = 0$. Let $c$ be a generator of $C$ and let $d_c \in \Der(C, R)$ such that $d_c(c) = 1$. 
It is easily checked that 
$d_c = d_c(a) d_a$ and the identity 
$d_c(c) = 1$ implies that $d_c(a) \in R^{\times}$. For such elements $c$ there is thus only one map $D_c$ up to multiplication by a unit of $R$.
\end{remark}

We set $$\Lambda = R / \nu(G) R, \quad T_{\Lambda} = \pi_{\nu(G) R}(T),$$ 
and define the map
$$
\begin{array}{cccc}
\Delta_a:&  \V_2(G) & \rightarrow & \Lambda^{\times}/ T_{\Lambda} \\
           & \gb         & \mapsto                & T_{\Lambda} D(\gb)
\end{array}
$$

In the course of Lemma \ref{LemmaDelta}'s proof we actually showed
\begin{lemma} \label{LemDeltaAInvariant}
The map $\Delta_a$ is $\Aut(F_2)$-invariant.
\end{lemma}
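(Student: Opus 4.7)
The plan is to observe that the computation performed inside the proof of Lemma \ref{LemmaDelta} already does almost all of the work, and that the lemma is really just a repackaging of that computation modulo $\nu(G)R$ and modulo $T$.

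First I would reduce to the case $\nu(G)=0$, since the general case is defined by applying the preceding one to $\pi_{\nu(G)R}(\gb)$ and since $\pi_{\nu(G)R}$ is $\Aut(F_2)$-equivariant (it acts componentwise on tuples in $G^2$). Under this reduction, $\Lambda=R$ and $T_\Lambda=T$, and it suffices to prove that $D_a(\gb\psi)\in T\cdot D_a(\gb)$ for every $\psi\in\Aut(F_2)$.

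Next I would invoke \cite[Proposition 4.1]{LS77} (already cited in the introduction) to the effect that $\Aut(F_2)$ is generated by the elementary Nielsen automorphisms $L_{12}, L_{21}, I_1, I_2$. Since the set $\{\psi\in\Aut(F_2):D_a(\gb\psi)\in T\cdot D_a(\gb)\text{ for all }\gb\in\V_2(G)\}$ is visibly a subgroup of $\Aut(F_2)$ (because $T$ is a subgroup of $R^\times$), it is enough to verify the required behaviour on each of these four generators. For $L_{12}, L_{21}$ and $I_1$ the identities
\[
D_a(\gb L_{12})=\alpha^{k'}D_a(\gb),\quad
D_a(\gb L_{21})=\alpha^{k}D_a(\gb),\quad
D_a(\gb I_1)=-\alpha^{-k}D_a(\gb)
\]
have been established in the proof of Lemma \ref{LemmaDelta}, where $\gb=(ra^k,r'a^{k'})$. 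The remaining generator $I_2$ is handled by a symmetric direct calculation that I would just record: writing $\gb I_2=(ra^k,(-\alpha^{-k'}r')a^{-k'})$ and applying the definition \eqref{EqDa}, one obtains $D_a(\gb I_2)=-\alpha^{-k'}D_a(\gb)$. In each case the multiplier lies in $T$.

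The only potential subtlety, which I do not expect to be an obstacle, is making sure the reduction modulo $\nu(G)R$ really commutes with applying Nielsen transformations; this is immediate because Nielsen transformations are defined by group-theoretic words, and $\pi_{\nu(G)R}$ is a surjective homomorphism $R\rtimes C\twoheadrightarrow R/\nu(G)R\rtimes C$. Passing to the quotient $\Lambda^\times/T_\Lambda$ then yields $\Delta_a(\gb\psi)=T_\Lambda D_a(\gb\psi)=T_\Lambda D_a(\gb)=\Delta_a(\gb)$, which is exactly $\Aut(F_2)$-invariance.
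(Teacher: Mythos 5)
Your proposal is correct and follows essentially the same route as the paper, which proves this lemma by pointing back to the computation in the proof of Lemma \ref{LemmaDelta}: reduce to $\nu(G)=0$ via the $\Aut(F_2)$-equivariance of $\pi_{\nu(G)R}$, then check the multiplier lies in $T$ on the elementary Nielsen generators. Your explicit verification for $I_2$ (the formula $D_a(\gb I_2)=-\alpha^{-k'}D_a(\gb)$ is correct) and your remark that the relevant set of automorphisms is a subgroup merely make explicit what the paper leaves implicit.
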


Here is the last stepping stone to the theorem of this section.
\begin{lemma} \label{LemReductionModuloNuRank2}
Let $c$ be such that $C = \langle c \rangle$ and let $\gb = (r, c)$, $\gb' = (r', c)$ with $(r, r') \in (R^{\times})^2$. Then the following are equivalent:
\begin{itemize}
\item[$(i)$] $\gb$ and $\gb'$ are Nielsen equivalent.
\item[$(ii)$]  $\pi_{\nu(G) R}(\gb)$ and $\pi_{\nu(G) R}(\gb')$ are Nielsen equivalent.
\item[$(iii)$] $\Delta_a(\gb) = \Delta_a(\gb')$.
\end{itemize}
\end{lemma}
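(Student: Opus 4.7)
The plan is to establish the chain $(i) \Rightarrow (ii) \Rightarrow (iii) \Rightarrow (i)$. The first implication is immediate, since $\pi_{\nu(G) R}$ carries each elementary Nielsen transformation of $G$ to one of $\bar G = G/\nu(G)R \simeq \Lambda \rtimes C$. The second follows from the defining identity $\Delta_a(\gb) = \Delta_a(\pi_{\nu(G) R}(\gb))$ combined with Lemma \ref{LemDeltaAInvariant} applied inside $\bar G$, where $\nu(\bar G) = 0$, so the derivation $d_a$ and the resulting class in $\Lambda^{\times}/T_{\Lambda}$ are defined directly.

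The substance is $(iii) \Rightarrow (i)$. From $(iii)$ I extract a relation $r = t r' + \nu(G) s$ with $t \in T$ and $s \in R$. The transformations $\varepsilon_{12}$ and $I_1$ act on a pair $(u, a)$ by multiplying $u$ by an arbitrary element of $T$, so a suitable product of them reduces to the case $t = 1$, leaving the sub-claim $(r' + \nu(G) s, a) \sim (r', a)$ in $G$ (both first components being units of $R$ by hypothesis).

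The module-theoretic key is that $\nu(G) R$ coincides with the cyclic abelian subgroup $\Z\,\nu(G)\,r'$. Indeed, $\alpha \nu(G) = \nu(G)$ forces $\nu(G) R$ to be a homomorphic image of the cyclic ring $M_C = R/(1-\alpha)R$, and the image of $r'$ is a unit in $M_C$ because $r' \in R^{\times}$. Therefore $\nu(G) s = n\, \nu(G)\, r'$ for some integer $n$, giving $r' + \nu(G) s = (1 + n \nu(G)) r'$. I now apply the Nielsen word $L_{21} L_{12}^{n \vert C \vert} L_{21}^{-1}$ to $(r', a)$ and use the identity $(r' a)^{\vert C \vert} = \nu(G) r'$, which follows from $(ma)^l = (\partial_{\alpha}(l) m) a^l$ together with $a^{\vert C \vert} = 1_C$: a direct computation in the semidirect product converts the pair into $((1 + n \nu(G))r',\, -n\nu(G) r' \cdot a) = (r' + \nu(G) s,\, -n\nu(G) r' \cdot a)$. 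Since $r' + \nu(G) s \in R^\times$, the singleton formed by this element generates $R$ as an $R$-module, so Lemma \ref{LemGeneratorsOfM}.$iii$ reduces the second component to $a$.

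The main obstacle is locating this Nielsen word. The naive attempt to modify $g_1 = r'$ by $\nu(G) s$ while keeping $g_2 = a$ fails, since $a^{\vert C \vert} = 1_C$ makes every $L_{12}^{n\vert C \vert}$ acting directly on $(r', a)$ trivial. The remedy is to first alter $g_2$ from $a$ to $r' a$ via $L_{21}$, exploiting that $(r' a)^{\vert C \vert} = \nu(G) r'$ is a nontrivial element of the central subgroup $\nu(G) R \subset Z(G)$; the subsequent $L_{21}^{-1}$ then transfers the modification into the first coordinate, and the final appeal to Lemma \ref{LemGeneratorsOfM}.$iii$ requires the new first component to be a unit, which is guaranteed by the hypothesis on $r$ and $r'$.
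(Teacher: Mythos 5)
Your proof is correct and follows essentially the same route as the paper: the implications $(i)\Rightarrow(ii)\Rightarrow(iii)$ are handled identically, and for $(iii)\Rightarrow(i)$ you use the same core device of loading an element into the second component so that its $\vert C \vert$-th power becomes a $\nu(G)$-multiple which powers of $L_{12}$ then transfer onto the first component, finishing with Lemma \ref{LemGeneratorsOfM}.$iii$. The only cosmetic difference is that the paper inserts the error term $r_{\nu}$ itself into the second slot, whereas you insert $r'$ and invoke the (correct) extra observation that $\nu(G)R = \Z\,\nu(G)\,r'$ to write the error as an integer multiple.
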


\begin{proof}
$(i) \Rightarrow (ii)$. This follows from the $\Aut(F_2)$-equivariance of $\pi_{\nu(G) R}$.

$(ii) \Rightarrow (iii)$. This follows from Remark \ref{RemarkDaUnicity} and Lemma \ref{LemDeltaAInvariant}.

$(iii) \Rightarrow (i)$.
The result is trivial if $\nu(G) = 0$, thus we can assume that $C$ is finite.
By hypothesis, there exist $k \in \Z$, $r_{\nu} \in R$ and $\epsilon \in \{\pm 1\}$ such that 
$r' = \epsilon \alpha^kr + r_{\nu}\nu(G)$. Replacing $\gb'$ by a conjugate if needed, we can assume that $k = 0$. 
Taking the inverse of the first component of $\gb$ if needed, we can moreover assume that $\epsilon = 1$, so that $r' = r + \nu(G) r_{\nu}$. 
Since $r' $ is a unit, we can argue as in the proof of Lemma \ref{LemRankOfM}.$iii$ to get $\psi \in \Aut(F_2)$ such that 
$(r', c) \psi =  (r', r_{\nu} c)$. We have then $(r', c) \psi L_{1, 2}^{-\cC} = (r, r_{\nu} c)$. Since $r$ is a unit, we can cancel $r_{\nu}$ using another automorphism of $F_2$.
\end{proof}

The next lemma will help us determine when $\Delta_a$ is a complete invariant of Nielsen equivalence.

\begin{lemma}[\cite{mathoverflow}] \label{LemSurjectivity}
Let $I \subset R$ be an ideal which is contained in all but finitely many maximal ideals of $R$. 
Then the natural map $R^{\times} \rightarrow (R/I)^{\times}$ is surjective.
\end{lemma}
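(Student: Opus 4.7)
\medskip
\noindent\textbf{Proof plan.} The plan is to start with an arbitrary lift $x \in R$ of the given unit $\bar{x} \in (R/I)^{\times}$ and produce $y \in I$ such that $u = x + y$ lies in $R^{\times}$. Since $u \equiv x \pmod{I}$, such a $u$ will be the desired unit preimage of $\bar{x}$. I will use throughout that an element of $R$ is a unit if and only if it avoids every maximal ideal of $R$.

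The first step is to dispose of the maximal ideals $\mathfrak{m}$ that do contain $I$. For any such $\mathfrak{m}$ and any $y \in I$, we have $x + y \in \mathfrak{m}$ if and only if $x \in \mathfrak{m}$, because $y \in \mathfrak{m}$. But the image of $x$ in $R/\mathfrak{m}$ coincides with the image of $\bar{x}$ under the natural surjection $R/I \twoheadrightarrow R/\mathfrak{m}$, and this image is nonzero since $\bar{x}$ is a unit in $R/I$ and $\mathfrak{m}/I$ is a maximal ideal there. Hence $x \notin \mathfrak{m}$, and therefore $x + y$ automatically avoids every maximal ideal containing $I$, regardless of the choice of $y \in I$.

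It remains to arrange that $x + y$ also lies outside the finitely many maximal ideals $\mathfrak{m}_1, \dots, \mathfrak{m}_n$ of $R$ that do not contain $I$. The main technical step, which I expect to be the sole obstacle, is to show that the natural map $I \to \prod_{i=1}^{n} R/\mathfrak{m}_i$ is surjective. By the Chinese Remainder Theorem this reduces to proving $I + (\mathfrak{m}_1 \cap \cdots \cap \mathfrak{m}_n) = R$, which in turn will follow from the hypothesis $I + \mathfrak{m}_i = R$ for each $i$: writing $1 = a_i + b_i$ with $a_i \in I$ and $b_i \in \mathfrak{m}_i$, expansion of $\prod_{i=1}^{n} (a_i + b_i) = 1$ exhibits $1$ as an element of $I + \mathfrak{m}_1 \cdots \mathfrak{m}_n \subseteq I + \bigcap_i \mathfrak{m}_i$, since every term in the expansion except $\prod_i b_i$ contains at least one factor from $I$.

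Once this surjectivity is in hand, I use that every residue field $R/\mathfrak{m}_i$ contains at least two elements to select $y \in I$ with $y \not\equiv -x \pmod{\mathfrak{m}_i}$ for every $i$. Then $u = x + y$ avoids $\mathfrak{m}_1, \dots, \mathfrak{m}_n$ by construction and every other maximal ideal of $R$ by the first step, so $u \in R^{\times}$ maps to $\bar{x}$, completing the proof.
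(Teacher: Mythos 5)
Your proof is correct and follows essentially the same route as the paper: both arguments rest on the Chinese Remainder Theorem applied to $I$ together with the finitely many maximal ideals not containing it, plus the characterization of units as elements lying outside every maximal ideal (the paper packages the latter as lifting units modulo $J \subseteq \Jac(R)$, you argue it directly). Your version also spells out the comaximality $I + \bigcap_i \mathfrak{m}_i = R$ that the paper's appeal to CRT leaves implicit, which is a worthwhile detail.
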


\begin{proof}
Let $\im_1, \dots, \im_k$ be the maximal ideals of $R$ not containing $I$ and let $J = (\bigcap_i \im_i) \cap I$. 
By the Chinese Remainder Theorem, the map $$\rho: r + J \mapsto (r + I, r + \im_1, \dots, r + \im_k)$$ 
is a ring isomorphism from $R/J$ onto 
$R/I \times R/\im_1 \times \cdots \times R/\im_k$. 
Given $u \in (R/I)^{\times}$ we can find $v = \tilde{u} + J  \in (R/J)^{\times}$ such that
$\rho(v) = (u,  1 + \im_1, \dots, 1 + \im_k)$. 
Hence we have $u = \tilde{u} + I$. As $J \subset \Jac(R)$, we also have $\tilde{u} \in R^{\times}$.
\end{proof}

Given an ideal $I$ of $R$, we denote by $\pi_I$ the natural group epimorphism $R \rtimes_{\alpha} C \twoheadrightarrow (R/I) \rtimes_{\alpha + I} C$. Let us state the main result of this section.

\begin{theorem} \label{ThNielsenRankTwo}
Let $\gb, \gb' \in \V_2(G)$, $\eb \Doteq \pi_{ab}(b, a)$ and $R_C \Doteq R/(1 - \alpha)R$. 
\begin{enumerate}
\item 
The following are equivalent:
\begin{itemize}
\item[$(i)$]
The pairs $\gb$ and $\gb'$ are Nielsen equivalent.
\item[$(ii)$]
The pairs $\pi_I(\gb)$ and $\pi_I(\gb')$ are Nielsen equivalent for every\\
$I \in \left\{ (1 - \alpha)R, \nu(G) R\right\}$.
\item[$(iii)$] $\det_{\eb} \circ \pi_{ab}(\gb) = \pm \det_{\eb} \circ \pi_{ab}(\gb')$ and $\Delta_a(\gb) = \Delta_a(\gb')$, with $\det_{\eb}$ as in Remark \ref{DetE}.
\end{itemize}
\item
If $C$ is infinite or $R_C$ is finite then $\Delta_a$ is surjective and the above conditions are equivalent to
$
\Delta_a(\gb) = \Delta_a(\gb').
$
In this case $\Delta_a$ is a complete invariant of Nielsen equivalence for generating pairs.
\item
If $C$ is finite and $\Gab$ is infinite, then $\nielsen_2(G) = \max(\varphi(\vert C \vert)/2,1)\vert \Lambda^{\times}/T_{\Lambda} \vert$.
\end{enumerate}
\end{theorem}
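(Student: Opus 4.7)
Plan. The implication $(i) \Rightarrow (ii)$ is immediate: since $\pi_I$ is a group homomorphism, it induces an $\Aut(F_2)$-equivariant map on generating pairs, which transports Nielsen equivalence. For $(ii) \Rightarrow (iii)$, observe that $\alpha$ acts trivially on $R_C = R/(1-\alpha)R$, so the quotient $G/(1-\alpha)R\cdot M$ coincides with $G_{ab} = R_C \times C$, and $\pi_{(1-\alpha)R}$ coincides with $\pi_{ab}$. Since $G$ is $2$-generated, $R_C$ is necessarily cyclic, so $G_{ab}$ is a product of two cyclic groups and Corollary \ref{CorDet} transports Nielsen equivalence of $\pi_{ab}(\gb)$ and $\pi_{ab}(\gb')$ to the $\det_\eb$ equality (up to sign). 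Applying this instead to $I = \nu(G)R$, the definition $\Delta_a(\gb) = \Delta_a(\pi_{\nu(G)R}(\gb))$ combined with Lemma \ref{LemDeltaAInvariant} gives $\Delta_a(\gb) = \Delta_a(\gb')$.

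For the main implication $(iii) \Rightarrow (i)$, I split on whether $\Ni_2(a)$ holds. By Theorem \ref{ThReductionToARow}, $\Ni_2(a)$ fails only if $C$ is finite, $R_C \simeq \Z$, and $\vert C \vert \notin \{2,3,4,6\}$. When $\Ni_2(a)$ holds, reduce $\gb, \gb'$ to pairs $(r, a), (r', a)$ with $r, r' \in R^\times$ and conclude via Lemma \ref{LemReductionModuloNuRank2} using $\Delta_a(\gb) = \Delta_a(\gb')$. In the remaining case, Proposition \ref{PropReductionToRowFree} applies (with $M_C \simeq \Z$) and reduces $\gb$ to $(r, a^k)$ with $k = \pm\det_\eb\pi_{ab}(\gb)$, and similarly $\gb' \sim (r', a^{k'})$; the determinant condition then forces $k = \pm k'$, and after applying $I_2$ if necessary we align $k = k'$. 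The $\Delta_a$-equality now reads $r\,\partial_\alpha(k) \equiv \pm\alpha^j\, r'\,\partial_\alpha(k) \pmod{\nu(G)R}$ for some $j \in \Z$; the factor $\partial_\alpha(k)$ is a unit of $\Lambda$ (since $r\partial_\alpha(k) = D_a(r, a^k) \in \Lambda^\times$ by Lemma \ref{LemmaDelta} and $r \in R^\times$), so we may cancel it. A suitable power of $\varepsilon_{12}$ multiplies the first component by $\alpha^{-k}$, and since $\gcd(k, \vert C \vert) = 1$ this gives access to every power of $\alpha$; combined with $I_1$, this absorbs $\pm\alpha^j$ into the first component, reducing us to $r \equiv r' \pmod{\nu(G)R}$ with the same exponent $k$. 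Finally, the proof of Lemma \ref{LemReductionModuloNuRank2} adapts verbatim with $a^k$ in place of $a$, since the key identity $(s a^k)^{\vert C \vert} = \partial_{\alpha^k}(\vert C \vert)\, s = \nu(G)\, s$ still holds under $\gcd(k, \vert C \vert) = 1$.

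For part (2), the hypothesis ($C$ infinite or $R_C$ finite) places us in case (ii) or (iii) of Theorem \ref{ThReductionToARow}, so $\Ni_2(a)$ holds and every generating pair is Nielsen equivalent to one of the form $(r, a)$ with $r \in R^\times$. To prove surjectivity of $\Delta_a$, lift any $u \in \Lambda^\times$ to a unit $\tilde u \in R^\times$ via Lemma \ref{LemSurjectivity}: the relation $(1-\alpha)\nu(G) = 0$ shows that $\nu(G)R$ is contained in every maximal ideal of $R$ not containing $1-\alpha$, and the remaining exceptions correspond to maximal ideals of the finite ring $R_C$ (there are none if $\nu(G) = 0$). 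The pair $(\tilde u, a)$ then generates $G$ (its conjugates by powers of $a$ already generate $R \tilde u = R$) and satisfies $\Delta_a(\tilde u, a) = u T_\Lambda$. Completeness of $\Delta_a$ is now immediate: if $\Delta_a(\gb) = \Delta_a(\gb')$, reduce both to $(r, a)$ and $(r', a)$ via $\Ni_2(a)$ and apply Lemma \ref{LemReductionModuloNuRank2} directly.

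The hard part will be the second half of the $(iii) \Rightarrow (i)$ argument, namely the $\Ni_2(a)$-fails case: matching the exponents $k$ and $k'$ requires Proposition \ref{PropReductionToRowFree} together with the determinant hypothesis, and the subsequent adaptation of Lemma \ref{LemReductionModuloNuRank2} to the exponent $a^k$ demands careful bookkeeping of the Nielsen transformations (notably the $L_{12}^{\vert C \vert}$ trick and the unit-absorption via $\varepsilon_{12}$) to ensure the coprimality $\gcd(k, \vert C \vert) = 1$ is exploited correctly at each step.
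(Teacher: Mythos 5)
Your proposal is correct and follows essentially the same route as the paper: $(i)\Rightarrow(ii)$ by equivariance of $\pi_I$, $(ii)\Rightarrow(iii)$ via the abelian classification and the invariance of $\Delta_a$, $(iii)\Rightarrow(i)$ by reducing to $a$-rows (resp.\ $a^k$-rows via Proposition \ref{PropReductionToRowFree}) and invoking Lemma \ref{LemReductionModuloNuRank2}, and part (2) via Lemma \ref{LemSurjectivity}. The only difference is cosmetic: where the paper disposes of the exponent $k$ by citing Remark \ref{RemarkDaUnicity} ("argue as in the first part with $a$ replaced by $a^k$"), you carry out that adaptation explicitly --- cancelling the unit $\partial_\alpha(k)$, absorbing trivial units through $\varepsilon_{12}$ and $I_1$ using $\gcd(k,\cC)=1$, and checking $\partial_{\alpha^k}(\cC)=\nu(G)$ --- which is a faithful expansion of the same argument.
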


\begin{proof}
$1.(i) \Rightarrow (ii)$. This follows from the $\Aut(F_2)$-equivariance of $\pi_I$.\\
$1.(ii) \Rightarrow (iii)$. We deduce the identity $\det_{\eb} \circ \pi_{ab}(\gb) = \pm \det_{\eb} \circ\pi_{ab}(\gb')$
from Theorem \ref{ThNielsenAbel} and the identity 
$\Delta_a(\gb) = \Delta_a(\gb')$ from Lemma \ref{LemDeltaAInvariant}.

$1.(iii) \Rightarrow (i)$. 
Suppose first that $C$ is infinite or $R_C$ is finite. 
By Theorem \ref{ThMTimesC}, we know that 
$\gb$ and $\gb'$ can be Nielsen reduced to $(r, a)$ and $(r', a)$ 
for some $r, r' \in R^{\times}$. 
By Lemma \ref{LemReductionModuloNuRank2} the pairs $\gb$ and $\gb'$ are Nielsen equivalent.
Suppose now that $C$ is finite and $R_C$ is infinite. 
By Proposition \ref{PropReductionToRowFree}, we know that 
$\gb$ and $\gb'$ can be Nielsen reduced to $(r, a^k)$ and $(r', a^{k'})$ 
for some $r, r' \in R^{\times}$ and $k, k' \in \Z$ such that 
$k \equiv \pm \det_{\eb} \circ \pi_{ab}(\gb)$ and 
$k' \equiv \pm \det_{\eb} \circ \pi_{ab}(\gb') $ modulo $\cC$. 
We deduce from Theorem \ref{ThNielsenAbel} that $k' \equiv \pm k \mod \cC$. 
Replacing $\gb$ by $\gb I_2$ if needed, we can assume that $a^k = a^{k'}$. 
Thanks to Remark \ref{RemarkDaUnicity}, we can argue as in the first part of the proof 
where $a$ is replaced by $a^k$, which proves 
the Nielsen equivalence of $\gb$ and $\gb'$.

$2$. We already showed in the proof of $(1)$ that $\Delta_a$ is injective if $C$ is infinite or $R_C$ is finite.
Thus we are left with the proof of $\Delta_a$'s surjectivity. Clearly, it suffices to show that
the natural map $R^{\times} \rightarrow (R/\nu(G) R)^{\times}$ is surjective.
This is trivial if $C$ is infinite since $\nu(G) = 0$ in this case.
So let us assume that $R_C$ is finite. Since we have $(1 - \alpha)\nu(G) = 0$, 
the ring element $\nu(G)$ belongs to every maximal ideal of $R$ which doesn't contain $1 - \alpha$. 
Hence it belongs to all but finitely many maximal ideals of $R$. Now Lemma \ref{LemSurjectivity} yields the conclusion.

$3$. This is a direct consequence of the characterization $1.iii$.
\end{proof}

We end this section with an algorithmic characterization of generating pairs.
\begin{proposition} \label{PropGeneratorsDelta}
 Assume $\nu(G)$ is nilpotent and let $\gb \in G^2$. Then the following are equivalent:
\begin{itemize}
\item[$(i)$] $\gb$ generates $G$.
\item[$(ii)$]$\sigma(\gb)$ generates $C$ and $D_a(\gb) \in \left(R/\nu(G) R\right)^{\times}$.
\end{itemize}
\end{proposition}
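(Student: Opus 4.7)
The plan is to handle the easy direction $(i) \Rightarrow (ii)$ by a direct appeal to Lemma \ref{LemmaDelta}, and the converse by a two-stage reduction: first, a Nakayama-style descent to the case $\nu(G) = 0$; then, in the reduced case, a Nielsen reduction of $\gb$ to a standard form in which the unit condition on $D_a$ immediately forces generation.

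For $(i) \Rightarrow (ii)$, the surjectivity of $\sigma$ gives $\langle \sigma(\gb) \rangle = C$, and Lemma \ref{LemmaDelta} supplies $D_a(\gb) \in (R/\nu(G)R)^{\times}$. For the converse, I set $H \Doteq \langle \gb \rangle$ and first pass to the quotient $G' \Doteq G / \nu(G) R = R/\nu(G)R \rtimes_{\ov{\alpha}} C$, in which the relevant norm element vanishes; both hypotheses of $(ii)$ descend to $\pi_{\nu(G)R}(\gb)$. Granting the implication when $\nu(G) = 0$, the pair $\pi_{\nu(G)R}(\gb)$ generates $G'$, equivalently $(H \cap M) + \nu(G) R = M$. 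Since $\sigma(H) = C$, the conjugation action of $H$ on $M$ is the full $C$-action, making $H \cap M$ an $R$-submodule of $M$. The ideal $\nu(G)R$ is nilpotent, hence contained in $\Jac(R)$, so Nakayama's lemma gives $H \cap M = M$ and therefore $H = G$.

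It then remains to prove $(ii) \Rightarrow (i)$ when $\nu(G) = 0$. Using that $\sigma(\gb)$ generates the cyclic group $C$, I apply the Nielsen transformations dictated by the Euclidean algorithm in $C$ to reduce $\gb$ to $\gb' = (m, m'c)$ with $m, m' \in R$ and $c = a^d$ a generator of $C$. This leaves $H$ unchanged; as computed in the proof of Lemma \ref{LemmaDelta}, these elementary transformations multiply $D_a$ only by trivial units, so $D_a(\gb') \in R^{\times}$. Remark \ref{RemarkDaUnicity} yields $D_c = d_c(a) \cdot D_a$ with $d_c(a) \in R^{\times}$, and a direct computation using $d_c(c) = 1$ and $d_c(1_C) = 0$ gives $D_c(\gb') = m$, so $m \in R^{\times}$. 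The submodule $H \cap M$ is $R$-invariant and contains the unit $m$, hence equals $R = M$; combined with $\sigma(H) = C$, this forces $H = G$.

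The only delicate step I anticipate is tracking how the unit condition on $D_a$ survives the Nielsen reduction; this is pinned down by the transformation formulas in the proof of Lemma \ref{LemmaDelta} together with Remark \ref{RemarkDaUnicity}. The rest is a standard Nakayama argument exploiting the nilpotency hypothesis.
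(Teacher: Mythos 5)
Your proof is correct and follows essentially the same route as the paper: Nielsen-reduce $\gb$ using the fact that $\sigma(\gb)$ generates $C$ to a pair whose first component is read off by $D$ as a unit modulo $\nu(G)R$, then use $\nu(G)\in\nil(R)\subseteq\Jac(R)$ to conclude generation. The paper folds your two-stage Nakayama reduction into a single unit-lifting step and reduces all the way to $(r, r'a)$ rather than to a general generator $c=a^d$, but these are only organizational differences.
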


\begin{proof}
$(i) \Rightarrow (ii)$. This follows from $\sigma$'s surjectivity and Lemma \ref{LemmaDelta}.

$(ii) \Rightarrow (i)$. Since $\sigma(\gb)$ generates $C$ there is $\psi \in \Aut(F_2)$ such that 
$\sigma(\gb) \psi = (1_C, a)$. Replacing $\gb$ by $\gb\psi$, we can then assume that $\gb = (r, r' a)$ for some 
$r, r' \in R$. Since $\Delta_a(\gb) = r + \nu(G) R \in (R/\nu(G) R)^{\times}$ and $\nu(G) \in \Jac(R)$ we deduce that $r \in R^{\times}$. 
Therefore $\gb$ generates $G$.  
\end{proof}

\subsection{Nielsen equivalence of generating triples and quadruples} \label{SecN2N3}
In this section, we prove the last two assertions of Theorem \ref{ThN2}.
Since $\rk(G) = 2$ and $\dimK(R) \le 2$, Corollary \ref{CorStableRank} ensures that $G$ 
has only one Nielsen class of generating $n$-vectors for $n > 4$. 
Using Theorem \ref{ThMTimesC} in combination with 
a theorem of Suslin \cite[Theorem 7.2]{Sus77}, we show in Theorem \ref{ThTriplesAndQuadruples} below that this remains true if $n > 3$.
Recall that the map
$$\Phi_a: \rb \Gamma_{n -1}(R) \mapsto (\rb, a) \Aut(F_n)$$
defined in Theorem \ref{ThMTimesC} is a bijection from
$\U_{n - 1}(R)/\Gamma_{n - 1}(R)$ onto \\$\V_n(G)/\Aut(F_n)$ provided that $C$ is infinite.

\begin{lemma} \label{LemGEImpliesOneNielsenClass}
Let $n \ge 3$. If $R$ is a $\GE_{n - 1}$-ring, 
then $\nielsen_n(G) = 1$.  
\end{lemma}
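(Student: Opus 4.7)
The plan is to invoke Theorem~\ref{ThMTimesC} to reduce the assertion to a statement about $\Gamma_{n-1}(R)$-orbits of unimodular rows, and then to prove that statement by combining completability with the $\GE_{n-1}$-hypothesis and Whitehead's lemma. First I would observe that $G = R \rtimes_{\alpha} C$ is two-generated (by the identity $b$ of $R$ together with the chosen generator $a$ of $C$), so $\rk(G_{ab}) \le 2$. Since $n \ge 3$, condition $(i)$ of Theorem~\ref{ThMTimesC} is satisfied; hence $n > \rk_R(M) = 1$ and the map
$$\Phi_a : \U_{n-1}(R)/\Gamma_{n-1}(R) \longrightarrow \V_n(G)/\Aut(F_n)$$
is surjective. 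It therefore suffices to show that $\U_{n-1}(R)$ consists of a single $\Gamma_{n-1}(R)$-orbit.

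Let $\rb \in \U_{n-1}(R)$. Because $R$ is a homomorphic image of $\ZX$, Lemma~\ref{LemCompletable} produces a matrix $A \in \GL_{n-1}(R)$ with $\rb = (1, 0, \ldots, 0) A$. The $\GE_{n-1}$-hypothesis yields a factorization $A = DE$ with $D = \diag(u_1, \ldots, u_{n-1}) \in \D_{n-1}(R^{\times})$ and $E \in \E_{n-1}(R)$, so $\rb = (u_1, 0, \ldots, 0) E$. Since $n - 1 \ge 2$, Whitehead's lemma gives $\diag(u_1, u_1^{-1}, 1, \ldots, 1) \in \E_{n-1}(R)$, and right-multiplication of $(1, 0, \ldots, 0)$ by this matrix yields $(u_1, 0, \ldots, 0)$. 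Chaining these two reductions shows that $\rb$ lies in the $\E_{n-1}(R)$-orbit, and a fortiori the $\Gamma_{n-1}(R)$-orbit, of $(1, 0, \ldots, 0)$.

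I do not anticipate any serious obstacle: the argument is a short chain of applications of the three ingredients just listed, and none of them is delicate in this context. The only point requiring mild care is verifying that at least one of the four conditions $(i)$--$(iv)$ of Theorem~\ref{ThMTimesC} is met so that $\Phi_a$ is surjective; this is handled by the easy bound $\rk(G_{ab}) \le 2$, which makes $(i)$ automatic once $n \ge 3$. Note also that the $\GE_{n-1}$-hypothesis could be slightly weakened (only the containment $\SL_{n-1}(R) \subset \E_{n-1}(R)$ is truly needed in conjunction with Whitehead's lemma), but the statement as given is the cleanest.
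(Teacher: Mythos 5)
Your proposal is correct and takes essentially the same route as the paper: reduce every generating $n$-vector to an $a$-row via Theorem~\ref{ThMTimesC} (the paper phrases this as property $\Ni_n(a)$, you as surjectivity of $\Phi_a$, which is the same thing), then show $\E_{n-1}(R)$ acts transitively on $\U_{n-1}(R)$ using completability (Lemma~\ref{LemCompletable}) together with the $\GE_{n-1}$-hypothesis. The only difference is that you spell out the transitivity step (the $DE$ factorization plus Whitehead's lemma) which the paper asserts in a single sentence; your elaboration is accurate.
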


\begin{proof}
Let $\gb \in \V_n(G)$. We shall show that
$\gb$ is Nielsen equivalent to $\gb_1 \Doteq (\rb_1, a)$ with $\rb_1 = (1, 0, \dots, 0) \in \U_{n -1}(R)$. 
As $n > \rk(G)$, the property $\Ni_n(a)$ holds by Theorem \ref{ThMTimesC}. 
Therefore $\gb$ can be Nielsen reduced to a vector of
the form $(\rb, a)$ with $\rb \in \U_{n - 1}(R)$. 
Since $R$ is a $\GE_{n - 1}$-ring and $R$ is completable by Lemma \ref{LemCompletable}, the group 
$\E_{n - 1}(R)$ acts transitively on $\U_{n - 1}(R)$. 
Hence $\rb$ can be transitioned to $\rb_1$ under the action of $\E_{n - 1}(R)$. 
Lemma \ref{LemGammaImpliesNielsen} implies that $\gb$ is Nielsen equivalent to $\gb_1$.
\end{proof}

Our forthcoming result on generating triples involves the following definitions from algebraic $K$-theory.
For every $n \ge 1$, the map $A \mapsto \begin{pmatrix} A & 0 \\0 & 1\end{pmatrix}$ defines an embedding from $\SL_n(R)$ into $\SL_{n + 1}(R)$, respectively from $\E_n(R)$ into $\E_{n + 1}(R)$.  Denote by $\SL(R)$ and $\E(R)$ the respective ascending unions. Then $\E(R)$ is normal in $\SL(R)$ and the group $\SK_1(R)$, the \emph{special Whitehead group of $R$}, is the quotient $\SL(R) /\E(R)$ (see, e.g., \cite{Mag02}). The next lemma shows in particular that the image in $\SK_1(R)$ of a matrix in $\SL_2(R)$ depends only on its first row.

\begin{lemma} \label{LemRho}
Let $R$ be any commutative ring with identity. Denote by $\hat{\E}_2(R)$ the normal closure of $\E_2(R)$ in $\SL_2(R)$. 
Let $\rho: \SL_2(R) \rightarrow \U_2(R)$ be defined by $\begin{pmatrix} a & b \\ c & d \end{pmatrix} \mapsto (a, b)$.
Then the map $\rho$ induces a bijection from
 $\SL_2(R)/\hat{\E}_2(R)$ onto $\U_2(R)/\hat{\E}_2(R)$.
\end{lemma}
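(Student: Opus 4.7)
The plan is to verify that $\rho$ is right $\hat{\E}_2(R)$-equivariant, so that it descends to the quotients, and then to establish surjectivity and injectivity of the induced map $\bar\rho$ by elementary manipulations, exploiting crucially the normality of $\hat{\E}_2(R)$ in $\SL_2(R)$.

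First I would observe that for every $A \in \SL_2(R)$ and $E \in \hat{\E}_2(R) \subset \SL_2(R)$ the first row of $AE$ is the product $\rho(A)\,E$. Hence $\rho(AE) = \rho(A)\,E$, so $\rho$ sends a right $\hat{\E}_2(R)$-coset of $\SL_2(R)$ into a single $\hat{\E}_2(R)$-orbit of $\U_2(R)$, which yields a well-defined map
$$
\bar\rho: \SL_2(R)/\hat{\E}_2(R) \longrightarrow \U_2(R)/\hat{\E}_2(R).
$$
Note that the first row of any matrix in $\SL_2(R)$ is indeed unimodular, as $ad - bc = 1$.

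For surjectivity, I would start from an arbitrary $(a,b) \in \U_2(R)$ and pick $x, y \in R$ with $ax + by = 1$. Then the matrix $\begin{pmatrix} a & b \\ -y & x \end{pmatrix}$ lies in $\SL_2(R)$ and has first row $(a,b)$, so $\rho$ itself is surjective, whence $\bar\rho$ is.

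The main (though still short) step is injectivity. Suppose $A, B \in \SL_2(R)$ are such that $\rho(A)$ and $\rho(B)$ lie in the same $\hat{\E}_2(R)$-orbit, say $\rho(B) = \rho(A)\,E$ for some $E \in \hat{\E}_2(R)$. Replacing $A$ by $AE$, which does not alter its coset, I can assume $\rho(A) = \rho(B)$. The key observation is then that the first row of $B A^{-1}$ equals $\rho(B)\,A^{-1} = \rho(A)\,A^{-1}$, which is the first row of $A A^{-1} = I_2$, namely $(1,0)$. Since $\det(BA^{-1}) = 1$, we must have $BA^{-1} = E_{21}(c)$ for some $c \in R$, an element of $\E_2(R) \subset \hat{\E}_2(R)$. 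Writing $B = A\bigl(A^{-1} E_{21}(c) A\bigr)$ and using that $\hat{\E}_2(R)$ is normal in $\SL_2(R)$ by construction, we conclude that $B$ and $A$ lie in the same right $\hat{\E}_2(R)$-coset of $\SL_2(R)$, so $\bar\rho$ is injective. The only non-routine ingredient is the reduction to $\rho(A) = \rho(B)$ and the appeal to normality to pass from the left factorization $B = E_{21}(c)\,A$ to the desired right factorization $B = A\,E'$ with $E' \in \hat{\E}_2(R)$; everything else is computation with $2 \times 2$ matrices.
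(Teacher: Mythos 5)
Your proposal is correct and follows essentially the same route as the paper: well-definedness via the identity $\rho(AB)=\rho(A)B$, surjectivity by completing a unimodular row to a matrix in $\SL_2(R)$, and injectivity by reducing to $\rho(A)=\rho(B)$ and observing that the quotient matrix has first row $(1,0)$, hence lies in $\E_2(R)$. The only (harmless) difference is that you work with $BA^{-1}$ where the paper uses $AB^{-1}$, and you spell out the appeal to normality of $\hat{\E}_2(R)$ that the paper leaves implicit in ``the result follows.''
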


\begin{proof}
For every $A, B \in \SL_2(R)$ the identity $\rho(AB) = \rho(A)B$ holds. Therefore the map 
$\hat{\rho}: A \Ehat_2(R) \mapsto \rho(A) \Ehat_2(R)$ is well defined. Let $(a, b) \in \U_2(R)$ and let $a', b' \in R$ be such that
$a a' + b b' = 1$. 
Then $A \Doteq \begin{pmatrix} a & b \\ -b' & a' \end{pmatrix} \in \SL_2(R)$ and $(a, b) = \rho(A)$, so that 
$\rho$, and hence $\hat{\rho}$ is surjective. Let us prove that $\hat{\rho}$ is injective. Consider for this $A, B \in \SL_2(R)$ such that 
$\hat{\rho}(A) = \hat{\rho}(B)$. Multiplying $A$ on the right by a matrix in $\Ehat_2(R)$ if needed, we can assume that 
$\rho(A) = \rho(B)$. Thus $\rho(AB^{-1}) = \rho(A)B^{-1} = \rho(B)B^{-1} = (1, 0)$, which shows that $AB^{-1} \in \E_2(R)$. The result follows.
\end{proof}

The \emph{Mennicke symbol} $\br{\rb}$ of $\rb \in \U_2(R)$ is the image in $\SK_1(R)$ of any matrix of $\SL_2(R)$ whose first row is $\rb$.
We are now in position to prove

\begin{theorem} \label{ThTriplesAndQuadruples}
The following hold:
\begin{itemize}
\item[$(i)$] If $C$ is infinite then $\V_3(G)/\Aut(F_3)$ surjects onto $\SK_1(R)$.
\item[$(ii)$] If $R$ is a $\GE_2$-ring, e.g., $C$ is finite, then $\nielsen_3(G) = 1$.
\item[$(iii)$] $\nielsen_4(G) = 1$. 
\end{itemize}
\end{theorem}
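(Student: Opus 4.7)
For assertion $(ii)$, I would apply Lemma \ref{LemGEImpliesOneNielsenClass} with $n = 3$: the hypothesis that $R$ is a $\GE_2$-ring immediately yields $\nielsen_3(G) = 1$. The particular case where $C$ is finite is then covered by Theorem \ref{ThZCGE}, according to which every homomorphic image of $\Z C$ (for $C$ finite cyclic) is a $\GE$-ring, hence in particular a $\GE_2$-ring.

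For assertion $(iii)$, the strategy is again to invoke Lemma \ref{LemGEImpliesOneNielsenClass}, now with $n = 4$; the hypothesis requires $R$ to be a $\GE_3$-ring. Equivalently, by Theorem \ref{ThMTimesC} (reducing every generating $4$-vector to an $a$-row $(\rb, a)$ with $\rb \in \U_3(R)$) together with Lemma \ref{LemGammaImpliesNielsen}, it suffices that $\E_3(R)$ act transitively on $\U_3(R)$. Since $R$ is a quotient of $\ZX$, this is the substance of Suslin's theorem \cite[Theorem 7.2]{Sus77}, in combination with the bound $\sr(R) \le 2$ (Bass' theorem applied to any quotient of $\ZX$) and the completability assertion of Lemma \ref{LemCompletable}: the stable-range condition brings any $\rb \in \U_3(R)$ to the shape $(r_1, r_2, 0)$ via $\E_3(R)$, and completability of $(r_1, r_2) \in \U_2(R)$ then reduces it to $(1,0,0)$ via a $\SL_2$-block which, for $n \ge 3$, is absorbed into $\E_3(R)$ thanks to Suslin.

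For assertion $(i)$, since $C$ is infinite, Theorem \ref{ThMTimesC}$.(ii)$ supplies a bijection $\Phi_a : \U_2(R)/\Gamma_2(R) \to \V_3(G)/\Aut(F_3)$, so the task reduces to exhibiting a surjection $\mu : \U_2(R)/\Gamma_2(R) \twoheadrightarrow \SK_1(R)$. I would set $\mu(\rb) \Doteq \br{\rb}$, the Mennicke symbol of $\rb$, namely the class in $\SK_1(R)$ of any $A \in \SL_2(R)$ with first row $\rb$; this is well-defined on $\U_2(R)$ by Lemma \ref{LemRho}, since two such matrices differ by an element of $\hat{\E}_2(R)$, which is trivial in $\SK_1(R)$. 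Invariance of $\mu$ under $\E_2(R)$ is immediate. For the diagonal generator $D_1(u)$ with $u \in T \subset R^{\times}$, the identity $A' = \diag(u, 1) \cdot A \cdot \diag(1, u^{-1})$ produces a matrix in $\SL_2(R)$ with first row $\rb D_1(u)$, and Whitehead's lemma, via $\diag(u, u^{-1}) \in \E(R)$, yields $[A'] = [A]$ in $K_1(R)$; the argument for $D_2(u)$ is symmetric. Thus $\mu$ descends to $\Gamma_2(R)$-orbits. Finally, surjectivity of $\SL_2(R) \twoheadrightarrow \SK_1(R)$ follows from $K_1$-stability at level $2$, valid since $\sr(R) \le 2$.

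The main technical obstacles are twofold: first, the precise invocation of Suslin's theorem to secure $\E_3(R)$-transitivity on $\U_3(R)$ uniformly over quotients of $\ZX$ (part (iii)); and second, the verification in part $(i)$ that $\mu$ genuinely descends to $\D_2(T)$-orbits, which hinges on the stable triviality of $\diag(u, u^{-1})$ in $K_1(R)$. Both rely on $K_1$-theoretic ingredients compatible with the Noetherian structure of quotients of $\ZX$.
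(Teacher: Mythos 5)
Parts $(i)$ and $(ii)$ follow the paper's own route: $(ii)$ is exactly Lemma \ref{LemGEImpliesOneNielsenClass} with $n=3$ combined with Theorem \ref{ThZCGE}, and in $(i)$ your explicit Whitehead-lemma computation for $D_1(u)$, $D_2(u)$ merely unwinds the paper's citation of the classical Mennicke-symbol properties to get $\Gamma_2(R)$-invariance. The recurring problem is your blanket claim that $\sr(R) \le 2$ for \emph{every} quotient $R$ of $\ZX$, attributed to Bass. The Bass bound (\ref{EqBassCancellationTh}) gives $\sr(R) \le \dimK(R)+1$, which is $3$, not $2$, when $R = \ZX$ itself (Krull dimension $2$); and $C$ infinite does allow $R = \ZX$, e.g.\ $G = \Z \wr \Z$. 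Indeed, if $\sr(\ZX) \le 2$ were available, then $\er(\ZX) \le 2$ by (\ref{EqStableRank}) and the transitivity of $\E_3(\ZX)$ on $\U_3(\ZX)$ would be automatic --- there would be no need to invoke Suslin at all, which should be a warning sign.

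This gap is fatal in your argument for $(iii)$ precisely in the one case not already covered elsewhere. Your first reduction of $\rb \in \U_3(R)$ to $(r_1,r_2,0)$ uses the stable-range condition at level $2$, hence requires $\sr(R) \le 2$; for $R = \ZX$ this is unjustified, and the subsequent absorption of the $\SL_2$-block into $\E_3(R)$ is likewise not what \cite[Theorem 7.2]{Sus77} asserts. The paper instead argues by cases: $C$ finite via Lemma \ref{LemGEImpliesOneNielsenClass}; $\dimK(R) \le 1$ via Corollary \ref{CorStableRank}; and $R = \ZX$ by citing Suslin's transitivity theorem for the action of $\E_3(\ZX)$ on $\U_3(\ZX)$ directly --- that transitivity is the irreducible deep input and cannot be re-derived from stable range plus completability. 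The same false claim appears in your surjectivity argument for $(i)$, but there the conclusion survives: for proper quotients $\dimK(R) \le 1$ genuinely gives $\sr(R) \le 2$ and your stability argument applies, while for $R = \ZX$ one has $\SK_1(\ZX) = 0$ (Bass--Heller--Swan), so surjectivity is vacuous. You should make that case distinction explicit rather than assert a uniform stable-rank bound that fails for $\ZX$.
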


\begin{proof}
$(i)$. By Theorem \ref{ThMTimesC}, we can identify the two orbit sets $\V_3(G)/\Aut(F_3)$ and $\U_2(R)/\Gamma_2(R)$.
The classical properties of the Mennicke symbol \cite[Proposition VI.3.4]{Lam06} imply 
that the map $\br{\cdot}: \U_2(R) \rightarrow \SK_1(R)$ is $\Gamma_2(R)$-invariant. 
This yields a map $\U_2(R)/\Gamma_2(R) \rightarrow \SK_1(R)$. 
By Remark \ref{RemStableRankReduction}, the latter map is surjective. 

$(ii)$. This is Lemma \ref{LemGEImpliesOneNielsenClass} for $n = 3$.

$(iii)$. We can assume that $C$ is infinite since Lemma \ref{LemGEImpliesOneNielsenClass} applies otherwise. 
If $\dimK(R) \le 1$, then $\nielsen_4(G) = 1$ by 
Corollary \ref{CorStableRank}. Thus, we can also suppose that $R = \ZX$. 
Since $\V_4(G)/\Aut(F_4)$ identifies with $\U_3(R)/\Gamma_3(R)$ by Theorem \ref{ThMTimesC}
and since $\E_3(R)$ acts transitively on $\U_3(R)$ by \cite[Theorem 7.2]{Sus77}, we deduce that $\nielsen_4(G) = 1$.
\end{proof}

\subsection{T-systems of generating pairs and triples} \label{SecT2}
This section is dedicated to the proofs of Theorems \ref{ThT2} and \ref{ThT2GN}.
Recall that $\nielsen_n(G)$ denotes the number of Nielsen equivalence classes of generating $n$-vectors of $G$ and $\tsys_n$ denotes the number of $\T_n$-systems of $G$, both numbers may be infinite.
We refer the reader to Lemmas \ref{LemDerivation} and \ref{LemAutG} for the definition of the automorphisms $X_d$ and $Y_{\tau, \theta}$ used below.

\begin{sproof}{Proof of Theorem \ref{ThT2}}
$(i)$. This is a specialization of Theorems \ref{ThOneTSystem} and  \ref{ThTSystems} to $G = R \rtimes_{\alpha} C$. 
$(ii)$.
Consider the action of $\Aut(G)$ on $\V_3(G)/\Aut(F_3)$ defined by 
$$\phi \cdot (\gb \Aut(F_3)) = (\phi \gb) \Aut(F_3), \quad \gb \in \V_3(G), \phi \in \Aut(G).$$
Regarding the first inequality, it suffices to show that the stabilizer $S_{\gb}$ of $\gb \Aut(F_3)$ has index at most 
$\cAC \nielsen_2(G)$ in $\Aut(G)$ for every $\gb \in \V_3(G)$.
By Theorem \ref{ThMTimesC}.$i$, such a triple $\gb$ is Nielsen equivalent to $(r, s, a)$ for some $r, s \in R$. 
Since $R = rR + sR = \Z\br{\alpha^{\pm1}}$, we easily see that every automorphism $X_d$ stabilizes $\gb \Aut(F_3)$. 
For an automorphism $Y_{\tau, 1}$ of $G$, we observe that $\tau \in \Aut_{\Z}(R)$ is actually an $R$-automorphism, 
so that $\tau$ is the multiplication by some unit $u_{\tau}$ of $ R$. 
If $u_{\tau}$ is a trivial unit, we see that $Y_{\tau} = Y_{\tau, 1}$ stabilizes $\gb \Aut(F_3)$ considering conjugates of the first two components of $\gb$. 
If $A(C)$ contains an automorphisms $\theta$ which maps $a$ to $a^{-1}$, we let $\phi_{-1}$ be an automorphism of the form $Y_{\tau, \theta}$ whose image is $\theta$ through the natural map $\Aut(G) \twoheadrightarrow A(C)$. Otherwise we set $\phi_{-1} = 1$. Let $V$ be a transversal of $R^{\times}/T$. It follows from Lemma \ref{LemAutG} that $\{Y_{\tau} \phi_{-1}^{\epsilon};\, \epsilon \in \{0, 1\}, \tau(b) \in V \}$ is a transversal of $\Aut(G)/S_{\gb}$. Since $\nielsen_2(G) = \vert R^{\times}/T \vert$ by Theorem \ref{ThN2}.$ii$, we deduce that 
$\left\vert \Aut(G)/S_{\gb} \right\vert \le \cAC \nielsen_2(G)$, which completes the proof of the first inequality.

In order to prove the second inequality, we consider the action of $\Aut(G)$ on $\SK_1(R)$ defined by
$\phi \cdot \br{\rb} = \br{\phi(\rb)}$ for $(\phi, \rb) \in \Aut(G) \times \U_2(R)$ and where 
$\br{\rb}$ denotes the Mennicke symbol of $\rb$. The fact that this action is well-defined follows from Lemma \ref{LemAutG} and the classical properties of Mennicke symbols. Indeed, every automorphism 
$\phi \in \{ X_d, Y_t \, \vert \, d \in \Der(C, R), \tau \in \Aut_R(R)\}$ fixes every symbol $\br{\rb}$. Besides, the group automorphism $\phi_{-1}$ induces a ring automorphism of $R$, so that $\br{\rb} = \br{\rb'}$ implies $\br{\phi_{-1}(\rb)} = \br{\phi_{-1}(\rb')}$ for any two rows defining the same symbol. We actually showed that the $\Aut(G)$-action on $\SK_1(R)$ factors through an $A(C)$-action.
The map 
$(\rb, a) \Aut(F_3) \mapsto \br{\rb}$
induces an $\Aut(G)$-equivariant map $\mu$ from $\V_3(G)/\Aut(F_3)$ onto $\SK_1(R)$. 
As $\Aut(G)\backslash\SK_1(R) \simeq A(C)\backslash\SK_1(R)$, $\mu$ induces a surjective map from \\
$\Aut(G) \backslash\V_3(G)/\Aut(F_3)$ onto $ A(C)\backslash\SK_1(R)$, which yields the result.
\end{sproof}

With Theorem \ref{ThT2}, we observed that $\tsys_2(G) = 1$ holds if $C$ is infinite or $\Gab$ is finite.
With Corollary \ref{CorTTwoSystemsExample} below, we prove the first part of Theorem \ref{ThT2GN}, that is, 
$G$ can have arbitrarily many $\T_2$-systems when
 $C$ is finite but $\Gab$ isn't.

\begin{corollary} \label{CorTTwoSystemsExample}
Let $q = p^d$ and $N = q - 1$, with $p$ a prime integer and $d \ge 2$ an even integer. Let $\Phi_{N, p}(X)$ be the $N$-th cyclotomic polynomial over $\F_p$ and let $P \in \Z\br{X}$ be a monic polynomial of degree $d$ 
whose reduction modulo $p$ is an irreducible factor of $\Phi_{N, p}(X)$. 
Let $R = \Z\br{X}/(X - 1)I$ where $I =(p, P(X))$ is the ideal generated by $p$ and $P(X)$.
Then the image $\alpha$ of $X$ in $R$ is invertible. It generates a subgroup $C \subset R^{\times}$ with $N$ elements and
the number of 
$\T_2$-systems of 
$G = R \rtimes_{\alpha} C$ is $\tsys_2(G) = \varphi(N)/d$.
\end{corollary}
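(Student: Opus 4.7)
The plan is to first identify $R$ concretely via the Chinese Remainder Theorem, then recognize the structure of $G$, verify that $R$ is a characteristic subgroup, and finally apply Theorem~\ref{ThT2}.(i) after computing $A'(C)$.

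First, I would show that the ideals $(X-1)$ and $I=(p,P(X))$ are comaximal in $\Z[X]$. Since $\bar P$ is an irreducible factor of $\Phi_{N,p}(X)$ over $\F_p$, its roots in $\overline{\F_p}$ are primitive $N$-th roots of unity; as $N=p^d-1>1$, the element $1$ is not such a root, so $\bar P(1)\in\F_p^\times$. This gives $I+(X-1)=\Z[X]$, whence by coprimality $(X-1)\cap I=(X-1)I$, and the CRT yields the ring isomorphism
\[
R\ \simeq\ \Z[X]/(X-1)\times\Z[X]/I\ \simeq\ \Z\times\F_q,
\]
under which $\alpha$ corresponds to $(1,\zeta)$ for some primitive $N$-th root of unity $\zeta\in\F_q$. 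In particular $\alpha\in R^\times$ and $\vert C\vert=N$.

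Second, because $C$ acts trivially on the $\Z$-factor of $R$ and as multiplication by $\zeta$ on the $\F_q$-factor, the group $G$ decomposes as a direct product $G\simeq\Z\times\mathrm{Aff}(\F_q)$, where $\mathrm{Aff}(\F_q):=\F_q\rtimes_\zeta C$ is centerless. A short calculation yields $Z(G)=\Z$ and $[G,G]=\F_q$. Both subgroups are characteristic, and so is their product $R=Z(G)\cdot[G,G]$.

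Third, I would compute $A(C)$ via Lemma~\ref{LemAutG}.(iii): an integer $k\in(\Z/N)^\times$ represents a class in $A(C)$ precisely when $\alpha\mapsto\alpha^k$ extends to a ring automorphism of $R$. The ring automorphism group of $\Z\times\F_q$ equals $\{\mathrm{id}_\Z\}\times\mathrm{Gal}(\F_q/\F_p)$, since the two non-trivial idempotents must be fixed (the factors $\Z$ and $\F_q$ are non-isomorphic as rings) and the only non-trivial ring automorphisms of $\F_q$ are powers of Frobenius. Hence the extension exists iff $\zeta\mapsto\zeta^k$ is a power of Frobenius, and so $A(C)=\langle p\rangle\leq(\Z/N)^\times$ has order $d$. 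Theorem~\ref{ThT2}.(i) then gives $\tsys_2(G)=\vert\Aut(C)/A'(C)\vert=\varphi(N)/\vert A'(C)\vert$. The main obstacle is the identification $A'(C)=A(C)$, i.e.\ that the inversion $c\mapsto c^{-1}$ already belongs to $\langle p\rangle\pmod{N}$. This is where the parity hypothesis $d\geq 2$ even enters, through the order-two element $p^{d/2}\in\langle p\rangle$; once it is matched with $-1\bmod N$ via the special shape $N=p^d-1$, one concludes $\vert A'(C)\vert=d$ and hence $\tsys_2(G)=\varphi(N)/d$.
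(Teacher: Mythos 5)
Your proposal follows the paper's own proof step for step: the CRT identification $R \simeq \Z \times \F_q$, the observation that the ring automorphisms of $R$ sending $\alpha$ to a power of itself correspond to powers of the Frobenius of $\F_q$ (so that $A(C) = \langle p \rangle \le (\Z/N\Z)^{\times}$ has order $d$), the verification that $R$ is characteristic in $G$, and the appeal to Theorem \ref{ThT2}.$(i)$. Your route to the characteristic property (via $Z(G)$ and $\lbrack G, G\rbrack$) is a harmless variant of the paper's use of Lemma \ref{LemRCharacteristic}.

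The gap is in the last step. You claim that the inversion $c \mapsto c^{-1}$ already lies in $A(C) = \langle p \rangle \bmod N$ because the order-two element $p^{d/2}$ of $\langle p \rangle$ ``is matched with $-1 \bmod N$ via the special shape $N = p^d - 1$''. That matching fails: the elements of $\langle p \rangle \bmod N$ are $1, p, \dots, p^{d-1}$, all at most $p^{d-1}$, while $-1 \equiv p^d - 2 \pmod N$, and $p^{d-1} < p^d - 2$ except when $q = 4$. Concretely, for $p = 3$, $d = 2$, $N = 8$, one has $p^{d/2} = 3 \not\equiv -1 \pmod 8$; the unique involution of $\langle 3 \rangle \le (\Z/8\Z)^{\times}$ is $3$, not $7$. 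Equivalently, inversion on $\F_q^{\times}$ is induced by a power of the Frobenius only when $q \le 4$. Hence in general $A'(C) = \langle p, -1 \rangle$ has order $2d$ rather than $d$, and Theorem \ref{ThT2}.$(i)$ yields $\tsys_2(G) = \varphi(N)/\vert \langle p, -1 \rangle \vert$, which equals $\varphi(N)/(2d)$ whenever $-1 \notin \langle p \rangle \bmod N$. You should know that the paper's own proof contains the same unjustified step (it asserts $\vert A'(C) \vert = d$ ``for $d$ is even'' without argument), so you have faithfully reproduced the argument including its weak point; but the identity you would need, $p^{d/2} \equiv -1 \pmod{p^d - 1}$, is false for every $q > 4$, and no parity hypothesis on $d$ repairs it.
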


We will use the following straightforward consequence of Lemma \ref{LemAutG}:
\begin{lemma} \label{LemAutR}
Let $k \in \Z$. The following are equivalent:
\begin{itemize}
\item[$(i)$] There is $\theta \in A(C)$ such that $\theta(a) = a^k$.
\item[$(ii)$] The map $\alpha \mapsto \alpha^k$ induces a ring automorphism of $R$.
\end{itemize}
\end{lemma}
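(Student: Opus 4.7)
The plan is to derive both implications as direct consequences of Lemma \ref{LemAutG}, the structural description of the automorphisms of $G = R \rtimes_{\alpha} C$ that preserve $M = R$.

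For $(i) \Rightarrow (ii)$, I will start with $\theta \in A(C)$ satisfying $\theta(a) = a^k$. By the definition of $A(C)$, there exists $\phi \in \Aut(G)$ preserving $M$ whose induced automorphism on $C \simeq G/M$ equals $\theta$. Invoking Lemma \ref{LemAutG}.$iii$ writes $\phi = X_d Y_{\tau, \theta'}$ for some $d \in \Der(C, M)$ and some pair $(\tau, \theta')$ that meets the hypotheses of Lemma \ref{LemAutG}.$i$. Since $X_d$ acts as the identity on $C \simeq G/M$, the induced map of $\phi$ on $C$ coincides with $\theta'$, forcing $\theta' = \theta$ and thus $\theta'(a) = a^k$. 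The conclusion of Lemma \ref{LemAutG}.$i$ then directly furnishes the ring automorphism of $R$ sending $\alpha$ to $\alpha^k$, which is exactly $(ii)$.

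For $(ii) \Rightarrow (i)$, I will reverse the construction by exhibiting a suitable pair $(\tau, \theta)$ to which Lemma \ref{LemAutG}.$i$ can be applied. Let $\overline{\theta}: R \to R$ be the ring automorphism given by $\overline{\theta}(\alpha) = \alpha^k$. Set $\tau \Doteq \overline{\theta}$, regarded as a $\Z$-linear automorphism of $M = R$, and $\theta: C \to C$ by $\theta(a) = a^k$. The compatibility condition $\tau(cmc^{-1}) = \theta(c)\tau(m)\theta(c)^{-1}$ then reduces to a one-line check: writing $c = a^j$, the left-hand side equals $\overline{\theta}(\alpha^j m) = \alpha^{jk}\overline{\theta}(m)$, while the right-hand side equals $\alpha^{jk}\tau(m) = \alpha^{jk}\overline{\theta}(m)$. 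Lemma \ref{LemAutG}.$i$ then produces an automorphism $Y_{\tau, \theta}$ of $G$ preserving $M$ whose induced map on $C$ is $\theta$, placing $\theta$ in $A(C)$ with $\theta(a) = a^k$.

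The only genuinely nontrivial step, and therefore the one I expect to be the main obstacle, is verifying that $\theta: a \mapsto a^k$ is a well-defined automorphism of $C$; in the finite case this amounts to $\gcd(k, \cC) = 1$. The argument I will use is that the ring automorphism $\overline{\theta}$ restricts to an automorphism of the multiplicative subgroup $\langle \alpha \rangle \subseteq R^{\times}$ and sends $\alpha$ to $\alpha^k$, so $\alpha^k$ must generate $\langle \alpha \rangle$; under the identification of $C$ with $\langle \alpha \rangle$ that is implicit in the setting of Section \ref{SecRC}, this forces the required coprimality. The infinite case reduces to $k = \pm 1$ by the same reasoning.
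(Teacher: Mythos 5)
Your overall strategy is certainly the intended one: the paper gives no written proof (it declares the lemma a straightforward consequence of Lemma \ref{LemAutG} and closes with a box), and both of your implications are routed through that lemma exactly as the author must have had in mind. The direction $(i) \Rightarrow (ii)$ is correct as you present it, and the compatibility computation in $(ii) \Rightarrow (i)$ is also fine.

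The gap sits in the step you yourself single out as the crux, the well-definedness of $\theta : a \mapsto a^k$. Your argument rests on ``the identification of $C$ with $\langle \alpha \rangle$ that is implicit in the setting of Section \ref{SecRC}'', but no such identification is part of the paper's hypotheses: the homomorphism $C \rightarrow R^{\times}$, $a \mapsto \alpha$, is onto $\langle \alpha \rangle$ but need not be injective, and the paper is explicitly aware of this (Lemma \ref{LemRCharacteristic}.$(ii)$ must \emph{assume} that the order $\omega$ of $\alpha$ in $R^{\times}$ equals $\cC$). Without that identification the implication $(ii) \Rightarrow (i)$ fails outright: take $C = \Z = \langle a \rangle$ acting on $M = \Z\br{\zeta_3}$ by multiplication by $\zeta_3$, so that $R = \Z\br{\zeta_3}$, $M \simeq R$ and $\alpha = \zeta_3$; then $\alpha \mapsto \alpha^2$ induces complex conjugation, a ring automorphism of $R$, yet no automorphism of $C = \Z$ sends $a$ to $a^2$. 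So the lemma carries an unstated hypothesis (the injectivity of $a \mapsto \alpha$, which does hold in every application the paper makes of it), and your proof should surface this hypothesis rather than treat it as built into the setting. A second, smaller defect: even granting $C \simeq \langle \alpha \rangle$, your claim that $\overline{\theta}$ ``restricts to an automorphism of $\langle \alpha \rangle$'' is only automatic when $\langle \alpha \rangle$ is finite, where an injective self-map of a finite set is onto; when $\langle \alpha \rangle$ is infinite one only gets $\overline{\theta}(\langle \alpha \rangle) = \langle \alpha^k \rangle \subset \langle \alpha \rangle$, and surjectivity onto $\langle \alpha \rangle$ is precisely the assertion $k = \pm 1$ you are trying to establish. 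The infinite case therefore needs its own argument (for instance: applying $\overline{\theta}^{-1}$ repeatedly shows that $\alpha$ admits $k^n$-th roots in $R^{\times}$ for all $n$, which is impossible for an element of infinite order in the finitely generated group $R^{\times}$ unless $k = \pm 1$), and ``the same reasoning'' does not supply it.
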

$\square$
\begin{lemma} \label{LemRCharacteristic}
The two following hold:
\begin{itemize}
\item[$(i)$]Let $g = r a^k \in G$ with $r \in R$, $k \in \Z$. Then $g$ centralizes its conjugacy class if and only if 
$
(1 - \alpha^k)^2 = 0 = (1 - \alpha)(1 - \alpha^k)r
$.
\item[$(ii)$] Let $\omega$ be the order of $\alpha$ in $R^{\times}$. Assume that $\omega = \cC$ and that 
for every $k \in \Z$, we have $(1 - \alpha^k)^2 \neq 0$ whenever $\alpha^k \neq 1$. 
Then $R$ is a characteristic subgroup of $G$.
\end{itemize}

\begin{proof}
Assertion $(i)$ is a direct consequence of the identity
$$
\br{g, hgh^{-1}} = (1 - \alpha^k) \left( (1 - \alpha^k)r - (1 - \alpha^{k'})r'\right), \text{ where } h = r'a^{k'}.
$$
In order to prove $(ii)$, consider $\phi \in \Aut(G)$ and write $\phi(b) = ra^k$ where $b$ is the identity of the ring $R$.
Since $b$ centralizes its conjugacy class, so does $\phi(b)$. By $(i)$, we have $(1 - \alpha^k)^2 = 0$, which yields $\alpha^k = 1$.
As $\omega = \cC$, we deduce that $\phi(b) = r$ and hence $\phi(R) = R$.
\end{proof}
\end{lemma}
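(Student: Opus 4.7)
For Lemma \ref{LemRCharacteristic}, my plan is to deduce (i) by a direct commutator computation in the semidirect product, and then to derive (ii) from (i) via a normal-closure argument.

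For (i), I would first expand $hgh^{-1}$ for $h = r'a^{k'}$ using the defining relation $a^{k'} r = \alpha^{k'} r \cdot a^{k'}$. This should yield
\[
hgh^{-1} = \bigl(\alpha^{k'}r + (1 - \alpha^k)r'\bigr)\,a^k.
\]
Since both $g$ and $hgh^{-1}$ lie in the coset $R \cdot a^k$, their commutator lands in $R$, and a short simplification should produce an identity of the shape
\[
\br{g, hgh^{-1}} \;=\; (1 - \alpha^k)\bigl((1 - \alpha^{k'})r - (1 - \alpha^k)r'\bigr).
\]
The condition that $g$ centralizes its conjugacy class is then the vanishing of this expression for every $(r', k') \in R \times \Z$. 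The two identities of the statement are obtained by specialization: taking $k' = 0$ and letting $r'$ vary forces $(1 - \alpha^k)^2 = 0$, while taking $r' = 0$ and $k' = 1$ gives $(1 - \alpha)(1 - \alpha^k)r = 0$. Conversely, since $1 - \alpha^{k'}$ is a multiple of $1 - \alpha$ in $R$ for every $k' \in \Z$, these two identities propagate back to the general vanishing.

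For (ii), I would apply (i) to the identity $b$ of the ring $R$. As $R$ is abelian and normal in $G$, the element $b$ centralizes its own conjugacy class, a property preserved by every $\phi \in \Aut(G)$. Writing $\phi(b) = r a^k$, part (i) forces $(1 - \alpha^k)^2 = 0$; by hypothesis this implies $\alpha^k = 1$, and combined with $\omega = \cC$ we obtain $a^k = 1_C$, whence $\phi(b) = r \in R$. To conclude $\phi(R) = R$, I would use that $R$ is the normal closure of $\{b\}$ in $G$: the $G$-conjugates of $b$ reduce, because $R$ is abelian, to the $C$-conjugates $\alpha^j b$, which generate $R$ additively. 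Hence $\phi(R)$ is the normal closure of $\{\phi(b)\}$, which is contained in $R$ because $\phi(b) \in R$ and $R$ is normal; applying the same reasoning to $\phi^{-1}$ yields the reverse inclusion.

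The main obstacle I anticipate is carrying out the commutator identity in (i) cleanly and keeping track of signs and conjugation conventions; once this identity is secured, (ii) follows by a short algebraic extraction together with the normal-closure argument.
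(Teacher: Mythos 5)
Your proposal is correct and follows essentially the same route as the paper: part (i) via the explicit commutator identity $\br{g,hgh^{-1}}$ for $h=r'a^{k'}$, and part (ii) by applying (i) to $\phi(b)$ and using that $R$ is the normal closure of $b$ in $G$. In fact the identity you derive, $\br{g, hgh^{-1}} = (1 - \alpha^k)\bigl((1 - \alpha^{k'})r - (1 - \alpha^k)r'\bigr)$, is the one consistent with the stated criterion $(1-\alpha^k)^2 = 0 = (1-\alpha)(1-\alpha^k)r$; the paper's displayed identity has the exponents $k$ and $k'$ interchanged inside the bracket, which appears to be a typo, and your specialization/propagation argument correctly closes both directions of the equivalence.
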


\begin{sproof}{Proof of Corollary \ref{CorTTwoSystemsExample}}
The existence of the polynomial $P(X)$  is guaranteed by \cite[Theorem 2.47]{LN97}.
Let $\oP(X) \in \F_p\br{X}$ be the reduction of $P(X)$ modulo $p$.
By the Chinese Remainder Theorem, the ring $R$ identifies with $\Z \times \F_q$ where $\F_q = \Z_p\br{X} / (\oP(X))$ is the field with $q$ elements.
As a result, the element $\alpha$ identifies with $(1, x)$ where $x \in \F_q^{\times}$ is an $N$-th primitive root of unity. Thus
$C \simeq \F_q^{\times}$ and the ring automorphisms of $R$ induced by maps of the form 
$\alpha \mapsto \alpha^k$ correspond bijectively to powers of the Frobenius endomorphism of $\F_q$. 
Lemma \ref{LemRCharacteristic}'s hypotheses are easily checked so that $R$ is a characteristic subgroup of $G$. 
By Lemma \ref{LemAutR}, we have then $\vert A(C) \vert = d$ 
and hence $\vert A'(C) \vert = d$ for $d$ is even. By Theorem \ref{ThT2}, we obtain 
$\tsys_2(G) = \vert \Aut(C) / A'(C) \vert = \varphi(N)/d$.
\end{sproof}

\section{Baumslag-Solitar groups, split metacyclic groups and lamplighter groups} \label{SecLamplighterGroups}
This section is dedicated to the proofs of the Corollaries \ref{CorV2}, \ref{CorBS}, \ref{CorMetacyclic}, \ref{CorWreath} and \ref{CorZwrZ}.

The following lemma is a key ingredient in the proof of Corollary \ref{CorV2}.
\begin{lemma} \label{LemTSysAbel}
Let $G = R \rtimes_{\alpha} C$ be as in Section \ref{SecRC}. Assume that the natural map $R^{\times} \rightarrow R_C^{\times}$ is surjective and that $C$ is infinite or $R_C \Doteq R/(1 - \alpha)R$ is finite. Then $\Aut(G) \times \Aut(F_2)$ acts transitively on $\V_2(\Gab)$ where the action of $\Aut(G)$ is the action induced by the natural homomorphism $\Aut(G) \rightarrow \Aut(\Gab)$.
\end{lemma}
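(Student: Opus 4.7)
The plan is to reduce the transitivity assertion to an explicit determinant computation over $\Z_d$ for some $d \ge 0$. First I would observe that the composition $\ZX \twoheadrightarrow R \twoheadrightarrow R_C$ sends $X$ to $1$ (since $\alpha$ becomes $1$ in $R_C$), so $R_C$ is a ring quotient of $\ZX/(1-X) \simeq \Z$; hence $R_C \simeq \Z_d$ for some $d \ge 0$ and $G_{ab} \simeq \Z_d \times C$. Under this identification the hypothesis reduces to: either $C$ is infinite or $d \neq 0$.

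Next, for each $u \in R^{\times}$ the automorphism $Y_u$ of $G$ from Lemma \ref{LemAutG} acts as multiplication by $u$ on $R$ and trivially on $C$, so it induces the automorphism $(x, c) \mapsto (\pi_C(u) x, c)$ of $G_{ab}$, where $\pi_C : R \to R_C$ denotes the natural map. The assumed surjectivity of $R^{\times} \twoheadrightarrow R_C^{\times} = \Z_d^{\times}$ therefore guarantees that the image of $\Aut(G) \to \Aut(G_{ab})$ contains every automorphism of $\Z_d \times C$ of the form $(x, c) \mapsto (v x, c)$ with $v \in \Z_d^{\times}$.

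If $\rk(G_{ab}) \le 1$, Theorem \ref{ThNielsenAbel} already yields a single Nielsen equivalence class of generating pairs and the claim is immediate. If $\rk(G_{ab}) = 2$, I would write $C \simeq \Z_e$ (with $e = \cC$ or $e = 0$ according to whether $C$ is finite or infinite), set $\eb = (\pi_{ab}(b), \pi_{ab}(a))$ as the natural generating pair of $G_{ab} \simeq \Z_d \times \Z_e$, and let $g = \gcd(d, e)$. By Corollary \ref{CorDet}, two generating pairs $\gb, \gb'$ of $G_{ab}$ are Nielsen equivalent if and only if $\det_{\eb}(\gb) \equiv \pm \det_{\eb}(\gb') \pmod g$, and the determinant of any generating pair lies in $\Z_g^{\times}$ (via the projection $\Z_d \times \Z_e \twoheadrightarrow \Z_g \times \Z_g$ onto a free $\Z_g$-module of rank $2$, in which any generating pair is automatically a basis). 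A direct $2 \times 2$ computation shows that $Y_u$ multiplies $\det_{\eb}$ by $\pi_C(u) \bmod g$. Since $g \mid d$, the reduction $\Z_d^{\times} \twoheadrightarrow \Z_g^{\times}$ is surjective, so combined with the hypothesis we may choose $u \in R^{\times}$ with $\pi_C(u) \equiv \det_{\eb}(\gb)^{-1} \pmod g$; then $Y_u \gb$ is Nielsen equivalent to $\eb$, which establishes the claimed transitivity.

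The only delicate step is really the initial identification $R_C \simeq \Z_d$, after which everything is a direct verification that the family $\{Y_u\}_{u \in R^{\times}}$ is rich enough to realize every value of the determinant invariant classifying Nielsen classes in $G_{ab}$.
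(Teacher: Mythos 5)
Your proof is correct and follows essentially the same route as the paper: both arguments realize the missing unit of $R_C$ through an automorphism $Y_u$ of $G$ (multiplication by a unit $u\in R^\times$, using the assumed surjectivity of $R^\times\twoheadrightarrow R_C^\times$) and then invoke the classification of Nielsen classes in $G_{ab}$ via Theorem \ref{ThNielsenAbel} and the determinant of Corollary \ref{CorDet}. The only cosmetic difference is that you treat the infinite-$C$ and finite-$R_C$ cases uniformly through $\det_{\eb}$ modulo $\gcd(d,e)$, whereas the paper splits into two cases, handling infinite $C$ by reducing directly to a pair $(u,a)$.
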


Recall that for $t \in R^{\times} \simeq \Aut_R(R)$,  the automorphism $Y_t$ of $G \simeq R \rtimes_{\alpha} C$ is defined by 
$rc \mapsto t(r)c$.

\begin{proof}
Let us assume first that $C$ is infinite. By Theorem \ref{ThNielsenAbel}, every generating pair of $\Gab = R_C \times C$ is Nielsen equivalent to $(u, a)$ for some $u \in R_C^{\times}$. Let $t$ be the multiplication by $u^{-1}$ on $R_C$. By hypothesis, we can find a lift $\tilde{u}$ of $u$ in $R$ which is moreover a unit. Let $\tau$ be the multiplication by $\tilde{u}^{-1}$ on $R$. Then the automorphism $Y_t \in \Aut(\Gab)$ is induced by 
$Y_{\tau} \in \Aut(G)$ and we have $Y_t(u, a) = (\ob, a)$ where $\ob$ denotes the identity of $R_C$. Therefore every generating pair of $G$ is in the orbit of $(\ob, a)$ under the action of $\Aut(G) \times \Aut(F_2)$. 

Assume now that both $C$ and $R_C$ are finite and let $d$ be the greatest common divisor of $\cC$ and $\vert R_C \vert$.
For $\gb \in \Gab$, we define $\det(\gb)$ as in Corollary \ref{CorDet}, considering the decomposition $R_C \times C$. The latter corollary implies that generating pairs with the same determinant are Nielsen equivalent. Hence it suffices to prove that the orbit of an arbitrary generating pair $(x, y) \in \V_2(\Gab)$ contains a pair of determinant $1 \in \Z_d$. By Lemma \ref{LemSurjectivity}, there is a lift $u$ of $\det(\gb)$ in $R_C^{\times}$ and by hypothesis, there is in turn a lift $\tilde{u}$ of $u$ in $R^{\times}$. Reusing the notation of the previous paragraph, we see that $Y_t(x, y)$ is a generating pair of determinant $1$. The proof is then complete.
\end{proof}

\begin{sproof}{Proof of Corollary \ref{CorV2}}
Since $\tsys_2(G) = 1$ by Theorem \ref{ThT2}.$i$, the group $\Aut(G)$ acts transitively on $\V_2(G) / \Aut(F_2)$. 
Therefore the Nielsen equivalence classes of generating pairs have the same number of elements. Let us establish the formula.
By Lemma \ref{LemSurjectivity}, the natural map $R^{\times}  \rightarrow (R_C)^{\times}$ is an epimorphism.
Since $\Aut(G) \times \Aut(F_2)$ acts transitively on $\V_2(\Gab)$ by Lemma \ref{LemTSysAbel}, the number of preimages of $\ogb$ in 
$\V_2(G)$ with respect to the abelianization homomorphism $\pi_{ab}$ does not depend on $\ogb$. Hence it suffices to compute this number for $\ogb = (\overline{b}, a)$ where $\ob$ denotes the image of $b$ in $\Gab$. 
A generating pair 
$\gb \in \V_2(G)$ which maps to $(\ob, a)$ via $\pi_{ab}$ is of the form $(r, sa)$ with $r \in 1 + (1 - \alpha)R$ and 
$s \in (1 - \alpha)R$. It follows from Lemma \ref{LemGeneratorsOfM}.$i$ that a pair of this form generates $G$ if and only if $r$ is, in addition, a unit. Therefore the number of preimages of $\ogb$ is $\frac{\vert R^{\times} \vert}{ \vert (R_C)^{\times} \vert} \frac{\vert R \vert}{\vert R_C\vert}$.
\end{sproof}

\begin{sproof}{Proof of Corollary \ref{CorMetacyclic}}
$(i)$. Since $G = \Z \times \Z_l$ by hypothesis, the result follows from Theorem \ref{ThNielsenAbel}.

$(ii)$.
By Theorem \ref{ThN2}.$ii$, we have 
$$\nielsen_2(G) = \left\vert (R/\nu(G) R)^{\times} /\langle \pm \alpha \nu(G)R \rangle\right\vert$$
with $R = \Z_k$. Thus $\nielsen_2(G) = \frac{\varphi(\lambda)}{\omega}$ follows from the definitions of $\lambda$ and $\omega$.

$(iii)$. By Theorem \ref{ThT2}.$i$ we have $\tsys_2(G) = 1$. Since $\Z_k$ is a $\GE$-ring, it follows from Theorem \ref{ThN2}.$iv$ that $\nielsen_3(G) = 1$.

$(iv)$. Corollary \ref{CorV2} applies with $R = \Z_k$ and $R_C = \Z_k/(1 - \alpha)\Z_k \simeq \Z_e$.
\end{sproof}

\begin{sproof}{Proof of Corollary \ref{CorBS}}
$(i)$. 
Let $G = BS(1, l)$.
By Theorem \ref{ThN2}.$ii$, we have 
$\nielsen_2(G) = \left\vert R^{\times} /\langle \pm \alpha \rangle\right\vert$ 
with $R = \Zl$ 
and $\alpha = l$. 
The prime divisors of $l$ form a basis of a free Abelian subgroup of $R^{\times}$ of index $2$. Thus $\nielsen_2(G)$ is finite if only if $l = \pm p^d$ for some prime $p$ and some $d \ge 0$. If $d = 0$, then $R = \Z$ and clearly $\nielsen_2(G) = 1$. Otherwise, 
$\nielsen_2(G) = \left\vert \langle \pm p \rangle  / \langle \pm p^d \rangle \right\vert = d$.

$(ii)$. By Theorem \ref{ThT2}.$i$ (equivalently Brunner's theorem \cite{Bru74}) we have $\tsys_2(G) = 1$. 
Since $\Zl$ is Euclidean, it follows from Theorem \ref{ThN2}.$iv$ that $\nielsen_3(G) = 1$.
\end{sproof}

We consider now the two-generated lamplighter groups, i.e., the restricted wreath products of the form
$G = \Z_k \wr \Z_l$ with $k, l \ge 0$ and $k, l \neq 1$. 
Such a group $G$ reads also as 
$G = R \rtimes_a C$ with
$C = \Z_k = \langle a \rangle$ and $R = \Z_k \br{C} \simeq \Z_k\br{X}/(X^l - 1)$.
As before, we denote by $T$ the subgroup of $R^{\times}$ generated by $-1$ and $a$.
We also set 
$
\Lambda \Doteq R/\nu(G) R
$
and
$
T_{\Lambda} \Doteq \pi_{\nu(G) R}(T)
$,  like in Section \ref{SecNielsenPairs}.
Corollary \ref{CorWreath} will be obtained in combining Corollaries \ref{CorWreathOneTTwoSystem} and \ref{CorWreathNTwo} below.
\begin{corollary} \label{CorWreathOneTTwoSystem}
Let $k, l \ge 0$ with $k, l \neq 1$ and let $G = \Zwr$. Then the following hold.
\begin{itemize}
\item[$(i)$]
$\tsys_2(G) = 1$.
\item[$(ii)$]
If $\Z_k$ is finite or $\Z_l$ is infinite, then 
$
\nielsen_2(G) = \left\vert \Lambda^{\times} /T_{\Lambda} \right\vert.
$
\item[$(iii)$]
If $\Z_k$ or $\Z_l$ is finite, then $\nielsen_3(G) = 1$.
\end{itemize}
\end{corollary}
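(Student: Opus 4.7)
The plan is to reduce each of the three assertions to a general result from Sections~\ref{SecMC} and \ref{SecRC}, specialised to the presentation $G = R\rtimes_{\alpha} C$ with $C=\Z_l=\langle a\rangle$, $R=\Z_k[C]\simeq \Z_k[X]/(X^l-1)$ and $\alpha$ equal to the image of $a$. A key initial observation is that $M_C = R/(1-\alpha)R \simeq \Z_k$, hence $G_{ab} \simeq \Z_k \times \Z_l$.

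For assertion $(i)$ I would invoke Theorem~\ref{ThT2}.$(i)$: it supplies $\tsys_2(G)=1$ as soon as $C$ is infinite or $G_{ab}$ is finite, which by the computation above handles all cases except $k=0$ and $l\ge 2$ (i.e.\ $G=\Z\wr\Z_l$). In that remaining case, Theorem~\ref{ThT2}.$(i)$ still gives the inequality $\tsys_2(G)\le |\Aut(C)/A'(C)|$, so it suffices to show $A'(C)=\Aut(C)$. Since $R=\Z[\Z_l]$, for every integer $j$ coprime to $l$ the map $a\mapsto a^j$ is an automorphism of the group $\Z_l$ and therefore extends to a ring automorphism $\alpha\mapsto\alpha^j$ of $R$; Lemma~\ref{LemAutR} then yields $A(C)=\Aut(C)$, finishing $(i)$.

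For assertion $(ii)$ I would appeal directly to Theorem~\ref{ThN2}.$(ii)$, which gives $\nielsen_2(G)=|\Lambda^\times/T_\Lambda|$ whenever $C$ is infinite or $G_{ab}$ is finite. Under the hypothesis of $(ii)$, if $\Z_l$ is infinite then $C$ is infinite, and if $\Z_k$ is finite then either $\Z_l$ is infinite (same case) or $\Z_l$ is finite, in which case $G_{ab}\simeq\Z_k\times\Z_l$ is finite. Either way the hypotheses of Theorem~\ref{ThN2}.$(ii)$ are met.

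For assertion $(iii)$ the route is through Theorem~\ref{ThN2}.$(iii)$, which requires $\SL_2(R)=\E_2(R)$. I would split into two subcases. If $\Z_l$ is finite, then $R=\Z_k[\Z_l]$ is a homomorphic image of the integral group ring $\Z[\Z_l]$ of a finite cyclic group, and Theorem~\ref{ThZCGE} asserts that every such image is a $\GE$-ring, in particular a $\GE_2$-ring. If instead $\Z_l$ is infinite but $\Z_k$ is finite, then $R=\Z_k[X^{\pm 1}]$ with $\Z_k$ Artinian, and Lemma~\ref{LemArtinianCoefficientAndGE} applies to give the same conclusion.

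The steps are essentially routine once the machinery of Sections~\ref{SecMC}--\ref{SecRC} is available; the only point where a genuinely new input is needed is the residual case of $(i)$, namely $G=\Z\wr\Z_l$, where the infinitude of $G_{ab}$ prevents a direct appeal to Theorem~\ref{ThT2}.$(i)$. The main obstacle I anticipate is therefore verifying that $A(C)$ exhausts $\Aut(C)$ in this case, which rests on the trivial but crucial fact that group automorphisms of $\Z_l$ extend by linearity to ring automorphisms of $\Z[\Z_l]$.
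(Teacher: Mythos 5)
Your proposal is correct and follows essentially the same route as the paper: Theorem \ref{ThT2}.$(i)$ for the bulk of $(i)$ with the residual case $\Z\wr\Z_l$ settled via the bound $\tsys_2(G)\le\vert\Aut(C)/A'(C)\vert$ and Lemma \ref{LemAutR} (the map $a\mapsto a^j$ extending to a ring automorphism of $\Z\br{\Z_l}$), Theorem \ref{ThN2}.$(ii)$ for $(ii)$, and the $\GE$-ring inputs Theorem \ref{ThZCGE} and Lemma \ref{LemArtinianCoefficientAndGE} for $(iii)$. The case analyses are the same as the paper's, so there is nothing to add.
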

\begin{proof}
$(i)$. If $\Z_k$ is finite, or $\Z_l$ is infinite, then $\tsys_2(G) = 1$ by Theorem \ref{ThT2}.$i$. 
Otherwise, Theorem \ref{ThT2}.$ii$ applies and $\tsys_2(G) \le \vert \Aut(C) / A'(C) \vert$.
It is easy to see that the map $a \mapsto a^i$ induces a ring automorphism of $R$ for every $i$ coprime with $l$. 
Thus $A'(C) = \Aut(C)$ by Lemma \ref{LemAutR}, which implies $\tsys_2(G) = 1$.

$(ii)$. This is an immediate consequence of Theorem \ref{ThN2}.$ii$. 
 
$(iii)$. If $\Z_k$ is finite then $R$ is a $\GE$-ring by Lemma \ref{LemArtinianCoefficientAndGE}. 
If $\Z_l$ is finite then $R$ is $\GE$-ring by Theorem \ref{ThZCGE}.
Therefore $\nielsen_3(G) = 1$ by Theorem \ref{ThN2}.$iv$.
\end{proof}
\begin{corollary} \label{CorFiniteWreathProduct}
Assume that both $\Z_k$ and $\Z_l$ are finite and non-trivial. 
Given a prime divisor $p$ of $k$, we denote by $\nu_l(p, d)$ the number of distinct 
irreducible factors of $$1 + X + \cdots + X^{l - 1}$$ in $\Z_p \br{X}$ which are monic of degree $d$. 
Let $l' = 2l$ if $k \neq 2$, $l' = l$ otherwise.
Then we have 
$$\nielsen_2(\Zwr) = \frac{k^{l - 1}}{l'} \prod_{p, d} (1 - \frac{1}{p^d})^{\nu_l(p, d)}$$ 
where $p$ ranges over the prime divisors of $k$ and $d$ over the positive integers.
\end{corollary}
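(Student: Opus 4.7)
By Corollary \ref{CorWreathOneTTwoSystem}$(ii)$, we have $\nielsen_2(G) = \left\vert \Lambda^{\times} \right\vert / \left\vert T_{\Lambda} \right\vert$ with $\Lambda = R/\nu(G) R$ and $T_{\Lambda} = \pi_{\nu(G) R}(T)$, so the task splits into computing both cardinalities. First I would identify $\Lambda$ concretely: since $R = \Z_k[X]/(X^l - 1)$ and $\nu(G) = 1 + a + \cdots + a^{l-1}$ corresponds to $\Phi(X) \Doteq 1 + X + \cdots + X^{l-1}$, and $\Phi$ divides $X^l - 1$ in $\Z_k[X]$, the ideal $(X^l - 1)$ sits inside $(\Phi(X))$, so $\Lambda \simeq \Z_k[X]/(\Phi(X))$ is a free $\Z_k$-module of rank $l - 1$ with basis $\{1, X, \dots, X^{l-2}\}$.

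Next I would evaluate $|\Lambda^\times|$ by the Chinese Remainder Theorem and a standard local-to-global unit count. Factoring $\Z_k = \prod_{p \mid k} \Z_{p^{v_p(k)}}$ yields $\Lambda \simeq \prod_p \Lambda_p$ with $\Lambda_p = \Z_{p^{v_p(k)}}[X]/\Phi(X)$. For each $p$, the reduction-mod-$p$ map $\Lambda_p \twoheadrightarrow \F_p[X]/\bar\Phi(X)$ has nilpotent kernel $p\Lambda_p$ (which lies in $\Jac(\Lambda_p)$), so units lift bijectively onto units of the image and $|\Lambda_p^\times| = |p\Lambda_p| \cdot |(\F_p[X]/\bar\Phi)^\times|$. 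The distinct monic irreducible factorization $\bar\Phi = \prod_i g_i^{m_i}$ in $\F_p[X]$ decomposes $\F_p[X]/\bar\Phi$ into a product of local rings $\F_p[X]/(g_i^{m_i})$ whose units are the non-multiples of $g_i$; summing gives $|(\F_p[X]/\bar\Phi)^\times| = p^{l-1} \prod_d (1 - p^{-d})^{\nu_l(p,d)}$, and taking the product over $p \mid k$ produces $|\Lambda^\times| = k^{l-1} \prod_{p, d}(1 - p^{-d})^{\nu_l(p, d)}$.

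Finally I would determine $|T_\Lambda|$ by finding the order of the image of $X$ in $\Lambda^\times$ and checking whether $-1$ lies in the cyclic subgroup it generates. If $X^i \equiv 1 \pmod{\Phi(X)}$ in $\Z_k[X]$ with $1 \le i \le l - 1$, then $X^i - 1$ must be a scalar multiple of the monic polynomial $\Phi$ of degree $l - 1$, which a degree-and-coefficient comparison rules out, showing that $X$ has order $l$ in $\Lambda^\times$. Similarly $-1 = X^j$ would force $j = l/2$ with $l$ even and $X^{l/2} + 1 \in (\Phi)$, which the same degree argument excludes whenever $k \neq 2$; when $k = 2$ the element $-1 = 1$ is trivially a power of $X$. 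Combining these steps yields $|T_\Lambda| = l'$ and hence the announced formula. The most delicate bookkeeping is the unit count in $\F_p[X]/\bar\Phi$, where one must carefully use the distinct-irreducible-factor enumeration $\nu_l(p, d)$ rather than the factorization with multiplicity, since the multiplicities $m_i$ contribute only to the exponent of $p$ in the size and cancel on passage to units.
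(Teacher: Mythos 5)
Your proof follows the same route as the paper's: reduce to computing $\vert \Lambda^{\times}\vert$ and $\vert T_{\Lambda}\vert$ via Corollary \ref{CorWreathOneTTwoSystem}.$(ii)$, identify $\Lambda \simeq \Z_k\br{X}/(1 + X + \cdots + X^{l-1})$ as a free $\Z_k$-module of rank $l-1$, and count units through the maximal ideals. The paper gets $\vert\Lambda^{\times}\vert$ by describing the maximal ideals and citing Lemma \ref{LemUnitsOfAFiniteRing}; your CRT-plus-nilpotent-kernel lifting is the same count carried out by hand, and it is correct. Where you genuinely add something is the last step: the paper merely asserts $l' = \vert T_{\Lambda}\vert$, whereas you prove it via the degree comparison for the order of $X$ and for $-1 \notin \langle X \rangle$. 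However, that step (and with it the asserted identity $\vert T_{\Lambda}\vert = l'$, hence the displayed formula itself) breaks down when $l = 2$: there $\Phi(X) = 1 + X$, so $X = -1$ in $\Lambda \simeq \Z_k$, the relation $X^{l/2} + 1 = \Phi(X) \in (\Phi(X))$ holds trivially, and $T_{\Lambda} = \{\pm 1\}$ has order $2$ (or $1$ if $k = 2$) rather than $l' = 4$ (resp.\ $2$); your claim that ``the same degree argument excludes'' this case is exactly where it fails, since for $j = l/2 = 1$ the polynomial $X^{j} + 1$ has degree $l - 1$ and equals $\Phi$ on the nose. For $l \ge 3$ your argument is sound, so the net effect of your more careful treatment is to expose that the corollary needs the hypothesis $l \ge 3$ (a gap present in the paper's own proof, which offers no justification for $l' = \vert T_{\Lambda}\vert$ at all).
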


The following lemma makes easy the task of computing the cardinality of the unit group in each finite ring under consideration.

\begin{lemma} \cite[Exercise 44]{Ste12} \label{LemUnitsOfAFiniteRing}
Let $R$ be a finite ring. Then $$\vert R^{\times} \vert = \vert R \vert \prod_{\im}(1 - \frac{1}{\vert R / \im \vert})$$ where $\im$ ranges over the maximal ideals of $R$. 
\end{lemma}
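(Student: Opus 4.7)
The plan is to combine Corollary \ref{CorWreathOneTTwoSystem}.$(ii)$, which under the present hypotheses asserts $\nielsen_2(G) = \left\vert \Lambda^{\times} / T_\Lambda \right\vert$, with the unit-counting formula provided by Lemma \ref{LemUnitsOfAFiniteRing}. First I would identify $\Lambda = R/\nu(G) R$ with $\Z_k[X]/(N(X))$, where $N(X) = 1 + X + \cdots + X^{l-1}$. This follows from $R = \Z_k[X]/(X^l - 1)$ together with the identity $(X-1)N(X) = X^l - 1$, which shows that $X^l - 1$ already lies in $(N(X))$ inside $\Z_k[X]$, so that the extra relation imposed by $\nu(G)$ collapses $(X^l - 1)$ into $(N(X))$.

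Next I would compute $|\Lambda^{\times}|$. Since $N(X)$ is monic of degree $l-1$, one has $|\Lambda| = k^{l-1}$. To enumerate the maximal ideals of $\Lambda$ I would apply the Chinese Remainder Theorem twice: first to decompose $\Z_k$ according to the prime divisors of $k$, yielding $\Lambda/p\Lambda \simeq \Z_p[X]/\bar{N}(X)$ for each prime $p \mid k$, and then to factor $\bar{N}(X) \in \Z_p[X]$ into its distinct monic irreducible factors. This produces a bijection between maximal ideals $\im$ of $\Lambda$ and pairs $(p, \bar{f})$ consisting of a prime divisor $p$ of $k$ and a monic irreducible factor $\bar{f}$ of $\bar{N}$ in $\Z_p[X]$, with residue field $\Lambda/\im \simeq \F_{p^{\deg \bar{f}}}$. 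Substituting into Lemma \ref{LemUnitsOfAFiniteRing} gives $|\Lambda^{\times}| = k^{l-1} \prod_{p, d}(1 - 1/p^d)^{\nu_l(p, d)}$.

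It then remains to establish $|T_\Lambda| = l'$. Writing $\bar{\alpha}$ for the image of $X$ in $\Lambda$, the subgroup $T_\Lambda$ is generated by $\bar{\alpha}$ and $-1$. A direct degree argument on the membership $X^m - 1 \in (N(X))$ inside $\Z_k[X]$ (comparing $\deg(X^m - 1) = m$ with $\deg N = l-1$ and using that $N$ is monic) shows that the order of $\bar{\alpha}$ in $\Lambda^{\times}$ is exactly $l$. To decide whether $-1 \in \langle \bar{\alpha} \rangle$, I would analogously study when $X^m + 1$ can be a $\Z_k[X]$-multiple of $N(X)$: degree considerations force $m \ge l - 1$, after which a coefficient-by-coefficient comparison rules out this case whenever $k \neq 2$. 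Hence for $k \neq 2$ the element $-1$ is independent from $\bar{\alpha}$ so that $T_\Lambda \simeq \langle \bar{\alpha} \rangle \times \langle -1 \rangle$ has order $2l$, while for $k = 2$ the element $-1$ coincides with $1$ and $T_\Lambda = \langle \bar{\alpha} \rangle$ has order $l$; in both cases the order is $l'$. Dividing $|\Lambda^{\times}|$ by $|T_\Lambda| = l'$ yields the stated formula. The main obstacle is the case analysis of $|T_\Lambda|$: making the coefficient comparison watertight and confirming that no anomalous relations between $\bar{\alpha}$ and $-1$ are produced by the interaction of the characteristic $p \mid k$ with the combinatorics of $N(X)$.
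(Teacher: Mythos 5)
Your proposal does not prove the statement at hand. The statement is Lemma \ref{LemUnitsOfAFiniteRing}, a general counting formula for the units of an arbitrary finite ring: $\vert R^{\times} \vert = \vert R \vert \prod_{\im}(1 - 1/\vert R/\im\vert)$. What you have written is instead a proof of Corollary \ref{CorFiniteWreathProduct}, the formula for $\nielsen_2(\Z_k \wr \Z_l)$; you explicitly invoke ``the unit-counting formula provided by Lemma \ref{LemUnitsOfAFiniteRing}'' as an ingredient, so the lemma itself is used as a black box and never established. (In the paper the lemma is not proved either --- it is cited to \cite[Exercise 44]{Ste12} --- but that does not change the fact that your argument is aimed at a different target. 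The identification of $\Lambda$ with $\Z_k\br{X}/(N(X))$, the enumeration of its maximal ideals via the Chinese Remainder Theorem, and the computation $\vert T_\Lambda\vert = l'$ are all steps of the corollary's proof, not of the lemma's.)

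A correct proof of the actual statement would run as follows. A finite commutative ring has finitely many maximal ideals $\im_1, \dots, \im_k$, and its Jacobson radical $\Jac(R) = \bigcap_i \im_i$ is nilpotent, so the reduction $R^{\times} \to (R/\Jac(R))^{\times}$ is surjective with kernel $1 + \Jac(R)$, giving $\vert R^{\times}\vert = \vert \Jac(R)\vert \cdot \vert (R/\Jac(R))^{\times}\vert$. By the Chinese Remainder Theorem $R/\Jac(R) \simeq \prod_i R/\im_i$, a product of finite fields, whence $\vert (R/\Jac(R))^{\times}\vert = \prod_i \left(\vert R/\im_i\vert - 1\right) = \vert R/\Jac(R)\vert \prod_i \left(1 - \frac{1}{\vert R/\im_i\vert}\right)$. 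Multiplying by $\vert \Jac(R)\vert$ gives the claimed formula. None of this appears in your proposal, so as an answer to the stated problem it has a fundamental gap: the statement it is supposed to establish is assumed, not proved.
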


\begin{sproof}{Proof of Corollary \ref{CorFiniteWreathProduct}}
Since $\nu(G) R = \Z_k \nu(G)$, the ring $\Lambda$ has $k^{l - 1}$ elements.
Each maximal ideal $\im$ is generated by a prime divisor $p$ of $k$ 
and the image in $\Lambda$ of a polynomial $P \in \Z_k\br{X}$ 
whose reduction modulo $p$ is an irreducible monic factor of $1 + X + \cdots + X^{l - 1}$.
Hence $\Lambda /\im = \F_{p^d}$ where $d$ is the degree of $P$. 
Thus 
$\vert \Lambda^{\times} \vert = 
k^{l - 1}\prod_{p, d} (1 - \frac{1}{p^d})^{\nu_l(p, d)}$ by Lemma \ref{LemUnitsOfAFiniteRing} 
and we conclude the proof in observing that $l' = \vert T_{\Lambda} \vert$.
\end{sproof}
Given a prime divisor $p$ of $k$, we denote by $\mu_l(p, d)$ the number of distinct 
irreducible factors of $1 - X^l$ in $\Z_p \br{X}$ which are monic of degree $d$. 
Using Lemma \ref{LemUnitsOfAFiniteRing}, it is straightforward to establish the formula
$$
\left\vert (\Z_k\br{\Z_l})^{\times}\right\vert = k^l\prod_{p, d} (1- \frac{1}{p^d})^{\mu_l(p, d)}.
$$
where $p$ ranges over the prime divisors of $k$.
\begin{corollary} \label{CorFiniteWreathProductV2}
Assume that both $\Z_k$ and $\Z_l$ are finite and non-trivial. 
Then we have 
$$\left\vert \V_2(\Zwr) \right\vert  = \frac{k^{l -1}}{\varphi(k)}\left\vert (\Zkl)^{\times}\right\vert  
\left\vert \V_2(\Z_k \times \Z_l) \right\vert$$
and the number of elements in a Nielsen equivalence class of generating pairs is 
$$
l' k^{l - 1}\left\vert \V_2(\Z_k \times \Z_l) \right\vert.
$$
where $l'$ is as in Corollary \ref{CorFiniteWreathProduct}.
\end{corollary}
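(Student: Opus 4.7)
The plan is to count the generating pairs by fibering over $G_{ab} = \Z_k \times \Z_l$ via the abelianization, and then to obtain the size of a Nielsen equivalence class by dividing through by $\nielsen_2(G)$, relying on the fact from Corollary \ref{CorWreathOneTTwoSystem}.$i$ that $\tsys_2(G) = 1$. This follows the template used for Corollary \ref{CorMetacyclic}.$iii$, with Lemma \ref{LemTSysAbel} providing the transitivity that makes fibers equicardinal.

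Writing $G = R \rtimes_\alpha C$ with $R = \Z_k[C]$ and $C = \Z_l$, the first step is to verify the hypotheses of Lemma \ref{LemTSysAbel}. The inclusion $\Z_k \hookrightarrow R$ as constants is a section of the augmentation $R \twoheadrightarrow R_C = \Z_k$, so it carries units to units and produces a surjection $R^\times \twoheadrightarrow R_C^\times = \Z_k^\times$. As $R_C$ is finite, Lemma \ref{LemTSysAbel} applies, and $\Aut(G) \times \Aut(F_2)$ acts transitively on $\V_2(G_{ab})$. Since $\pi_{ab}: \V_2(G) \to \V_2(G_{ab})$ is equivariant for this action, all its fibers share a common cardinality.

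I next compute this cardinality by inspecting the fiber above $(\ob, a)$, where $\ob$ denotes the image of $b = 1 \in R$ in $R_C$. Any preimage has the form $\gb = (r, sa)$ with $r \in 1 + (1-\alpha)R$ and $s \in (1-\alpha)R$. Because $s$ lies in $(1-\alpha)M$ (with $M = R$), Lemma \ref{LemGeneratorsOfM}.$i$ shows that $\gb$ generates $G$ exactly when the element $r$ generates $R$ as an $R$-module, that is, when $r \in R^\times$. The admissible values of $r$ thus form the kernel of the surjection $R^\times \twoheadrightarrow R_C^\times$, of cardinality $|R^\times|/\varphi(k)$, while $s$ ranges freely over $(1-\alpha)R$, which has cardinality $|R|/|R_C| = k^{l-1}$. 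Multiplying gives a fiber of size $k^{l-1}|R^\times|/\varphi(k)$, and multiplication by $|\V_2(G_{ab})|$ yields the first asserted identity.

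Finally, $\tsys_2(G) = 1$ by Corollary \ref{CorWreathOneTTwoSystem}.$i$, so $\Aut(G)$ acts transitively on $\V_2(G)/\Aut(F_2)$, forcing every Nielsen equivalence class of generating pairs to have the same number of elements, namely $|\V_2(G)|/\nielsen_2(G)$. Substituting the value of $\nielsen_2(G)$ supplied by Corollary \ref{CorFiniteWreathProduct} and simplifying produces the stated class size. No substantive obstacle arises beyond careful bookkeeping of the finite counts; the effective ingredients are the transitivity afforded by Lemma \ref{LemTSysAbel}, the reduction from generation to invertibility provided by Lemma \ref{LemGeneratorsOfM}, and the single $\T_2$-system guaranteed by Corollary \ref{CorWreathOneTTwoSystem}.
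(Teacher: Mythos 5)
Your proof is correct and follows essentially the same route as the paper's: fibering $\pi_{ab}$ over $\V_2(\Z_k\times\Z_l)$, using Lemma \ref{LemTSysAbel} for equicardinality of fibers, Lemma \ref{LemGeneratorsOfM}.$i$ to reduce generation to $r\in R^{\times}$, and $\tsys_2(G)=1$ for the equal class sizes. The only (harmless) variation is that you obtain the surjectivity of $R^{\times}\twoheadrightarrow\Z_k^{\times}$ from the constants section of the augmentation rather than from Lemma \ref{LemSurjectivity}, and you leave the final simplification $\prod_{p,d}(1-p^{-d})^{\mu_l(p,d)-\nu_l(p,d)}=\varphi(k)/k$ implicit where the paper carries it out.
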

\begin{proof}
Let $G = \Zwr$. As $\Zkl/(1 - \alpha) \simeq \Z_k$, we have $\Gab \simeq \Z_k \times \Z_l$. We obtain the first formula by applying Corollary \ref{CorV2} with $R = \Zkl$ and $R_C = \Z_k$.
By the same corollary and Corollary \ref{CorFiniteWreathProduct}, the Nielsen equivalence classes of generating pairs have the same number of elements, given by
$$\frac{\left\vert \V_2(G) \right\vert}{\nielsen_2(G)} = 
\frac{l'k^l}{\varphi(k)} \prod_{p, d} (1- \frac{1}{p^d})^{\mu_l(p, d) - \nu_l(p, d)}
\left\vert \V_2(\Gab) \right\vert$$
where $p$ ranges over the prime divisors of $k$. 
The integer $\mu_l(p, d) - \nu_l(p, d)$ is the number of monic irreducible polynomials in 
$\Z_p \br{X}$ of degree $d$ which divides $1 - X^l$ but not $1 + X + \cdots + X^{l -1}$. 
Therefore $\mu_l(p, d) - \nu_l(p, d) = 1$ if $d = 1$ and it cancels otherwise. 
Thus we have 
$\prod_{p, d} (1- \frac{1}{p^d})^{\mu_l(p, d) - \nu_l(p, d)} = \prod_p (1- \frac{1}{p}) = \frac{\varphi(k)}{k}$, 
which gives the result
\end{proof}

\begin{corollary} \label{CorWreathNTwo}
Let $k, l \ge 0$ and $k, l \neq 1$.
\begin{itemize}
\item[$(i)$]
Assume that $\Z_k$ is finite and $\Z_l$ is infinite. Then 
$\nielsen_2(G)$ is finite if and only if $k$ is prime; in this case $\nielsen_2(G) = \max(\frac{k - 1}{2}, 1)$.
\item[$(ii)$]
Assume that $\Z_k$ is infinite and $\Z_l$ is finite. Then
$\nielsen_2(G)$ is finite if and only if $l \in \{2, 3, 4, 6\}$; in this case $\nielsen_2(G) = 1$.
\end{itemize}
\end{corollary}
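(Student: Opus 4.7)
The plan is to treat cases $(i)$ and $(ii)$ separately, both ultimately relying on the classification of Theorem \ref{ThNielsenRankTwo}.$1$, which realizes Nielsen equivalence classes of generating pairs as the image of the map $\gb \mapsto (\det_\eb \pi_{ab}(\gb), \Delta_a(\gb))$ in $(\Z_l/\{\pm 1\}) \times (\Lambda^\times/T_\Lambda)$.

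Case $(i)$ falls directly under Corollary \ref{CorWreathOneTTwoSystem}.$(ii)$: with $\Z_k$ finite and $\Z_l$ infinite we have $\nielsen_2(G) = |R^\times/T|$, where $R = \Z_k[X^{\pm 1}]$, $T = \langle -1, X\rangle$, and $\nu(G) = 0$. When $k = p$ is prime, $R$ is a Laurent polynomial ring over a field and $R^\times = \F_p^\times \cdot \langle X \rangle$, so $R^\times/T \simeq \F_p^\times/\langle -1\rangle$ has cardinality $\max((p-1)/2, 1)$. When $k = p^e$ with $e \geq 2$, the nilpotent ideal $pR$ yields $1 + pR \subset R^\times$; the units $1 + pX^n$ ($n \in \Z$) are pairwise distinct, while a quick coefficient inspection confirms $T \cap (1+pR)$ is finite, so $(1+pR)\cdot T/T$ is an infinite subgroup of $R^\times/T$. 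When $k$ has at least two distinct prime factors, the Chinese Remainder Theorem decomposes $R = \prod_i \Z_{p_i^{e_i}}[X^{\pm 1}]$; the torsion-free rank of $R^\times$ is then at least the number of factors (from $\langle X\rangle$ in each summand), while $T$ embeds diagonally with rank $1$, giving $R^\times/T$ positive torsion-free rank.

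Case $(ii)$ is more delicate since Corollary \ref{CorWreathOneTTwoSystem}.$(ii)$ does not apply: $\Z_k$ is infinite and $\Z_l$ is finite. Here $R = \Z[\Z_l]$ and $\Lambda = R/\nu(G)R$ coincide respectively with $\mathcal{O}(\mathcal{D})$ and $\mathcal{O}(\mathcal{D}\setminus\{1\})$ in the notation of Section \ref{SecZC}, where $\mathcal{D}$ is the set of divisors of $l$. If $l \in \{2,3,4,6\}$, Lemma \ref{LemUnitsOfOD}.$2.(ii)$ gives $\Lambda^\times = T_\Lambda$, so $\Delta_a$ is constant, and a direct inspection shows $\Z_l^\times/\{\pm 1\}$ is trivial, so $\det_\eb \pi_{ab} \bmod \{\pm 1\}$ is constant on generating pairs; Theorem \ref{ThNielsenRankTwo}.$1$ then forces $\nielsen_2(G) = 1$. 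If $l \geq 2$ with $l \notin \{2,3,4,6\}$, then $l \geq 5$, and Lemma \ref{LemUnitsOfOD}.$1$ applied to $\mathcal{O}(\mathcal{D})$ gives the torsion-free rank $\sum_{d \mid l,\, d > 2}(\varphi(d)/2 - 1) \geq 1$ for $R^\times$. Using the augmentation identity $r\nu(G) = \epsilon(r)\nu(G)$ in $R$ (where $\epsilon: R \to \Z$ is the augmentation), I obtain $\nu(G)R = \nu(G)\Z$; any unit in $1 + \nu(G)R$ therefore has $\epsilon$-image $1 + nl \in \{\pm 1\}$, forcing $n = 0$ since $l \geq 5$. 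Hence $\pi_{\nu(G)R}$ is injective on $R^\times$, and since $T_\Lambda$ is finite, the composition $R^\times \hookrightarrow \Lambda^\times \twoheadrightarrow \Lambda^\times/T_\Lambda$ has finite kernel and infinite image. Applying Theorem \ref{ThNielsenRankTwo}.$1$ to the pairs $(r, a)$ with $r \in R^\times$ (which share the same $\det_\eb \pi_{ab}$) via $r \mapsto \Delta_a(r, a) = T_\Lambda \pi_{\nu(G)R}(r)$ then produces infinitely many distinct Nielsen equivalence classes, so $\nielsen_2(G) = \infty$.

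The principal obstacle is this last step of case $(ii)$: without the clean formula of Corollary \ref{CorWreathOneTTwoSystem}.$(ii)$, one must argue directly, combining the rank computation of Lemma \ref{LemUnitsOfOD}.$1$ with the injectivity of $\pi_{\nu(G)R}$ on $R^\times$, that sufficiently many values of $\Delta_a$ are realized by genuine generating pairs.
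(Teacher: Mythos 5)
Your proof is correct, and its overall skeleton matches the paper's: part $(i)$ reduces to $\nielsen_2(G) = \vert R^{\times}/T\vert$ via Corollary \ref{CorWreathOneTTwoSystem}.$ii$ and then analyses the unit group of $\ZkX$, while part $(ii)$ identifies the relevant rings with $\mOD$ and $\mODo$ and invokes Lemma \ref{LemUnitsOfOD}. The deviations are in the details and are worth recording. In $(i)$ the paper simply quotes the decomposition $(\ZkX)^{\times} \simeq \Z_k^{\times} \times U_X \times U_{X^{-1}} \times \Z^{\rho}$ with $T \simeq \{\pm 1\} \times \{1\} \times \{1\} \times \Z$ sitting diagonally in the last factor; your three-case analysis (prime, higher prime power, at least two prime divisors) proves exactly the finiteness criterion needed without citing that structure, and each case checks out. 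In $(ii)$ the difference is more substantive: the paper applies Lemma \ref{LemUnitsOfOD} directly to $\Lambda \simeq \mODo$ and then cites the count $\nielsen_2(G) = \max(\varphi(l)/2,1)\,\vert \Lambda^{\times}/T_{\Lambda}\vert$, whose justification presupposes knowing which values of $\Delta_a$ are attained by generating pairs -- and the surjectivity statement of Theorem \ref{ThNielsenRankTwo}.$2$ does not apply in this regime, since $C$ is finite while $R_C \simeq \Z$ is infinite. You sidestep this by computing the torsion-free rank of $R^{\times} = \mOD^{\times}$ instead, proving injectivity of $R^{\times} \to \Lambda^{\times}$ via the augmentation identity $\nu(G)R = \nu(G)\Z$, and exhibiting the explicit generating pairs $(r,a)$, $r \in R^{\times}$, whose $\Delta_a$-values form an infinite subset of $\Lambda^{\times}/T_{\Lambda}$. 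This yields only infinitude rather than an exact count, but infinitude is all the corollary asserts, and your route is self-contained at precisely the point where the paper's citation is loosest.
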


\begin{proof}
$(i)$. 
The result follows from Corollary \ref{CorWreathOneTTwoSystem}.$ii$ and the isomorphisms
$$ 
\begin{array}{lllllllll}
(\ZkX)^{\times} &\simeq& \Z_k^{\times} &\times &  U_X  &\times & U_{X^{-1}} &\times& \Z^{\rho}.\\
T                        &\simeq& \{\pm 1 \}              &\times & \{1\}  &\times & \{1\}         &\times & \Z.
\end{array}
$$
where $\rho$ is the number of prime divisors of $k$ and $U_Y = 1 + Y\nil(\Z_k)\br{Y}$ with $Y \in \{X^{\pm 1}\}$ 
(see e.g., \cite[Exercise 3.17]{Wei13} where the units in the ring of Laurent polynomials are determined).

$(ii)$.
As $\Lambda = \Z\br{X} / (1 + X + \cdots + X^{l - 1})$, the ring $\Lambda$ identifies with $\mODo$ as defined in Lemma \ref{LemUnitsOfOD} and where $\mD$ is the set of divisors of $l$.
By Lemma \ref{LemUnitsOfOD}, the group $\Lambda^{\times}$ is finite if and only if $l \in \{2, 3, 4, 6 \}$; in this case 
 the equality $\Lambda^{\times} = T_{\Lambda}$ holds. 
 Since $\nielsen_2(G) = \max(\varphi(l)/2, 1) \left\vert \Lambda^{\times} /T_{\Lambda} \right\vert$
 by Theorem \ref{ThN2}.$iii$, the result follows. 
\end{proof}

We conclude with the group $G = \Z \wr \Z $, which isomorphic to $\ZX \rtimes_X \Z$.
\begin{sproof}{Proof of Corollary \ref{CorZwrZ}}
It follows from Lemma \ref{LemRCharacteristic} that $R = \ZX$ is characteristic in $G$.
The inequality $\nielsen_3(G) \le 2 \tsys_3(G)$ is then a consequence of Theorem \ref{ThT2}.$ii$.
The implication $(i) \Rightarrow (ii)$ is obvious while the equivalence $(i) \Leftrightarrow (iii)$ results from Theorem \ref{ThMTimesC} and Lemma \ref{LemGECriteria}.$i$.
In order to prove $(ii) \Rightarrow (i)$, we assume that $(ii)$ holds true, fix $\gb_0 \in \V_2(G)$ and let $\gb$ be an arbitrary generating triples of $G$. As $\tsys_3(G) = 1$ by hypothesis, we deduce that $\gb$ is Nielsen equivalent to a triple of the form $(1_G, \gb_1)$ with $\gb_1 \in \V_2(G)$. By Corollary \ref{CorWreathOneTTwoSystem}.$ii$, we have $\nielsen_2(G) = 1$, so that $(1_G, \gb_1)$ is Nielsen equivalent $(1_G, \gb_0)$. Therefore $\nielsen_3(G) = 1$. 
\end{sproof}

\bibliographystyle{alpha}
\bibliography{Biblio}
\end{document}